\documentclass[hidelinks,onefignum,onetabnum]{siamart251216}

\usepackage[utf8]{inputenc}
\usepackage{amsmath,amsfonts,amssymb,mathtools,bm}
\usepackage{physics}
\usepackage{graphicx}
\usepackage{subcaption}
\graphicspath{{./}}
\usepackage{tikz}
\usetikzlibrary{arrows.meta,positioning,calc}
\usepackage{algorithm}
\usepackage{algpseudocode}

\newcommand{\RR}{\mathbb{R}}
\newcommand{\NN}{\mathbb{N}}
\newcommand{\transp}{^{\mathsf T}}
\newcommand{\GL}{\operatorname{GL}}

\makeatletter
\@ifundefined{theorem}{\newtheorem{theorem}{Theorem}[section]}{}
\@ifundefined{lemma}{\newtheorem{lemma}[theorem]{Lemma}}{}
\@ifundefined{proposition}{\newtheorem{proposition}[theorem]{Proposition}}{}
\@ifundefined{corollary}{\newtheorem{corollary}[theorem]{Corollary}}{}
\@ifundefined{definition}{}{}
\@ifundefined{assumption}{\newtheorem{assumption}[theorem]{Assumption}}{}
\@ifundefined{remark}{}{}
\makeatother

\title{Inexact Adjoint Gradients and Directional Tolerances for Full-Potential Airfoil Optimization}
\headers{Error Propagation for Inexact Adjoint Gradients}{H. G. Macedo and L. F. Bueno}
\author{Humberto Gimenes Macedo\thanks{Graduate researcher. Contact: \texttt{gimeneshumberto@outlook.com}.} \and Luís Felipe Bueno}

\ifpdf
\hypersetup{
  pdftitle={Error Propagation and Directional Tolerances for Inexact Adjoint Gradients in Simulation-Constrained Optimization},
  pdfauthor={Humberto Gimenes Macedo and Luís Felipe Bueno}
}
\fi

\begin{document}

\maketitle

\begin{abstract}
This paper develops a framework connecting discrete adjoint gradient-error analysis with an optimization method that uses directional error tolerances, and applies it to airfoil shape optimization governed by a conservative full-potential flow solver on body-fitted structured meshes. The theoretical part derives the reduced discrete adjoint formula for scalar objectives constrained by a state equation and analyzes how inexact state and adjoint residuals propagate into the reduced gradient. For residuals that are affine in the state variable, the gradient error is bounded by a linear combination of the state and adjoint residual tolerances. On compact sets of decision variables, a uniform version of this bound is obtained, leading to a directional tolerance condition under which the inexact gradient satisfies an exact descent inequality. The resulting inexact general directions method inherits convergence properties under uniformly bounded, diminishing, and Armijo-type step-size rules. The computational part combines a parabolic initial grid generator, an elliptic mesh smoother, and a full-potential discretization with artificial-density stabilization and approximate-factorization iteration. The optimization problem is formulated as a pressure-matching problem in which a class-shape-transformation airfoil parametrization is adjusted so that the computed surface pressure coefficient approaches prescribed reference data, subject to mesh-generation and full-potential residual constraints.
\end{abstract}

\begin{keywords}
discrete adjoint method, inexact gradients, residual error propagation, directional tolerances, general descent methods, airfoil optimization, full-potential equation, elliptic mesh generation
\end{keywords}

\begin{MSCcodes}
49M41, 49K20, 35Q35, 65K10, 90C30, 49M05
\end{MSCcodes}

\section{Introduction}
\label{sec:introduction}

Airfoil shape optimization is a representative simulation-constrained optimization problem: a finite-dimensional vector of geometric parameters defines the airfoil, the geometry induces a body-fitted computational mesh, the mesh supports a flow solve, and the resulting pressure field defines the aerodynamic objective. In the present work, the flow model is the two-dimensional conservative full-potential equation written in general curvilinear coordinates. The numerical pipeline consists of a parabolic initial mesh, an elliptic mesh smoother, and an approximate-factorization solver for the full-potential residual.

The target application is a pressure-matching airfoil optimization problem. The airfoil is parametrized by the class-shape-transformation (CST) representation, and the objective function is a quadratic error between the computed surface pressure coefficient and reference pressure data at selected surface stations. The constraints are the mesh-generation equations, the full-potential residual equation, and geometric/mesh-quality conditions that preserve a valid body-fitted grid. The computational figures report solver outputs and optimization histories generated with the current implementation.

Adjoint methods are used to compute gradients of objectives that depend on numerical simulations. Their main appeal is that, after a discrete state equation has been solved, the gradient of a scalar reduced objective can be obtained by solving one adjoint linear system, independently of the number of design variables. This property is essential for the airfoil optimization problem considered here because a CST parametrization may contain several shape coefficients, while the flow state contains all mesh and potential unknowns.

In practical computations, however, neither the mesh equations, nor the flow equation, nor the adjoint equation is usually solved exactly. The gradient used by the optimizer is therefore an inexact adjoint gradient. This raises a central question: how do residual errors in the state and adjoint solves propagate into the reduced gradient, and how accurate must these solves be in order to guarantee descent?

The contribution of this paper is to connect a full-potential airfoil optimization model with an adjoint gradient-error analysis and an optimization method that uses directional error tolerances. For affine residuals in the state variable, the gradient error is bounded by a linear combination of the state residual tolerance and the adjoint residual tolerance. A uniform version of this bound is then obtained on compact sets of decision variables. The resulting directional tolerance converts inexact descent information into exact descent inequalities, which makes the convergence analysis essentially identical to the classical exact-gradient analysis.

The paper is organized as follows. \Cref{sec:adjoint} recalls the reduced discrete-adjoint formula in single-state notation. \Cref{sec:error} develops pointwise and uniform error estimates for inexact adjoint gradients. \Cref{sec:igdmd} formulates the inexact general directions method with directional error tolerances. \Cref{sec:convergence} proves convergence under three classes of step-size rules. \Cref{sec:flow_mesh_solver} presents the CST airfoil parametrization, mesh-generation procedure, and full-potential solver model used for the airfoil calculations. \Cref{sec:airfoil_optimization} states the pressure-matching airfoil optimization problem and records the coupled adjoint system used to compute its gradient. \Cref{sec:airfoil_results} presents the computational results. \Cref{sec:conclusion} closes the paper.

\section{Reduced functions and the discrete adjoint gradient}
\label{sec:adjoint}

Let $\Omega\subseteq\RR^m$ be an open set of decision variables. Consider a differentiable reduced scalar function
\begin{equation}
    \label{eq:Ftil_adj_article}
    \widetilde F(\vb z)=F(\vb z,\vb u(\vb z)),
\end{equation}
where $F:\Omega\times\RR^n\to\RR$ is differentiable and where the state map $\vb u:\Omega\to\RR^n$ is implicitly defined by the discrete state equation
\begin{equation}
    \label{eq:R_adj_article}
    \widetilde{\vb R}(\vb z)=\vb R(\vb z,\vb u(\vb z))=\vb{0}_{\RR^n},
    \qquad \vb z\in\Omega.
\end{equation}
Here $\vb R:\Omega\times\RR^n\to\RR^n$ is the full residual. Whenever $\pdv{\vb R}{\vb y}|_{(\vb z,\vb u(\vb z))}$ is nonsingular, the implicit function theorem justifies the local differentiability of $\vb u$.

Differentiating \eqref{eq:Ftil_adj_article} gives
\begin{equation}
    \label{eq:reduced_gradient_direct}
    \grad \widetilde F(\vb z)
    =
    \grad_{\vb z}F(\vb z,\vb u(\vb z))
    +
    \left[\pdv{\vb y}{\vb z}\bigg|_{\vb z}\right]\transp
    \grad_{\vb y}F(\vb z,\vb u(\vb z)).
\end{equation}
On the other hand, differentiating the state equation \eqref{eq:R_adj_article} yields
\begin{equation}
    \label{eq:state_sensitivity_equation}
    \pdv{\vb R}{\vb z}\bigg|_{(\vb z,\vb u(\vb z))}
    +
    \pdv{\vb R}{\vb y}\bigg|_{(\vb z,\vb u(\vb z))}
    \pdv{\vb y}{\vb z}\bigg|_{\vb z}
    =\vb{0}_{\RR^{n\times m}}.
\end{equation}
Thus
\begin{equation}
    \label{eq:state_sensitivity_explicit}
    \pdv{\vb y}{\vb z}\bigg|_{\vb z}
    =-
    \left[
    \pdv{\vb R}{\vb y}\bigg|_{(\vb z,\vb u(\vb z))}
    \right]^{-1}
    \pdv{\vb R}{\vb z}\bigg|_{(\vb z,\vb u(\vb z))}.
\end{equation}
Substitution into \eqref{eq:reduced_gradient_direct} gives
\begin{equation}
    \label{eq:reduced_gradient_direct_inverse}
    \grad \widetilde F(\vb z)
    =
    \grad_{\vb z}F(\vb z,\vb u(\vb z))
    -
    \left[
    \pdv{\vb R}{\vb z}\bigg|_{(\vb z,\vb u(\vb z))}
    \right]\transp
    \left[
    \pdv{\vb R}{\vb y}\bigg|_{(\vb z,\vb u(\vb z))}
    \right]^{-\transp}
    \grad_{\vb y}F(\vb z,\vb u(\vb z)).
\end{equation}

The discrete adjoint variable $\bm\psi\in\RR^n$ is defined as the solution of
\begin{equation}
    \label{eq:adjoint_equation_article}
    \left[
    \pdv{\vb R}{\vb y}\bigg|_{(\vb z,\vb u(\vb z))}
    \right]\transp
    \bm\psi
    =
    \grad_{\vb y}F(\vb z,\vb u(\vb z)).
\end{equation}
Therefore, the reduced gradient is
\begin{equation}
    \label{eq:adjoint_gradient_formula_article}
    \grad \widetilde F(\vb z)
    =
    \grad_{\vb z}F(\vb z,\vb u(\vb z))
    -
    \left[
    \pdv{\vb R}{\vb z}\bigg|_{(\vb z,\vb u(\vb z))}
    \right]\transp
    \bm\psi.
\end{equation}
The computational advantage is that a scalar objective requires one adjoint solve, whereas a direct sensitivity calculation requires solving one system per design variable.

\section{Error propagation for inexact adjoint gradients}
\label{sec:error}

This section analyzes how inexact state and adjoint solves affect the reduced gradient. The focus is on residuals that are affine with respect to the state variable:
\begin{equation}
    \label{eq:R_lin_article}
    \vb R(\vb z,\vb y)=\vb A(\vb z)\vb y-\vb b(\vb z),
\end{equation}
where $\vb A:\RR^m\to\GL(n)$ and $\vb b:\RR^m\to\RR^n$ are $C^1$ mappings. This class is relevant because linear discretized state equations naturally lead to residuals of the form \eqref{eq:R_lin_article}. The partial Jacobians are
\begin{align}
    \pdv{\vb R}{\vb z}\bigg|_{(\vb z,\vb y)}
    &=
    \pdv{\vb A}{\vb z}\bigg|_{\vb z}\vb y
    -
    \pdv{\vb b}{\vb z}\bigg|_{\vb z},
    \label{eq:drdz_affine_article}
    \\
    \pdv{\vb R}{\vb y}\bigg|_{(\vb z,\vb y)}
    &=\vb A(\vb z).
\end{align}
The tensor-vector product $\pdv{\vb A}{\vb z}|_{\vb z}\vb y$ is interpreted columnwise:
\begin{equation}
    \label{eq:dadz_y_article}
    \pdv{\vb A}{\vb z}\bigg|_{\vb z}\vb y
    =
    \begin{bmatrix}
    \pdv{\vb A}{z_1}\big|_{\vb z}\vb y &
    \pdv{\vb A}{z_2}\big|_{\vb z}\vb y &
    \cdots &
    \pdv{\vb A}{z_m}\big|_{\vb z}\vb y
    \end{bmatrix}.
\end{equation}

\subsection{Regularity assumptions and auxiliary bounds}
\label{subsec:error_aux}

\begin{assumption}
\label{ass:A_derivative_lipschitz}
For each $k=1,\dots,m$, the mapping $\pdv{\vb A}{z_k}:\RR^m\to\RR^{n\times n}$ is Lipschitz continuous. Thus, there exists $L_{\vb A,k}>0$ such that
\[
    \norm{
    \pdv{\vb A}{z_k}\bigg|_{\vb z}
    -
    \pdv{\vb A}{z_k}\bigg|_{\vb z'}
    }_2
    \le
    L_{\vb A,k}\norm{\vb z-\vb z'}_2,
    \qquad \vb z,\vb z'\in\RR^m.
\]
\end{assumption}

\begin{assumption}
\label{ass:A_derivative_bounded}
There exists $C_{\vb A}>0$ such that
\[
    \norm{\pdv{\vb A}{z_k}\bigg|_{\vb z}}_2
    \le C_{\vb A},
    \qquad \vb z\in\RR^m,
    \quad k=1,\dots,m.
\]
\end{assumption}

\begin{assumption}
\label{ass:b_derivative_lipschitz}
The mapping $\pdv{\vb b}{\vb z}:\RR^m\to\RR^{n\times m}$ is Lipschitz continuous. Thus, there exists $L_{\vb b}>0$ such that
\[
    \norm{
    \pdv{\vb b}{\vb z}\bigg|_{\vb z}
    -
    \pdv{\vb b}{\vb z}\bigg|_{\vb z'}
    }_2
    \le L_{\vb b}\norm{\vb z-\vb z'}_2,
    \qquad \vb z,\vb z'\in\RR^m.
\]
\end{assumption}

\begin{assumption}
\label{ass:b_derivative_bounded}
There exists $C_{\vb b}>0$ such that
\[
    \norm{\pdv{\vb b}{\vb z}\bigg|_{\vb z}}_2
    \le C_{\vb b},
    \qquad \vb z\in\RR^m.
\]
\end{assumption}

\begin{assumption}
\label{ass:F_smooth_full}
The full objective $F\in C^1(\RR^m\times\RR^n,\RR)$ is $L_F$-smooth, that is,
\[
    \norm{\grad F(\vb w_1)-\grad F(\vb w_2)}_2
    \le L_F\norm{\vb w_1-\vb w_2}_2,
    \qquad \vb w_1,\vb w_2\in\RR^m\times\RR^n.
\]
\end{assumption}

\begin{lemma}
\label{lem:drdz_bound_article}
Let $\mathcal Y\subset\RR^n$ be compact and let $C_{\mathcal Y}=\sup_{\vb y\in\mathcal Y}\norm{\vb y}_2$. Under \Cref{ass:A_derivative_bounded,ass:b_derivative_bounded},
\[
    \norm{\pdv{\vb R}{\vb z}\bigg|_{(\vb z,\vb y)}}_2
    \le
    \sqrt m\,C_{\vb A}C_{\mathcal Y}+C_{\vb b},
    \qquad (\vb z,\vb y)\in\RR^m\times\mathcal Y.
\]
\end{lemma}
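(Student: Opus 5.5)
The plan is to start from the explicit form of the partial Jacobian in \eqref{eq:drdz_affine_article} and apply the triangle inequality for the spectral norm:
\[
    \norm{\pdv{\vb R}{\vb z}\bigg|_{(\vb z,\vb y)}}_2
    \le
    \norm{\pdv{\vb A}{\vb z}\bigg|_{\vb z}\vb y}_2
    +
    \norm{\pdv{\vb b}{\vb z}\bigg|_{\vb z}}_2 .
\]
The second term is bounded by $C_{\vb b}$ directly from \Cref{ass:b_derivative_bounded}. All remaining work concerns the first term, the $n\times m$ matrix $\vb M=\pdv{\vb A}{\vb z}|_{\vb z}\vb y$ defined columnwise in \eqref{eq:dadz_y_article}.

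For $\vb M$, I will bound the spectral norm by the Frobenius norm, $\norm{\vb M}_2\le\norm{\vb M}_F$. Since $\vb M$ has columns $\vb m_k=\pdv{\vb A}{z_k}|_{\vb z}\vb y$, we have $\norm{\vb M}_F^2=\sum_{k=1}^m\norm{\vb m_k}_2^2$. Each column satisfies
\[
    \norm{\vb m_k}_2\le\norm{\pdv{\vb A}{z_k}\bigg|_{\vb z}}_2\norm{\vb y}_2\le C_{\vb A}C_{\mathcal Y},
\]
using \Cref{ass:A_derivative_bounded} and $\vb y\in\mathcal Y$. Summing the $m$ squared column bounds gives $\norm{\vb M}_F\le\sqrt m\,C_{\vb A}C_{\mathcal Y}$, which is exactly the first term of the claimed bound.

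Combining the two estimates yields the inequality for every $(\vb z,\vb y)\in\RR^m\times\mathcal Y$. Compactness of $\mathcal Y$ is only needed to guarantee $C_{\mathcal Y}<\infty$. There is no real obstacle. The only point needing care is the $\sqrt m$ factor, which comes from passing through the Frobenius norm. Alternatively, one can write $\vb M\vb v=\sum_k v_k\vb m_k$ and apply Cauchy--Schwarz, giving $\norm{\vb M\vb v}_2\le\sum_k|v_k|\norm{\vb m_k}_2\le\norm{\vb v}_1 C_{\vb A}C_{\mathcal Y}\le\sqrt m\norm{\vb v}_2C_{\vb A}C_{\mathcal Y}$. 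This confirms the same constant.
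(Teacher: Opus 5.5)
Your proposal is correct and matches the paper's proof essentially step for step: the triangle inequality applied to \eqref{eq:drdz_affine_article}, the bound $C_{\vb b}$ on the $\vb b$-term, and the Frobenius-norm estimate in which the $m$ squared column bounds $C_{\vb A}\norm{\vb y}_2$ are summed to give the $\sqrt m$ factor. Your alternative argument via the $\ell_1$ norm and Cauchy--Schwarz is also valid and yields the same constant.
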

\begin{proof}
Applying the Euclidean norm to \eqref{eq:drdz_affine_article} and using the triangle inequality gives
\begin{align*}
    \norm{\pdv{\vb R}{\vb z}\bigg|_{(\vb z,\vb y)}}_2
    &\le
    \norm{\pdv{\vb A}{\vb z}\bigg|_{\vb z}\vb y}_2
    +
    \norm{\pdv{\vb b}{\vb z}\bigg|_{\vb z}}_2 \\
    &\le
    \norm{\pdv{\vb A}{\vb z}\bigg|_{\vb z}\vb y}_F+C_{\vb b} \\
    &=
    \left(
    \sum_{k=1}^m
    \norm{\pdv{\vb A}{z_k}\bigg|_{\vb z}\vb y}_2^2
    \right)^{1/2}+C_{\vb b} \\
    &\le
    \sqrt m\,C_{\vb A}\norm{\vb y}_2+C_{\vb b}
    \le
    \sqrt m\,C_{\vb A}C_{\mathcal Y}+C_{\vb b}.
\end{align*}
\end{proof}

\begin{lemma}
\label{lem:drdz_lip_same_z_article}
Under \Cref{ass:A_derivative_bounded}, for any $\vb z\in\RR^m$ and any $\vb y_1,\vb y_2\in\RR^n$,
\[
    \norm{
    \pdv{\vb R}{\vb z}\bigg|_{(\vb z,\vb y_1)}
    -
    \pdv{\vb R}{\vb z}\bigg|_{(\vb z,\vb y_2)}
    }_2
    \le
    \sqrt m\,C_{\vb A}\norm{\vb y_1-\vb y_2}_2.
\]
\end{lemma}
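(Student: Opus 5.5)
The plan is to repeat the argument of \Cref{lem:drdz_bound_article}, applied now to a difference of two partial Jacobians at the same decision point. By the affine structure \eqref{eq:drdz_affine_article}, the term $\pdv{\vb b}{\vb z}|_{\vb z}$ does not depend on the state variable. It therefore cancels when we subtract the two Jacobians evaluated at $(\vb z,\vb y_1)$ and $(\vb z,\vb y_2)$. Since the tensor--vector product \eqref{eq:dadz_y_article} is linear in its vector argument, we obtain
\[
    \pdv{\vb R}{\vb z}\bigg|_{(\vb z,\vb y_1)}
    -
    \pdv{\vb R}{\vb z}\bigg|_{(\vb z,\vb y_2)}
    =
    \pdv{\vb A}{\vb z}\bigg|_{\vb z}(\vb y_1-\vb y_2),
\]
which is the $n\times m$ matrix whose $k$-th column is $\pdv{\vb A}{z_k}|_{\vb z}(\vb y_1-\vb y_2)$.

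Next, bound the spectral norm of this matrix by its Frobenius norm. The Frobenius norm is computed column by column:
\[
    \norm{\pdv{\vb A}{\vb z}\bigg|_{\vb z}(\vb y_1-\vb y_2)}_F
    =\left(\sum_{k=1}^m\norm{\pdv{\vb A}{z_k}\bigg|_{\vb z}(\vb y_1-\vb y_2)}_2^2\right)^{1/2}.
\]
Each column then satisfies $\norm{\pdv{\vb A}{z_k}|_{\vb z}(\vb y_1-\vb y_2)}_2\le C_{\vb A}\norm{\vb y_1-\vb y_2}_2$, by \Cref{ass:A_derivative_bounded} and submultiplicativity of the induced 2-norm. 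Summing the $m$ identical bounds and taking the square root gives the factor $\sqrt m\,C_{\vb A}\norm{\vb y_1-\vb y_2}_2$, which is the claim.

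No step is a real obstacle. The only points to check are that the $\vb b$-term cancels exactly, which holds because $\vb b$ depends only on $\vb z$, and that the columnwise interpretation \eqref{eq:dadz_y_article} makes the product linear in $\vb y$. Unlike \Cref{lem:drdz_bound_article}, no compactness of the state set is needed, so the estimate holds for all $\vb y_1,\vb y_2\in\RR^n$.
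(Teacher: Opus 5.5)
Your proposal is correct and follows the paper's proof: the $\vb b$-term cancels, the difference reduces to $\pdv{\vb A}{\vb z}\big|_{\vb z}(\vb y_1-\vb y_2)$, and its spectral norm is bounded by the same columnwise Frobenius estimate used in \Cref{lem:drdz_bound_article}. You write out explicitly the steps the paper only cites, and your remark that no compactness of the state set is needed is accurate.
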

\begin{proof}
Using \eqref{eq:drdz_affine_article},
\[
    \pdv{\vb R}{\vb z}\bigg|_{(\vb z,\vb y_1)}
    -
    \pdv{\vb R}{\vb z}\bigg|_{(\vb z,\vb y_2)}
    =
    \pdv{\vb A}{\vb z}\bigg|_{\vb z}(\vb y_1-\vb y_2).
\]
By the same Frobenius-norm estimate used in \Cref{lem:drdz_bound_article},
\[
    \norm{\pdv{\vb A}{\vb z}\bigg|_{\vb z}(\vb y_1-\vb y_2)}_2
    \le
    \sqrt m\,C_{\vb A}\norm{\vb y_1-\vb y_2}_2.
\]
\end{proof}

\subsection{Pointwise state, adjoint, and gradient-error estimates}
\label{subsec:pointwise_error}

Fix $\hat{\vb z}\in\RR^m$ and define
\begin{equation}
    \label{eq:phi_zh_article}
    \Phi_{\hat{\vb z}}(\vb y)
    =
    \vb R(\hat{\vb z},\vb y)
    =
    \vb A(\hat{\vb z})\vb y-\vb b(\hat{\vb z}).
\end{equation}
Since $\vb A(\hat{\vb z})\in\GL(n)$, the exact state is
\[
    \vb y^*_{\hat{\vb z}}
    =
    \vb A(\hat{\vb z})^{-1}\vb b(\hat{\vb z}),
\]
and the inverse residual map is
\begin{equation}
    \label{eq:phi_inv_article}
    \Phi_{\hat{\vb z}}^{-1}(\vb r)
    =
    \vb A(\hat{\vb z})^{-1}\vb r+
    \vb y^*_{\hat{\vb z}}.
\end{equation}
Given a prescribed state-residual tolerance $\tau_{\mathcal R}>0$, define the admissible state set
\begin{equation}
    \label{eq:Y_hat_article}
    \mathcal Y_{\hat{\vb z}}
    =
    \{\vb y\in\RR^n\mid
    \norm{\Phi_{\hat{\vb z}}(\vb y)}_2\le \tau_{\mathcal R}\}.
\end{equation}
Let
\begin{equation}
    \label{eq:C_R_article}
    C_{\mathcal R}(\hat{\vb z})=\norm{\vb A(\hat{\vb z})^{-1}}_2.
\end{equation}

Geometrically, $\mathcal Y_{\hat{\vb z}}$ is an ellipsoid centered at the exact state $\vb y^*_{\hat{\vb z}}$, with semi-axis lengths controlled by the singular values of $\vb A(\hat{\vb z})$ and by the residual tolerance $\tau_{\mathcal R}$; see \Cref{fig:elipsoide_estado}. Equivalently, the map $\Phi_{\hat{\vb z}}$ sends this ellipsoid into the residual ball $\mathbb B_{\tau_{\mathcal R}}[\vb 0]$, while $\Phi_{\hat{\vb z}}^{-1}$ maps residuals back to admissible inexact states, as represented in \Cref{fig:mapeamento_espacos}.

\begin{figure}[htbp]
        \centering
        \resizebox{0.5\textwidth}{!}{%
            \begin{tikzpicture}[>=Stealth, thick, font=\large]
    
                \def\cx{0}      
                \def\cy{0}      
                \def\a{5}       
                \def\b{2.5}     
                \def\ang{25}    
    
                \coordinate (C) at (\cx,\cy); 
    
                \filldraw[color=blue!60!black, fill=blue!10, opacity=0.5, rotate around={\ang:(C)}]
                    (C) ellipse [x radius=\a, y radius=\b];
                \draw[blue!80!black, thick, rotate around={\ang:(C)}]
                    (C) ellipse [x radius=\a, y radius=\b];
    
                \coordinate (EixoMaiorEnd) at ($(C) + ({\a*cos(\ang)}, {\a*sin(\ang)})$);
                \coordinate (EixoMenorEnd) at ($(C) + ({\b*cos(\ang+90)}, {\b*sin(\ang+90)})$);
    
                \coordinate (Ypoint) at ($(C) + ({\ang-155}:{0.6*\a})$);
    
                \draw[black, very thick] (C) -- (Ypoint);
    
                \draw[->, red!80!black, ultra thick] (C) -- (EixoMaiorEnd)
                    node[midway, above, sloped] {$\frac{\tau_{\mathcal{R}}}{\sigma_{\min}\left[\vb{A}(\hat{\vb{z}})\right]}$};
    
                \draw[->, green!60!black, ultra thick] (C) -- (EixoMenorEnd)
                    node[midway, below, sloped] {$\frac{\tau_{\mathcal{R}}}{\sigma_{\max}\left[\vb{A}(\hat{\vb{z}})\right]}$};
    
                \fill[red!80!black] (C) circle (3.5pt);
                \node[below right=2pt, black] at (C) {$\vb{y}_{\hat{\vb{z}}}^\ast$};
    
                \fill[black] (Ypoint) circle (3.5pt);
                \node[above left=1pt] at (Ypoint) {$\vb{y}$};
    
                \node[blue!80!black, font=\Large] at ($(C) + (\ang+75:{\b+0.6})$) {$\mathcal{Y}_{\hat{\vb{z}}}$};
    
            \end{tikzpicture}%
        }
        \caption{Geometric representation of the admissible state set $\mathcal{Y}_{\hat{\vb{z}}}$.}
        \label{fig:elipsoide_estado}
    \end{figure}

\begin{figure}[htbp]
        \centering
        \resizebox{\textwidth}{!}{%
            \begin{tikzpicture}[
                >=Latex,
                box/.style={draw=black, thick, minimum width=4.5cm, minimum height=5.0cm, rounded corners=2pt},
                dot/.style={circle, fill, inner sep=1.6pt},
                label text/.style={font=\large\sffamily},
                fill_y/.style={fill=purple!20, fill opacity=0.5, draw=purple!70!black, thick, text opacity=1},
                fill_r/.style={fill=red!30, fill opacity=0.4, draw=red!70!black, thick, text opacity=1},
                map_arrow/.style={->, thick, green!60!black, shorten >=2pt, shorten <=2pt},
                inv_arrow/.style={->, thick, green!60!black, shorten >=2pt, shorten <=2pt},
                optimal_arrow/.style={->, thick, cyan!60!black, dashed, shorten >=2pt, shorten <=2pt}
            ]
        
                \node[box] (ZBox) at (0,0) {};
                \node[label text] (ZLabel) at ($(ZBox.center)+(0, 3.0)$) {$\mathcal{Z}=\RR^m$};
                \node[dot, label={above:$\hat{\vb{z}}$}] (zhat) at (ZBox.center) {};
        
                \node[box, right=1.5cm of ZBox] (YBox) {};
                \node[label text] (YLabel) at ($(YBox.center)+(0, 3.0)$) {$\mathcal{Y}$};
                \coordinate (YCenter) at (YBox.center);
                
                \node[dot, label={above:$\vb{y}^\ast_{\hat{\vb{z}}}$}] (yzhat) at (YCenter) {};
                
                \node[dot, label={below:$\vb{y}$}] (ybar) at ($(YCenter)+(-1.0, 0.5)$) {};
    
                \draw[fill_y, rotate around={-25:(YCenter)}] 
                    (YCenter) ellipse [x radius=1.8cm, y radius=0.8cm];
        
                \node[purple!70!black, font=\small, anchor=south] at ($(YCenter)+(0.0, 1.15)$) {$\mathcal{Y}_{\hat{\vb{z}}}$};
        
                \draw[thick, black!80] (ybar) -- (yzhat); 
        
                \coordinate (blob_edge) at ($(YCenter)+(-1.4, 0.8)$);
                \draw[->, blue!80!black, thick, bend left=15] (zhat) to (blob_edge);
        
                \coordinate[right=6.0cm of YBox] (ROrigin);
                \draw[thick, ->] ($(ROrigin)+(-2.5,0)$) -- ($(ROrigin)+(2.5,0)$);
                \draw[thick, ->] ($(ROrigin)+(0,-2.5)$) -- ($(ROrigin)+(0,2.5)$);
        
                \draw[fill_r] (ROrigin) circle (1.8cm);
                \node[dot, inner sep=1.1pt] (rOriginDot) at (ROrigin) {};
        
                \draw[red!70!black, ->] (ROrigin) -- 
                    node[midway, above=0.1cm, sloped, font=\small] {$\tau_{\mathcal{R}}$} ++(35:1.8cm);
        
                \node[dot, label={above right:$\vb{r}$}] (rbar) at ($(ROrigin)+(-0.9, -0.7)$) {};
        
                \draw[map_arrow, bend left=45] (ybar) to 
                    node[pos=0.55, above=0.1cm, green!60!black, sloped] {$\vb{r} = \Phi_{\hat{\vb{z}}}(\vb{y})$} (rbar);
        
                \draw[inv_arrow, bend left=45] (rbar) to 
                    node[pos=0.35, below=0.1cm, green!60!black, sloped] {$\vb{y} = \Phi_{\hat{\vb{z}}}^{-1}(\vb{r})$} (ybar);
        
                \draw[optimal_arrow, bend left=18] (yzhat) to 
                    node[pos=0.4, above, font=\scriptsize, sloped, cyan!60!black] {$\Phi_{\hat{\vb{z}}}(\vb{y}^\ast_{\hat{\vb{z}}}) = \vb{0}_{\mathbb{R}^n}$} (rOriginDot);
        
                \node (norm_text) at ($(ROrigin)+(-1.0, -3.5)$) {$\norm{\Phi_{\hat{\vb{z}}}(\vb{y})}_2 \le \tau_{\mathcal{R}}$};
                \draw[dashed, ->, black!40, thick, bend left=10, shorten >=5pt] (rbar) to (norm_text.north);
        
                \node[red!70!black, font=\small, anchor=south east] at ($(ROrigin)+(2.77,1.25)$)
                {$\mathbb{B}_{\tau_{\mathcal{R}}}\!\big[\vb{0}_{\mathbb{R}^n}\big]$};
        
            \end{tikzpicture}%
        }
        \caption{Geometric representation of the mapping between the residual ball and the ellipsoid of admissible states.}
        \label{fig:mapeamento_espacos}
    \end{figure}

\begin{lemma}[State error]
\label{lem:state_error_article}
For every $\vb y\in\mathcal Y_{\hat{\vb z}}$,
\[
    \norm{\vb y-\vb y^*_{\hat{\vb z}}}_2
    \le
    C_{\mathcal R}(\hat{\vb z})\tau_{\mathcal R}.
\]
\end{lemma}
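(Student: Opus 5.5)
The plan is to express an arbitrary admissible state through the inverse residual map \eqref{eq:phi_inv_article} and then take norms. Fix $\vb y\in\mathcal Y_{\hat{\vb z}}$ and define its residual $\vb r=\Phi_{\hat{\vb z}}(\vb y)=\vb A(\hat{\vb z})\vb y-\vb b(\hat{\vb z})$. By the definition \eqref{eq:Y_hat_article} of the admissible set, $\norm{\vb r}_2\le\tau_{\mathcal R}$.

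Since $\vb A(\hat{\vb z})\in\GL(n)$, the map $\Phi_{\hat{\vb z}}$ is an affine bijection. Applying \eqref{eq:phi_inv_article} gives $\vb y=\Phi_{\hat{\vb z}}^{-1}(\vb r)=\vb A(\hat{\vb z})^{-1}\vb r+\vb y^*_{\hat{\vb z}}$. Equivalently, one can subtract $\vb A(\hat{\vb z})\vb y^*_{\hat{\vb z}}=\vb b(\hat{\vb z})$ from $\vb r$ to get $\vb r=\vb A(\hat{\vb z})(\vb y-\vb y^*_{\hat{\vb z}})$. Either way,
\[
    \vb y-\vb y^*_{\hat{\vb z}}=\vb A(\hat{\vb z})^{-1}\vb r .
\]

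Taking Euclidean norms and using the submultiplicative property of the induced $2$-norm gives
\[
    \norm{\vb y-\vb y^*_{\hat{\vb z}}}_2\le\norm{\vb A(\hat{\vb z})^{-1}}_2\norm{\vb r}_2\le C_{\mathcal R}(\hat{\vb z})\tau_{\mathcal R},
\]
using \eqref{eq:C_R_article} for the last step.

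There is no real obstacle. The only point to check is the identity $\vb A(\hat{\vb z})\vb y^*_{\hat{\vb z}}=\vb b(\hat{\vb z})$, which follows directly from the definition of the exact state. This also justifies the ellipsoid picture: the bound equals $\tau_{\mathcal R}/\sigma_{\min}[\vb A(\hat{\vb z})]$, the longest semi-axis.
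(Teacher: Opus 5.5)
Your proof is correct and matches the paper's argument: you set $\vb r=\Phi_{\hat{\vb z}}(\vb y)$, invert via \eqref{eq:phi_inv_article} to get $\vb y-\vb y^*_{\hat{\vb z}}=\vb A(\hat{\vb z})^{-1}\vb r$, and bound the result by $\norm{\vb A(\hat{\vb z})^{-1}}_2\norm{\vb r}_2\le C_{\mathcal R}(\hat{\vb z})\tau_{\mathcal R}$. Your closing remark that the bound equals $\tau_{\mathcal R}/\sigma_{\min}[\vb A(\hat{\vb z})]$, the longest semi-axis of the admissible-state ellipsoid, is also correct.
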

\begin{proof}
For $\vb y\in\mathcal Y_{\hat{\vb z}}$, set $\vb r=\Phi_{\hat{\vb z}}(\vb y)$. Then $\norm{\vb r}_2\le\tau_{\mathcal R}$ and, by \eqref{eq:phi_inv_article},
\[
    \vb y-\vb y^*_{\hat{\vb z}}
    =\vb A(\hat{\vb z})^{-1}\vb r.
\]
Taking norms gives
\[
    \norm{\vb y-\vb y^*_{\hat{\vb z}}}_2
    \le
    \norm{\vb A(\hat{\vb z})^{-1}}_2\norm{\vb r}_2
    \le
    C_{\mathcal R}(\hat{\vb z})\tau_{\mathcal R}.
\]
\end{proof}

For a state $\vb y\in\mathcal Y_{\hat{\vb z}}$, define the exact adjoint associated with that state by
\begin{equation}
    \label{eq:adjoint_state_y_article}
    \vb A(\hat{\vb z})\transp\bm\psi^*_{\hat{\vb z},\vb y}
    =
    \grad_{\vb y}F(\hat{\vb z},\vb y).
\end{equation}
The exact reduced-gradient adjoint is $\bm\psi^*_{\hat{\vb z},\vb y^*_{\hat{\vb z}}}$. Given an adjoint residual tolerance $\tau_{\bm\psi}>0$, define the admissible adjoint set
\begin{equation}
    \label{eq:A_hat_y_article}
    \mathcal A_{\hat{\vb z},\vb y}
    =
    \left\{
    \bm\psi\in\RR^n\mid
    \norm{\vb A(\hat{\vb z})\transp\bm\psi-
    \grad_{\vb y}F(\hat{\vb z},\vb y)}_2
    \le \tau_{\bm\psi}
    \right\}.
\end{equation}
For fixed $\hat{\vb z}$ and $\vb y$, the set $\mathcal A_{\hat{\vb z},\vb y}$ is an ellipsoid centered at the exact adjoint $\bm\psi^*_{\hat{\vb z},\vb y}$. Its semi-axis lengths are governed by the singular values of $\vb A(\hat{\vb z})$ and by the adjoint residual tolerance $\tau_{\bm\psi}$, as illustrated in \Cref{fig:elipsoide_adjunto}.

\begin{figure}[htbp]
        \centering
        \resizebox{0.45\textwidth}{!}{%
            \begin{tikzpicture}[>=Stealth, thick, font=\large]
                \def\cx{0}      
                \def\cy{0}      
                \def\a{5}       
                \def\b{2.5}     
                \def\ang{30}
    
                \coordinate (C) at (\cx,\cy); 
    
                \filldraw[color=red!60!black, fill=red!10, opacity=0.5, rotate around={\ang:(C)}]
                    (C) ellipse [x radius=\a, y radius=\b];
                \draw[red!80!black, thick, rotate around={\ang:(C)}]
                    (C) ellipse [x radius=\a, y radius=\b];
    
                \coordinate (EixoMaiorEnd) at ($(C) + ({\a*cos(\ang)}, {\a*sin(\ang)})$);
                \coordinate (EixoMenorEnd) at ($(C) + ({\b*cos(\ang+90)}, {\b*sin(\ang+90)})$);
    
                \coordinate (PsiPoint) at ($(C) + ({\ang-160}:{0.6*\a})$);
    
                \draw[black, very thick] (C) -- (PsiPoint);
    
                \draw[->, blue!80!black, ultra thick] (C) -- (EixoMaiorEnd)
                    node[midway, above, sloped] {$\frac{\tau_{\bm{\psi}}}{\sigma_{\min}[\vb{A}(\hat{\vb{z}})]}$};
    
                \draw[->, green!60!black, ultra thick] (C) -- (EixoMenorEnd)
                    node[midway, below, sloped] {$\frac{\tau_{\bm{\psi}}}{\sigma_{\max}[\vb{A}(\hat{\vb{z}})]}$};
    
                \fill[red!80!black] (C) circle (3.5pt);
                \node[below right=2pt, black] at (C) {$\bm{\psi}^\ast_{\hat{\vb{z}}, \vb{y}}$};
    
                \fill[black] (PsiPoint) circle (3.5pt);
                \node[above left=1pt] at (PsiPoint) {$\bm{\psi}$};
    
                \node[red!80!black, font=\Large] at ($(C) + (\ang+75:{\b+0.6})$) {$\mathcal{A}_{\hat{\vb{z}},\vb{y}}$};
    
            \end{tikzpicture}%
        }
        \caption{Geometric representation of the admissible adjoint-variable set $\mathcal{A}_{\hat{\vb{z}},\vb{y}}$.}
        \label{fig:elipsoide_adjunto}
    \end{figure}

Finally, define
\begin{equation}
    \label{eq:MF_article}
    M_F(\hat{\vb z})
    =
    \sup_{\vb y\in\mathcal Y_{\hat{\vb z}}}
    \norm{\grad_{\vb y}F(\hat{\vb z},\vb y)}_2,
    \qquad
    C_{\mathcal Y_{\hat{\vb z}}}
    =
    \sup_{\vb y\in\mathcal Y_{\hat{\vb z}}}
    \norm{\vb y}_2.
\end{equation}

\begin{lemma}[Adjoint error]
\label{lem:adjoint_error_article}
Let $\vb y\in\mathcal Y_{\hat{\vb z}}$ and $\bm\psi\in\mathcal A_{\hat{\vb z},\vb y}$. Under \Cref{ass:F_smooth_full},
\[
    \norm{\bm\psi-\bm\psi^*_{\hat{\vb z},\vb y^*_{\hat{\vb z}}}}_2
    \le
    C_{\mathcal R}(\hat{\vb z})^2L_F\tau_{\mathcal R}
    +
    C_{\mathcal R}(\hat{\vb z})\tau_{\bm\psi}.
\]
\end{lemma}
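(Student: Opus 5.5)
The plan is to split the error with the triangle inequality, passing through the intermediate exact adjoint $\bm\psi^*_{\hat{\vb z},\vb y}$ associated with the inexact state $\vb y$:
\[
    \norm{\bm\psi-\bm\psi^*_{\hat{\vb z},\vb y^*_{\hat{\vb z}}}}_2
    \le
    \norm{\bm\psi-\bm\psi^*_{\hat{\vb z},\vb y}}_2
    +
    \norm{\bm\psi^*_{\hat{\vb z},\vb y}-\bm\psi^*_{\hat{\vb z},\vb y^*_{\hat{\vb z}}}}_2 .
\]
The first term is the pure adjoint-solve error at a fixed state. The second measures how the exact adjoint moves when the state is perturbed within $\mathcal Y_{\hat{\vb z}}$.

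For the first term, I would argue exactly as in \Cref{lem:state_error_article}, but with $\vb A(\hat{\vb z})\transp$ in place of $\vb A(\hat{\vb z})$. Set $\vb s=\vb A(\hat{\vb z})\transp\bm\psi-\grad_{\vb y}F(\hat{\vb z},\vb y)$, so that $\norm{\vb s}_2\le\tau_{\bm\psi}$ because $\bm\psi\in\mathcal A_{\hat{\vb z},\vb y}$. Subtracting \eqref{eq:adjoint_state_y_article} gives $\bm\psi-\bm\psi^*_{\hat{\vb z},\vb y}=\vb A(\hat{\vb z})^{-\transp}\vb s$. Since $\norm{\vb A(\hat{\vb z})^{-\transp}}_2=\norm{\vb A(\hat{\vb z})^{-1}}_2=C_{\mathcal R}(\hat{\vb z})$ (the spectral norm is invariant under transposition), this term is at most $C_{\mathcal R}(\hat{\vb z})\tau_{\bm\psi}$.

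For the second term, subtract the two adjoint equations \eqref{eq:adjoint_state_y_article}, written at $\vb y$ and at $\vb y^*_{\hat{\vb z}}$:
\[
    \bm\psi^*_{\hat{\vb z},\vb y}-\bm\psi^*_{\hat{\vb z},\vb y^*_{\hat{\vb z}}}
    =\vb A(\hat{\vb z})^{-\transp}\bigl(\grad_{\vb y}F(\hat{\vb z},\vb y)-\grad_{\vb y}F(\hat{\vb z},\vb y^*_{\hat{\vb z}})\bigr).
\]
The gradient difference is then controlled by \Cref{ass:F_smooth_full}. The $\vb y$-block of $\grad F$ is a sub-vector of the full gradient, so its norm is at most the norm of the full gradient difference. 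That difference is at most $L_F$ times the distance between $(\hat{\vb z},\vb y)$ and $(\hat{\vb z},\vb y^*_{\hat{\vb z}})$, which equals $L_F\norm{\vb y-\vb y^*_{\hat{\vb z}}}_2$. Invoking \Cref{lem:state_error_article} bounds this by $L_F C_{\mathcal R}(\hat{\vb z})\tau_{\mathcal R}$. Multiplying by $\norm{\vb A(\hat{\vb z})^{-\transp}}_2=C_{\mathcal R}(\hat{\vb z})$ then yields the term $C_{\mathcal R}(\hat{\vb z})^2L_F\tau_{\mathcal R}$. Adding the two bounds gives the claim.

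No step is a real obstacle. The only points needing care are two small justifications. The first is that the partial gradient $\grad_{\vb y}F$ inherits the Lipschitz constant $L_F$ from the full gradient; this holds because restricting to a block of components cannot increase the norm. The second is the identity $\norm{\vb A^{-\transp}}_2=\norm{\vb A^{-1}}_2$, which holds because a matrix and its transpose have the same singular values.
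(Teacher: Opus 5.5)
Your proposal is correct and follows the paper's proof: the triangle inequality through the intermediate exact adjoint $\bm\psi^*_{\hat{\vb z},\vb y}$, a residual bound through $\vb A(\hat{\vb z})^{-\transp}$ for the first term, and Lipschitz continuity of $\grad_{\vb y}F$ combined with \Cref{lem:state_error_article} for the second. The two justifications you add, $\norm{\vb A(\hat{\vb z})^{-\transp}}_2=\norm{\vb A(\hat{\vb z})^{-1}}_2$ and the restriction of the Lipschitz bound to the $\vb y$-block, are steps the paper uses without stating them.
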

\begin{proof}
By the triangle inequality,
\begin{equation}
    \label{eq:adj_err_split_article}
    \norm{\bm\psi-\bm\psi^*_{\hat{\vb z},\vb y^*_{\hat{\vb z}}}}_2
    \le
    \norm{\bm\psi-\bm\psi^*_{\hat{\vb z},\vb y}}_2
    +
    \norm{\bm\psi^*_{\hat{\vb z},\vb y}-\bm\psi^*_{\hat{\vb z},\vb y^*_{\hat{\vb z}}}}_2.
\end{equation}
For the first term, subtracting \eqref{eq:adjoint_state_y_article} from the residual equation defining $\mathcal A_{\hat{\vb z},\vb y}$ gives
\[
    \vb A(\hat{\vb z})\transp
    (\bm\psi-\bm\psi^*_{\hat{\vb z},\vb y})
    =
    \vb r_{\bm\psi},
    \qquad \norm{\vb r_{\bm\psi}}_2\le\tau_{\bm\psi}.
\]
Hence
\[
    \norm{\bm\psi-\bm\psi^*_{\hat{\vb z},\vb y}}_2
    \le
    C_{\mathcal R}(\hat{\vb z})\tau_{\bm\psi}.
\]
For the second term, subtracting the two exact adjoint equations yields
\[
    \vb A(\hat{\vb z})\transp
    (\bm\psi^*_{\hat{\vb z},\vb y}-\bm\psi^*_{\hat{\vb z},\vb y^*_{\hat{\vb z}}})
    =
    \grad_{\vb y}F(\hat{\vb z},\vb y)-
    \grad_{\vb y}F(\hat{\vb z},\vb y^*_{\hat{\vb z}}).
\]
Thus, by \Cref{ass:F_smooth_full} and \Cref{lem:state_error_article},
\[
    \norm{\bm\psi^*_{\hat{\vb z},\vb y}-\bm\psi^*_{\hat{\vb z},\vb y^*_{\hat{\vb z}}}}_2
    \le
    C_{\mathcal R}(\hat{\vb z})L_F
    \norm{\vb y-\vb y^*_{\hat{\vb z}}}_2
    \le
    C_{\mathcal R}(\hat{\vb z})^2L_F\tau_{\mathcal R}.
\]
Combining the two estimates in \eqref{eq:adj_err_split_article} completes the proof.
\end{proof}

The estimate in \Cref{lem:adjoint_error_article} separates the adjoint error into two contributions: the displacement of the adjoint-ellipsoid center induced by the inexact state, and the residual error inside the adjoint ellipsoid associated with that inexact state. \Cref{fig:mapeamento_operadores_elipsoides_final} summarizes these two contributions geometrically.

\begin{figure}[htbp]
        \centering
        \resizebox{\textwidth}{!}{%
            \begin{tikzpicture}[
                >=Latex,
                dot/.style={circle, fill, inner sep=1.5pt},
                label text/.style={font=\large\sffamily},
                blob_y/.style={draw=black, thick, fill=gray!10, fill opacity=0.6},
                blob_a/.style={draw=teal!70!black, thick, fill=teal!10, fill opacity=0.5},
                ball_psi/.style={draw=red!70!black, thick, fill=red!5, fill opacity=0.5},
                map_arrow/.style={->, thick, shorten >=2pt, shorten <=1pt},
                mapping_dashed/.style={->, dashed, teal!60!black, thick, shorten >=2pt, shorten <=4pt},
                map_psi_func/.style={->, red!70!black, very thick, shorten >=2pt, shorten <=2pt},
                diff_purple/.style={purple, very thick},
                diff_orange/.style={orange, very thick},
                diff_red/.style={red, thick}]
    
                \draw[thick, rounded corners=2pt] (0,8) rectangle (3,10);
                \node[label text] at (1.5, 10.6) {$\mathcal{Z}$};
                \node[dot, label={right:$\hat{\vb{z}}$}] (zhat) at (1.5, 9.2) {};
            
                \coordinate (YCenter) at (1.5, 4.5);
                \draw[blob_y, rotate around={-15:(YCenter)}] 
                    (YCenter) ellipse [x radius=2.2cm, y radius=1.8cm];
                    
                \node[label text] at (0.25, 6.8) {$\mathcal{Y}_{\hat{\vb{z}}}$};
                \coordinate (YBoundary) at ($(YCenter)+(0, 1.8)$);
            
                \node[dot, label={below left:$\vb{y}_{\hat{\vb{z}}}^\ast$}] (ystar) at (YCenter) {};
                \node[dot, label={left:$\vb{y}$}] (y) at ($(YCenter)+(-0.8, 1.0)$) {}; 
                \draw[thick] (y) -- (ystar);
            
                \draw[map_arrow] (zhat) to [bend right=20] (YBoundary);
            
                \coordinate (ACenter1) at (10, 8);
                \draw[blob_a, rotate around={10:(ACenter1)}] 
                    (ACenter1) ellipse [x radius=2.2cm, y radius=1.4cm];
    
                \node[teal!70!black, font=\large] at ($(ACenter1)+(0.75, 1.8)$) {$\mathcal{A}_{\hat{\vb{z}},\vb{y}}$};
                \coordinate (ABoundary1) at ($(ACenter1)+(-2.1, 0.1)$);
                
                \node[dot, label={above right:$\bm{\psi}_{\hat{\vb{z}},\vb{y}}^\ast$}] (psi_y_star) at (ACenter1) {};
                \node[dot, label={left:$\bm{\psi}$}] (psi_y) at ($(ACenter1)+(-1.1, 0.5)$) {};
                
                \draw[diff_orange] (psi_y) -- (psi_y_star);
            
                \coordinate (ACenter2) at (10, 3);
                \draw[blob_a, rotate around={5:(ACenter2)}] 
                    (ACenter2) ellipse [x radius=2.0cm, y radius=1.3cm];
    
                \node[teal!70!black, font=\large] at ($(ACenter2)+(1.1, 1.53)$) {$\mathcal{A}_{\hat{\vb{z}},\vb{y}_{\hat{\vb{z}}}^\ast}$};
                \coordinate (ABoundary2) at ($(ACenter2)+(-1.9, 0.1)$);
                
                \node[dot, label={below:$\bm{\psi}^\ast_{\hat{\vb{z}},\vb{y}_{\hat{\vb{z}}}^\ast}$}] (psi_ystar) at (ACenter2) {};
            
                \draw[mapping_dashed] (y) to [bend left=15] (ABoundary1);
                \draw[mapping_dashed] (ystar) to [bend right=10] (ABoundary2);
                \draw[diff_orange] (psi_y) -- (psi_ystar);
                \draw[diff_orange] (psi_y_star) -- (psi_ystar);
            
                \node[purple!80!black, font=\large] (norm_label) at (6, 5.8) {$\norm{\bm{\psi} - \bm{\psi}_{\hat{\vb{z}},\vb{y}_{\hat{\vb{z}}}^\ast}}_2$};
                \draw[->, purple!80!black, thick, shorten >=15pt] (norm_label.east) to [bend left=10] ($(psi_y)!0.5!(psi_ystar)$);
            
                \coordinate (RC) at (18, 5.5);
                \draw[thick, ->] ($(RC)+(-3.5,0)$) -- ($(RC)+(3.5,0)$);
                \draw[thick, ->] ($(RC)+(0,-3.5)$) -- ($(RC)+(0,3.5)$);
                \draw[ball_psi] (RC) circle (2.8cm);
                
                \node[dot, inner sep=1.8pt, label={below right:$\bm{0}$}] (r0) at (RC) {};
                \draw[red!70!black, thick] (RC) -- ++(45:2.8cm) node[midway, above, sloped, font=\small] {$\tau_{\bm{\psi}}$};
                \node[dot, red!70!black] (psi_map) at ($(RC)+(-1.00, 1.75)$) {};
            
                \draw[map_psi_func] (psi_y) to [bend left=25] node[midway, above, sloped, font=\small] {$\Psi_{\hat{\vb{z}},\vb{y}}(\bm{\psi})$} (psi_map);
                \draw[map_psi_func] (psi_y_star) to [bend left=12] (r0);
                \draw[map_psi_func] (psi_ystar) to [bend right=15] (r0);
    
            \end{tikzpicture}%
        }
        \caption{Geometric interpretation of \Cref{lem:adjoint_error_article} and of the mapping from admissible adjoint variables to residuals inside a ball of radius $\tau_{\bm{\psi}}$.}
        \label{fig:mapeamento_operadores_elipsoides_final}
    \end{figure}

The inexact adjoint gradient at the fixed design vector $\hat{\vb z}$ is
\begin{equation}
    \label{eq:inexact_grad_article}
    \overline{\grad}\widetilde F(\hat{\vb z})
    =
    \grad_{\vb z}F(\hat{\vb z},\vb y)
    -
    \left[
    \pdv{\vb R}{\vb z}\bigg|_{(\hat{\vb z},\vb y)}
    \right]\transp
    \bm\psi,
    \qquad
    \vb y\in\mathcal Y_{\hat{\vb z}},\quad
    \bm\psi\in\mathcal A_{\hat{\vb z},\vb y}.
\end{equation}
The exact adjoint gradient is
\begin{equation}
    \label{eq:exact_grad_article}
    \grad\widetilde F(\hat{\vb z})
    =
    \grad_{\vb z}F(\hat{\vb z},\vb y^*_{\hat{\vb z}})
    -
    \left[
    \pdv{\vb R}{\vb z}\bigg|_{(\hat{\vb z},\vb y^*_{\hat{\vb z}})}
    \right]\transp
    \bm\psi^*_{\hat{\vb z},\vb y^*_{\hat{\vb z}}}.
\end{equation}

\begin{theorem}[Pointwise reduced-gradient error]
\label{thm:pointwise_gradient_error_article}
Let $\hat{\vb z}\in\RR^m$ be fixed, and let $\tau_{\mathcal R}>0$ and $\tau_{\bm\psi}>0$ be prescribed. Suppose \Cref{ass:A_derivative_lipschitz,ass:A_derivative_bounded,ass:b_derivative_lipschitz,ass:b_derivative_bounded,ass:F_smooth_full} hold. Then, for every $\vb y\in\mathcal Y_{\hat{\vb z}}$ and every $\bm\psi\in\mathcal A_{\hat{\vb z},\vb y}$,
\[
    \norm{
    \overline{\grad}\widetilde F(\hat{\vb z})
    -
    \grad\widetilde F(\hat{\vb z})
    }_2
    \le
    D_{\mathcal R}(\hat{\vb z})\tau_{\mathcal R}
    +
    D_{\bm\psi}(\hat{\vb z})\tau_{\bm\psi},
\]
where
\begin{align*}
    D_{\mathcal R}(\hat{\vb z})
    &=
    L_FC_{\mathcal R}(\hat{\vb z})
    +
    L_F\big(\sqrt m\,C_{\vb A}C_{\mathcal Y_{\hat{\vb z}}}+C_{\vb b}\big)
    C_{\mathcal R}(\hat{\vb z})^2
    +
    \sqrt m\,C_{\vb A}M_F(\hat{\vb z})C_{\mathcal R}(\hat{\vb z})^2,\\
    D_{\bm\psi}(\hat{\vb z})
    &=
    \big(\sqrt m\,C_{\vb A}C_{\mathcal Y_{\hat{\vb z}}}+C_{\vb b}\big)
    C_{\mathcal R}(\hat{\vb z}).
\end{align*}
\end{theorem}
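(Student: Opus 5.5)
Subtract \eqref{eq:exact_grad_article} from \eqref{eq:inexact_grad_article}, abbreviating $\vb y^*=\vb y^*_{\hat{\vb z}}$, $\bm\psi^*=\bm\psi^*_{\hat{\vb z},\vb y^*}$ and $\vb J(\vb y)=\pdv{\vb R}{\vb z}|_{(\hat{\vb z},\vb y)}$. The difference is then split by adding and subtracting the mixed term $\vb J(\vb y)\transp\bm\psi^*$:
\[
    \overline{\grad}\widetilde F(\hat{\vb z})-\grad\widetilde F(\hat{\vb z})
    =
    \big[\grad_{\vb z}F(\hat{\vb z},\vb y)-\grad_{\vb z}F(\hat{\vb z},\vb y^*)\big]
    -\vb J(\vb y)\transp(\bm\psi-\bm\psi^*)
    -\big[\vb J(\vb y)-\vb J(\vb y^*)\big]\transp\bm\psi^*.
\]
Each of the three terms is estimated separately and the results are added with the triangle inequality. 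The Lipschitz Assumptions~\ref{ass:A_derivative_lipschitz} and~\ref{ass:b_derivative_lipschitz} will not be needed, since everything happens at the single design point $\hat{\vb z}$.

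\emph{First term.} The partial gradient $\grad_{\vb z}F$ is a block of $\grad F$. Hence \Cref{ass:F_smooth_full} gives
\[
    \norm{\grad_{\vb z}F(\hat{\vb z},\vb y)-\grad_{\vb z}F(\hat{\vb z},\vb y^*)}_2\le L_F\norm{\vb y-\vb y^*}_2,
\]
and \Cref{lem:state_error_article} bounds the right-hand side by $L_FC_{\mathcal R}(\hat{\vb z})\tau_{\mathcal R}$.

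\emph{Second term.} $\mathcal Y_{\hat{\vb z}}$ is compact, being a closed ellipsoid. Since $\vb y\in\mathcal Y_{\hat{\vb z}}$, \Cref{lem:drdz_bound_article} gives $\norm{\vb J(\vb y)}_2\le\sqrt m\,C_{\vb A}C_{\mathcal Y_{\hat{\vb z}}}+C_{\vb b}$. \Cref{lem:adjoint_error_article} bounds $\norm{\bm\psi-\bm\psi^*}_2$ by $C_{\mathcal R}^2L_F\tau_{\mathcal R}+C_{\mathcal R}\tau_{\bm\psi}$. The product of these two bounds yields the middle summand of $D_{\mathcal R}$ and all of $D_{\bm\psi}$.

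\emph{Third term.} \Cref{lem:drdz_lip_same_z_article} gives
\[
    \norm{\vb J(\vb y)-\vb J(\vb y^*)}_2\le\sqrt m\,C_{\vb A}\norm{\vb y-\vb y^*}_2\le\sqrt m\,C_{\vb A}C_{\mathcal R}\tau_{\mathcal R}.
\]
For $\bm\psi^*$ we use the adjoint equation: $\norm{\bm\psi^*}_2\le\norm{\vb A(\hat{\vb z})^{-\transp}}_2\norm{\grad_{\vb y}F(\hat{\vb z},\vb y^*)}_2\le C_{\mathcal R}M_F(\hat{\vb z})$. Here we use $\norm{\vb A^{-\transp}}_2=\norm{\vb A^{-1}}_2$ and $\vb y^*\in\mathcal Y_{\hat{\vb z}}$. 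The product is $\sqrt m\,C_{\vb A}M_F C_{\mathcal R}^2\tau_{\mathcal R}$, which is the last summand of $D_{\mathcal R}$.

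Adding the three bounds and collecting the coefficients of $\tau_{\mathcal R}$ and $\tau_{\bm\psi}$ reproduces $D_{\mathcal R}(\hat{\vb z})$ and $D_{\bm\psi}(\hat{\vb z})$ exactly.

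The main point needing care is the choice of splitting. The split must pair the bounded factor $\vb J(\vb y)$ with the adjoint error, and the difference $\vb J(\vb y)-\vb J(\vb y^*)$ with the exact adjoint $\bm\psi^*$, whose norm is controlled by $M_F$. The other pairing would require a bound on $\norm{\bm\psi}_2$ for the inexact adjoint, which would add $\tau_{\bm\psi}\tau_{\mathcal R}$ cross terms and not match the stated constants. A minor supporting check is that $M_F$ and $C_{\mathcal Y_{\hat{\vb z}}}$ are finite, which holds because $\mathcal Y_{\hat{\vb z}}$ is compact and $\grad F$ is continuous.
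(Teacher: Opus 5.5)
Your proposal is correct and follows essentially the same route as the paper: the same three-term decomposition obtained by adding and subtracting $\vb J(\vb y)\transp\bm\psi^*$, followed by the same bounds from \Cref{lem:state_error_article}, \Cref{lem:drdz_bound_article} with \Cref{lem:adjoint_error_article}, and \Cref{lem:drdz_lip_same_z_article} with $\norm{\bm\psi^*}_2\le C_{\mathcal R}(\hat{\vb z})M_F(\hat{\vb z})$. You are also right that the Lipschitz assumptions on $\pdv{\vb A}{z_k}$ and $\pdv{\vb b}{\vb z}$ are never used; the paper's argument does not invoke them either.
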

\begin{proof}
Let $\vb y\in\mathcal Y_{\hat{\vb z}}$ and $\bm\psi\in\mathcal A_{\hat{\vb z},\vb y}$. From \eqref{eq:inexact_grad_article} and \eqref{eq:exact_grad_article},
\begin{align}
    \label{eq:grad_error_decomp_article}
    \overline{\grad}\widetilde F(\hat{\vb z})-
    \grad\widetilde F(\hat{\vb z})
    &=
    \grad_{\vb z}F(\hat{\vb z},\vb y)-
    \grad_{\vb z}F(\hat{\vb z},\vb y^*_{\hat{\vb z}})
    \\
    &\quad-
    \left[\pdv{\vb R}{\vb z}\bigg|_{(\hat{\vb z},\vb y)}\right]\transp
    \big(\bm\psi-\bm\psi^*_{\hat{\vb z},\vb y^*_{\hat{\vb z}}}\big)
    \nonumber\\
    &\quad-
    \left(
    \pdv{\vb R}{\vb z}\bigg|_{(\hat{\vb z},\vb y)}
    -
    \pdv{\vb R}{\vb z}\bigg|_{(\hat{\vb z},\vb y^*_{\hat{\vb z}})}
    \right)\transp
    \bm\psi^*_{\hat{\vb z},\vb y^*_{\hat{\vb z}}}.
\end{align}
Taking norms and using the triangle inequality gives
\begin{align}
    \label{eq:grad_error_split_article}
    \norm{\overline{\grad}\widetilde F(\hat{\vb z})-
    \grad\widetilde F(\hat{\vb z})}_2
    &\le
    \norm{\grad_{\vb z}F(\hat{\vb z},\vb y)-
    \grad_{\vb z}F(\hat{\vb z},\vb y^*_{\hat{\vb z}})}_2
    \nonumber\\
    &\quad+
    \norm{\pdv{\vb R}{\vb z}\bigg|_{(\hat{\vb z},\vb y)}}_2
    \norm{\bm\psi-\bm\psi^*_{\hat{\vb z},\vb y^*_{\hat{\vb z}}}}_2
    \nonumber\\
    &\quad+
    \norm{
    \pdv{\vb R}{\vb z}\bigg|_{(\hat{\vb z},\vb y)}
    -
    \pdv{\vb R}{\vb z}\bigg|_{(\hat{\vb z},\vb y^*_{\hat{\vb z}})}
    }_2
    \norm{\bm\psi^*_{\hat{\vb z},\vb y^*_{\hat{\vb z}}}}_2.
\end{align}
The first term is bounded by $L_F\norm{\vb y-\vb y^*_{\hat{\vb z}}}_2$. The second term is bounded by \Cref{lem:drdz_bound_article,lem:adjoint_error_article}. For the third term, \Cref{lem:drdz_lip_same_z_article} gives the Jacobian variation, and the exact adjoint satisfies
\[
    \norm{\bm\psi^*_{\hat{\vb z},\vb y^*_{\hat{\vb z}}}}_2
    \le
    C_{\mathcal R}(\hat{\vb z})M_F(\hat{\vb z}).
\]
Hence
\begin{align}
    \norm{\overline{\grad}\widetilde F(\hat{\vb z})-
    \grad\widetilde F(\hat{\vb z})}_2
    &\le
    \left(L_F+
    \sqrt m\,C_{\vb A}C_{\mathcal R}(\hat{\vb z})M_F(\hat{\vb z})
    \right)
    \norm{\vb y-\vb y^*_{\hat{\vb z}}}_2
    \nonumber\\
    &\quad+
    \big(\sqrt m\,C_{\vb A}C_{\mathcal Y_{\hat{\vb z}}}+C_{\vb b}\big)
    \left(
    C_{\mathcal R}(\hat{\vb z})^2L_F\tau_{\mathcal R}
    +
    C_{\mathcal R}(\hat{\vb z})\tau_{\bm\psi}
    \right).
    \label{eq:grad_error_pre_final_article}
\end{align}
Applying \Cref{lem:state_error_article} to the first term and collecting the coefficients of $\tau_{\mathcal R}$ and $\tau_{\bm\psi}$ gives exactly
\[
    \norm{
    \overline{\grad}\widetilde F(\hat{\vb z})
    -
    \grad\widetilde F(\hat{\vb z})}_2
    \le
    D_{\mathcal R}(\hat{\vb z})\tau_{\mathcal R}
    +
    D_{\bm\psi}(\hat{\vb z})\tau_{\bm\psi}.
\]
\end{proof}

The theorem can also be read as a containment statement for the admissible adjoint-gradient set
\[
    \mathcal G_{\hat{\vb z}}
    =
    \left\{
        \overline{\grad}\widetilde F(\hat{\vb z})
        \;\middle|\;
        \vb y\in\mathcal Y_{\hat{\vb z}},\;
        \bm\psi\in\mathcal A_{\hat{\vb z},\vb y}
    \right\}.
\]
Indeed, \Cref{thm:pointwise_gradient_error_article} states that every element of $\mathcal G_{\hat{\vb z}}$ lies inside a ball centered at the exact reduced gradient $\grad\widetilde F(\hat{\vb z})$, whose radius is linear in $\tau_{\mathcal R}$ and $\tau_{\bm\psi}$. This geometric interpretation is shown in \Cref{fig:resumo_teorema_final_ajustado}.

\begin{figure}[htbp]
        \centering
        \resizebox{0.75\textwidth}{!}{%
            \begin{tikzpicture}[
                >=Latex,
                box/.style={draw=black, thick, minimum width=2.0cm, minimum height=2.5cm, rounded corners=2pt},
                dot/.style={circle, fill, inner sep=1.5pt},
                label text/.style={font=\large\sffamily},
                set_y/.style={draw=purple!70!black, thick, fill=purple!10, fill opacity=0.6},
                set_a/.style={draw=teal!70!black, thick, fill=teal!10, fill opacity=0.5},
                cloud_g/.style={draw=orange!80!black, thick, fill=orange!20, fill opacity=0.4, smooth cycle, tension=0.8},
                bound_circle/.style={draw=green!60!black, ultra thick},
                mapping/.style={->, dashed, blue!60!black, thick, shorten >=1pt, shorten <=3pt},
                joint_line/.style={dashed, blue!60!black, thick, shorten <=3pt}
            ]
    
                \node[box] (ZBox) at (0,0) {};
                \node[label text, anchor=north] at (ZBox.south) {$\mathcal{Z}$};
                \node[dot, label={below:$\hat{\vb{z}}$}] (zhat) at (ZBox.center) {};
    
                \coordinate (YCenter) at (1, 5);
                \draw[set_y, rotate around={-20:(YCenter)}] 
                    (YCenter) ellipse [x radius=1.2cm, y radius=0.7cm];
                \node[purple!70!black, font=\large] at ($(YCenter)+(0, 1.1)$) {$\mathcal{Y}_{\hat{\vb{z}}}$};
                \node[dot, label={left:$\vb{y}$}] (y) at ($(YCenter)+(-0.3, 0.2)$) {};
                \coordinate (YBoundaryTarget) at ($(YCenter)+(-0.65, -0.6)$);
    
                \coordinate (ACenter) at (5, 3.5);
                \draw[set_a, rotate around={15:(ACenter)}] 
                    (ACenter) ellipse [x radius=1.5cm, y radius=0.6cm];
                \node[teal!70!black, font=\large] at ($(ACenter)+(0, -1.0)$) {$\mathcal{A}_{\hat{\vb{z}},\vb{y}}$};
                \node[dot, label={left:$\bm{\psi}$}] (psi) at ($(ACenter)+(0.4, 0.1)$) {};
                \coordinate (ABoundaryTargetFromZ) at ($(ACenter)+(-1.45, -0.15)$);
                \coordinate (ABoundaryTargetFromY) at ($(ACenter)+(-1.1, 0.45)$);
    
                \coordinate (GOrigin) at (10, 3);
                
                \draw[thick, ->] ($(GOrigin)+(-1.5, 0)$) -- ($(GOrigin)+(4.5, 0)$);
                \draw[thick, ->] ($(GOrigin)+(0, -1.5)$) -- ($(GOrigin)+(0, 4.5)$);
    
                \coordinate (GStarTip) at ($(GOrigin)+(3.0, 3.0)$);
                \coordinate (GInexTip) at ($(GStarTip)+(-0.6, 0.4)$);
    
                \draw[cloud_g] plot coordinates {
                    ($(GStarTip)+(-1.1, 0.7)$) ($(GStarTip)+(0.8, 1.1)$) 
                    ($(GStarTip)+(1.5, -0.5)$) ($(GStarTip)+(0.3, -1.4)$) 
                    ($(GStarTip)+(-1.3, -0.8)$)
                };
                \node[orange!80!black, font=\Large\boldmath, anchor=north west] at ($(GStarTip)+(-0.1, 1.80)$) {$\mathcal{G}_{\hat{\vb{z}}}$};
    
                \def\R{2.1}
                \draw[bound_circle] (GStarTip) circle (\R);
                \draw[green!60!black, ultra thick] (GStarTip) -- ++(45:\R);
    
                
                \draw[->, ultra thick, black] (GOrigin) -- (GStarTip);
                \node[dot, black] at (GStarTip) {};
                \node[font=\small, anchor=north west] at ($(GStarTip)+(-0.1, 0.1)$) {$\grad \widetilde{F}(\hat{\vb{z}})$};
    
                \draw[->, ultra thick, black] (GOrigin) -- (GInexTip);
                \node[dot, black, inner sep=1pt] at (GInexTip) {};
                \node[font=\scriptsize, black, anchor=south west] at ($(GInexTip)+(-0.25,-0.1)$) {$\overline{\grad} \widetilde{F}(\hat{\vb{z}})$};
    
                \draw[mapping] (zhat) to [bend left=20] (YBoundaryTarget);
                \draw[mapping] (zhat) to [bend right=15] (ABoundaryTargetFromZ);
                \draw[mapping] (y) to [bend left=15] (ABoundaryTargetFromY);
                \coordinate (Junction) at (7.5, 4.5);
                \draw[joint_line] (y) to [bend left=10] (Junction);
                \draw[joint_line] (psi) to [bend right=10] (Junction);
                \draw[mapping, shorten <=0pt] (Junction) to [bend left=5] (GInexTip);
    
            \end{tikzpicture}%
        }
        \caption{Geometric interpretation of \Cref{thm:pointwise_gradient_error_article} and of the admissible adjoint-gradient set~$\mathcal G_{\hat{\vb z}}$.}
        \label{fig:resumo_teorema_final_ajustado}
    \end{figure}

\subsection{Uniform estimate and directional tolerance}
\label{subsec:uniform_error}

The estimate in \Cref{thm:pointwise_gradient_error_article} is pointwise because the constants depend on the design vector. To obtain a uniform bound, let $\mathcal Z_c\subset\RR^m$ be a nonempty compact set of decision variables. Since matrix inversion is continuous on $\GL(n)$ and $\vb A$ is continuous, the mapping $\vb z\mapsto C_{\mathcal R}(\vb z)=\norm{\vb A(\vb z)^{-1}}_2$ is continuous. Hence
\begin{equation}
    \label{eq:CR_bar_article}
    \overline C_{\mathcal R}
    :=
    \sup_{\vb z\in\mathcal Z_c}C_{\mathcal R}(\vb z)<\infty.
\end{equation}
Similarly, the admissible state sets remain uniformly bounded over $\mathcal Z_c$; define
\begin{equation}
    \label{eq:CY_bar_article}
    \overline C_{\mathcal Y}
    :=
    \sup_{\vb z\in\mathcal Z_c}C_{\mathcal Y_{\vb z}}<\infty.
\end{equation}
Thus, all admissible-state ellipsoids associated with $\vb z\in\mathcal Z_c$ can be enclosed in a single ball centered at the origin, with radius $R_{\mathcal Y}:=\overline C_{\mathcal Y}$:
\begin{equation}
    \label{eq:uniform_state_ball_article}
    \mathcal Y_{\vb z}
    \subset
    \mathbb B_{R_{\mathcal Y}}[\vb 0],
    \qquad \vb z\in\mathcal Z_c.
\end{equation}
The geometric interpretation is that the pointwise ellipsoids do not escape to infinity as $\vb z$ varies over the compact set; see \Cref{fig:uniform_state_envelope_article}.

\begin{figure}[htbp]
    \centering
    \resizebox{\textwidth}{!}{%
        \begin{tikzpicture}[
            >=Latex,
            box_base/.style={draw=black, thick, minimum height=12.0cm, rounded corners=2pt},
            box_z/.style={box_base, minimum width=10.5cm},
            box_y/.style={box_base, minimum width=11.5cm},
            box_r/.style={box_base, minimum width=9.5cm},
            dot/.style={circle, fill, inner sep=1.8pt},
            label text/.style={font=\Large\sffamily},
            blob_style/.style={fill opacity=0.7, thick, text opacity=1},
            z_region/.style={fill=gray!15, draw=black!90, thick, smooth cycle, tension=0.7},
            env_circle_style/.style={draw=black!70, thick, fill=gray!10},
            map_z_y/.style={->, blue!80!black, ultra thick, shorten >=2pt, shorten <=2pt},
            map_y_r/.style={->, green!60!black, very thick, shorten >=2pt, shorten <=2pt},
            map_opt/.style={->, blue!60!black, dashed, very thick, shorten >=2pt, shorten <=2pt}
        ]
            \node[box_z] (ZBox) at (0,0) {};
            \node[label text, font=\huge] at ($(ZBox.center)+(0, 6.5)$) {$\mathcal Z$};
            \draw[z_region] plot coordinates {(-4.2,1.2) (-1.5,4.2) (3.5,2.8) (4.2,-1.2) (2.0,-4.2) (-3.5,-3.2)};
            \node[font=\huge, anchor=south] at (0, 4.3) {$\mathcal Z_c$};
            \node[dot, opacity=0.5] (z1) at ($(ZBox.center)+(-2.8, 2.2)$) {};
            \node[dot, opacity=0.5] (z2) at ($(ZBox.center)+(-2.5, -2.5)$) {};
            \node[dot, opacity=0.4] (z3) at ($(ZBox.center)+(1.8, -2.5)$) {};
            \node[dot, label={[font=\large]above:$\vb z$}] (zi) at ($(ZBox.center)+(2.0, 1.5)$) {};

            \node[box_y, right=1.2cm of ZBox] (YBox) {};
            \node[label text, font=\huge] at ($(YBox.center)+(0, 6.5)$) {$\mathcal Y$};
            \coordinate (YC) at (YBox.center);
            \def\RadY{4.72}
            \draw[env_circle_style] (YC) circle (\RadY cm);
            \draw[ultra thick, black!70] (YC) -- node[midway, above left, inner sep=0.5pt, font=\large] {$R_{\mathcal Y}$} ++(215:\RadY cm);

            \foreach \idx/\yx/\yy/\col/\rot/\zsource/\dir/\ang in {
                1/-0.8/ 3.2/purple/15/z1/left/12,
                2/-3.0/ 0.3/orange/20/z2/left/20,
                3/ 0.8/-3.2/magenta/-10/z3/right/15
            } {
                \begin{scope}[shift={($(YC)+(\yx,\yy)$)}, rotate=\rot]
                    \draw[blob_style, fill=\col!30, draw=\col!80!black] (0,0) ellipse (1.4 and 0.7);
                    \coordinate (edge\idx) at (-1.4, 0.0);
                    \node[dot] at (0,0) {};
                    \node[dot] at (0.6, -0.2) {};
                    \draw[black!70, thick] (0,0) -- (0.6, -0.2);
                \end{scope}
                \draw[map_z_y, opacity=0.25] (\zsource) to [bend \dir=\ang] (edge\idx);
            }

            \coordinate (PosI) at ($(YC)+(2.8, 1.4)$);
            \begin{scope}[shift={(PosI)}, rotate=35]
                \draw[blob_style, fill=teal!30, draw=teal!80!black] (0,0) ellipse (1.6 and 0.9);
                \coordinate (edgeI) at (-1.6, 0.0);
                \node[dot, label={[font=\Large]above:$\vb y_{\vb z}^\ast$}] (yistar) at (0,0) {};
                \node[dot, label={[font=\Large]below left:$\vb y$}] (yi) at (-0.65, -0.15) {};
                \draw[black!70, thick] (yi) -- (yistar);
            \end{scope}
            \node[teal!90!black, font=\Large] at ($(PosI)+(0.3, 1.6)$) {$\mathcal Y_{\vb z}$};
            \draw[map_z_y] (zi) to [bend left=12] (edgeI);

            \coordinate (RC) at ($(YBox.east)+(5.5,0)$);
            \draw[thick, ->] ($(RC)+(-4,0)$) -- ($(RC)+(4,0)$);
            \draw[thick, ->] ($(RC)+(0,-4)$) -- ($(RC)+(0,4)$);
            \draw[fill=red!20, fill opacity=0.5, draw=red!80!black, thick] (RC) circle (3.2cm);
            \node[dot, inner sep=1.8pt, label={[font=\Large]above right:$\vb 0$}] (r0) at (RC) {};
            \node[dot, teal!80!black, label={[font=\Large]below:$\vb r$}] (ri) at ($(RC)+(-1.4, -1.2)$) {};
            \draw[map_y_r] (yi) to [bend left=25] node[pos=0.56, above, sloped, font=\Large] {$\vb r=\Phi_{\vb z}(\vb y)$} (ri);
            \draw[map_y_r] (ri) to [bend left=25] node[pos=0.32, below, sloped, font=\Large] {$\vb y=\Phi_{\vb z}^{-1}(\vb r)$} (yi);
            \draw[map_opt] (yistar) to [bend left=35] (r0);
        \end{tikzpicture}%
    }
    \caption{Uniform confinement of the admissible-state ellipsoids $\mathcal Y_{\vb z}$, with $\vb z\in\mathcal Z_c$, inside a single ball of radius $R_{\mathcal Y}$ centered at the origin.}
    \label{fig:uniform_state_envelope_article}
\end{figure}

Moreover, by continuity of $\grad_{\vb y}F$ on the compact set generated by $\mathcal Z_c$ and the uniformly bounded admissible states, define
\begin{equation}
    \label{eq:MF_bar_article}
    \overline M_F
    :=
    \sup_{\vb z\in\mathcal Z_c}M_F(\vb z)<\infty.
\end{equation}
Consequently, \Cref{thm:pointwise_gradient_error_article} implies the uniform bound
\begin{equation}
    \label{eq:uniform_grad_error_article}
    \norm{
    \overline{\grad}\widetilde F(\vb z)
    -
    \grad\widetilde F(\vb z)}_2
    \le
    \overline D_{\mathcal R}\tau_{\mathcal R}
    +
    \overline D_{\bm\psi}\tau_{\bm\psi},
    \qquad \vb z\in\mathcal Z_c,
\end{equation}
where
\begin{align}
    \label{eq:uniform_D_constants_article}
    \overline D_{\bm\psi}
    &=
    \big(\sqrt m\,C_{\vb A}\overline C_{\mathcal Y}+C_{\vb b}\big)
    \overline C_{\mathcal R},
    \\
    \overline D_{\mathcal R}
    &=
    L_F\overline C_{\mathcal R}
    +
    L_F\big(\sqrt m\,C_{\vb A}\overline C_{\mathcal Y}+C_{\vb b}\big)
    \overline C_{\mathcal R}^2
    +
    \sqrt m\,C_{\vb A}\overline M_F\overline C_{\mathcal R}^2.
\end{align}

This uniform estimate provides a direct bridge to descent theory. Let $\zeta>0$ be a desired relative gradient-error parameter and choose any $\omega\in(0,1)$. If
\begin{equation}
    \label{eq:tau_directional_choices_article}
    \tau_{\mathcal R}=\gamma_1\norm{\overline{\grad}\widetilde F(\vb z)}_2,
    \qquad
    \tau_{\bm\psi}=\gamma_2\norm{\overline{\grad}\widetilde F(\vb z)}_2,
\end{equation}
with
\begin{equation}
    \label{eq:gamma_choices_article}
    \gamma_1\in\left(0,\frac{\omega\zeta}{\overline D_{\mathcal R}}\right],
    \qquad
    \gamma_2\in\left(0,\frac{(1-\omega)\zeta}{\overline D_{\bm\psi}}\right],
\end{equation}
then \eqref{eq:uniform_grad_error_article} gives
\begin{align}
    \norm{\overline{\grad}\widetilde F(\vb z)-\grad\widetilde F(\vb z)}_2
    &\le
    (\overline D_{\mathcal R}\gamma_1+\overline D_{\bm\psi}\gamma_2)
    \norm{\overline{\grad}\widetilde F(\vb z)}_2
    \nonumber\\
    &\le
    \zeta\norm{\overline{\grad}\widetilde F(\vb z)}_2.
    \label{eq:directional_error_from_adjoint_article}
\end{align}
Thus, the adjoint error analysis supplies sufficient conditions under which the inexact adjoint gradient satisfies the directional relative-error model used in the optimization analysis below.

\section{An inexact general direction method with directional tolerances}
\label{sec:igdmd}

Consider the unconstrained reduced optimization problem
\begin{equation}
    \label{prob:opt_article}
    \min_{\vb z\in\RR^m}\ \widetilde F(\vb z),
\end{equation}
where $\widetilde F:\RR^m\to\RR$ is differentiable. The exact gradient $\grad\widetilde F$ is assumed unavailable, and only an inexact counterpart $\overline{\grad}\widetilde F$ can be computed. The sequence of iterates is defined by
\begin{equation}
    \label{eq:igdm_d_rule_article}
    \vb z_{k+1}=\vb z_k+t_k\vb s_k,
    \qquad k\in\NN_0,
\end{equation}
where $\vb z_0\in\RR^m$ and $(t_k)_{k\in\NN_0}$ is a positive sequence of step sizes.

\begin{assumption}[Inexact general directions]
\label{ass:directions_article}
There exist constants $c_1',c_2'>0$ such that
\begin{align}
    c_1'\norm{\overline{\grad}\widetilde F(\vb z_k)}_2^2
    &\le
    -\overline{\grad}\widetilde F(\vb z_k)\transp\vb s_k,
    \label{eq:gdicond_a_article}
    \\
    \norm{\vb s_k}_2
    &\le
    c_2'\norm{\overline{\grad}\widetilde F(\vb z_k)}_2,
    \label{eq:gdicond_b_article}
\end{align}
for every $k\in\NN_0$.
\end{assumption}

\begin{assumption}[Directional error tolerance]
\label{ass:dir_tol_article}
Let $\zeta\in[0,c_1'/c_2')$. The inexact gradient satisfies
\begin{equation}
    \label{eq:dir_tol_article}
    \norm{\vb e_k}_2
    \le
    \zeta\norm{\overline{\grad}\widetilde F(\vb z_k)}_2,
    \qquad k\in\NN_0,
\end{equation}
where
\begin{equation}
    \label{eq:e_k_article}
    \vb e_k
    =
    \overline{\grad}\widetilde F(\vb z_k)-\grad\widetilde F(\vb z_k).
\end{equation}
\end{assumption}

\begin{proposition}[Comparison between exact and inexact gradients]
\label{prop:comp_grad_article}
If \Cref{ass:dir_tol_article} holds, then
\[
    \frac{1}{1+\zeta}\norm{\grad\widetilde F(\vb z_k)}_2
    \le
    \norm{\overline{\grad}\widetilde F(\vb z_k)}_2
    \le
    \frac{1}{1-\zeta}\norm{\grad\widetilde F(\vb z_k)}_2,
    \qquad k\in\NN_0.
\]
\end{proposition}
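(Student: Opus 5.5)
The plan is to apply the triangle inequality to the decomposition $\grad\widetilde F(\vb z_k)=\overline{\grad}\widetilde F(\vb z_k)-\vb e_k$ from \eqref{eq:e_k_article}, in both directions, and then bound $\norm{\vb e_k}_2$ by the directional tolerance \eqref{eq:dir_tol_article}. Only \Cref{ass:dir_tol_article} is needed.

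For the left inequality, we write $\grad\widetilde F(\vb z_k)=\overline{\grad}\widetilde F(\vb z_k)-\vb e_k$. This gives
\[
    \norm{\grad\widetilde F(\vb z_k)}_2\le\norm{\overline{\grad}\widetilde F(\vb z_k)}_2+\norm{\vb e_k}_2\le(1+\zeta)\norm{\overline{\grad}\widetilde F(\vb z_k)}_2 .
\]
Dividing by $1+\zeta>0$ yields the lower bound.

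For the right inequality, we use the reverse decomposition $\overline{\grad}\widetilde F(\vb z_k)=\grad\widetilde F(\vb z_k)+\vb e_k$. This gives
\[
    \norm{\overline{\grad}\widetilde F(\vb z_k)}_2\le\norm{\grad\widetilde F(\vb z_k)}_2+\zeta\norm{\overline{\grad}\widetilde F(\vb z_k)}_2 .
\]
Rearranging, we obtain $(1-\zeta)\norm{\overline{\grad}\widetilde F(\vb z_k)}_2\le\norm{\grad\widetilde F(\vb z_k)}_2$.

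The only delicate point is dividing by $1-\zeta$, which requires $\zeta<1$. I will take this from the standing restriction $\zeta\in[0,c_1'/c_2')$ together with $c_1'\le c_2'$. That inequality follows from \Cref{ass:directions_article} and Cauchy--Schwarz whenever $\overline{\grad}\widetilde F(\vb z_k)\neq\vb 0$:
\[
    c_1'\norm{\overline{\grad}\widetilde F(\vb z_k)}_2^2\le\norm{\overline{\grad}\widetilde F(\vb z_k)}_2\norm{\vb s_k}_2\le c_2'\norm{\overline{\grad}\widetilde F(\vb z_k)}_2^2 .
\]
If $\overline{\grad}\widetilde F(\vb z_k)=\vb 0$ for every $k$, this argument does not apply. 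In that case I will either read $\zeta<1$ as implicit in the proposition, since the upper bound is meaningless otherwise, or handle it directly. Indeed, if $\overline{\grad}\widetilde F(\vb z_k)=\vb 0$, the tolerance forces $\vb e_k=\vb 0$, so $\grad\widetilde F(\vb z_k)=\vb 0$ and both inequalities hold trivially as $0\le0\le0$.
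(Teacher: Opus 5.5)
Your proof is correct and is essentially the paper's argument: you apply the triangle inequality to $\grad\widetilde F(\vb z_k)=\overline{\grad}\widetilde F(\vb z_k)-\vb e_k$ in both directions, bound $\norm{\vb e_k}_2$ by the tolerance, and rearrange. Your extra justification that $\zeta<1$ (via $c_1'\le c_2'$ from Cauchy--Schwarz applied to \Cref{ass:directions_article}, with the all-zero case treated separately) fills in a point the paper leaves implicit here; the paper invokes it only later, in \Cref{prop:descent_from_relative_error_article}, where it writes $\zeta<c_1'/c_2'\le 1$.
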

\begin{proof}
From \eqref{eq:e_k_article},
\begin{equation}
    \label{eq:exact_inexact_relation_article}
    \grad\widetilde F(\vb z_k)
    =
    \overline{\grad}\widetilde F(\vb z_k)-\vb e_k.
\end{equation}
Taking norms and using the triangle inequality with \Cref{ass:dir_tol_article} gives
\[
    \norm{\grad\widetilde F(\vb z_k)}_2
    \le
    \norm{\overline{\grad}\widetilde F(\vb z_k)}_2+
    \norm{\vb e_k}_2
    \le
    (1+\zeta)\norm{\overline{\grad}\widetilde F(\vb z_k)}_2,
\]
which proves the lower bound. Conversely,
\[
    \norm{\overline{\grad}\widetilde F(\vb z_k)}_2
    \le
    \norm{\grad\widetilde F(\vb z_k)}_2+
    \norm{\vb e_k}_2
    \le
    \norm{\grad\widetilde F(\vb z_k)}_2+
    \zeta\norm{\overline{\grad}\widetilde F(\vb z_k)}_2.
\]
Rearranging gives the upper bound.
\end{proof}

\begin{corollary}
\label{cor:ex_inex_zero_article}
If \Cref{ass:dir_tol_article} holds and
\[
    \lim_{k\to\infty}\grad\widetilde F(\vb z_k)=\vb{0},
\]
then
\[
    \lim_{k\to\infty}\overline{\grad}\widetilde F(\vb z_k)=\vb{0}.
\]
\end{corollary}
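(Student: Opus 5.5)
The plan is to read the corollary off the upper bound in \Cref{prop:comp_grad_article}. That proposition only uses \Cref{ass:dir_tol_article}, which fixes $\zeta\in[0,c_1'/c_2')$. The one point to check is that the factor $1/(1-\zeta)$ is finite and positive, i.e.\ that $\zeta<1$. By \eqref{eq:gdicond_a_article}, Cauchy--Schwarz and \eqref{eq:gdicond_b_article},
\[
    c_1'\norm{\overline{\grad}\widetilde F(\vb z_k)}_2^2
    \le
    \norm{\overline{\grad}\widetilde F(\vb z_k)}_2\norm{\vb s_k}_2
    \le
    c_2'\norm{\overline{\grad}\widetilde F(\vb z_k)}_2^2 .
\]
So $c_1'\le c_2'$ whenever some inexact gradient is nonzero, and then $\zeta<c_1'/c_2'\le1$. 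If every $\overline{\grad}\widetilde F(\vb z_k)$ vanishes, the conclusion holds trivially.

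A cleaner route, which I would state as the main argument, avoids relying on \Cref{ass:directions_article} at all. Rearranging the triangle inequality in the proof of \Cref{prop:comp_grad_article} gives $(1-\zeta)\norm{\overline{\grad}\widetilde F(\vb z_k)}_2\le\norm{\grad\widetilde F(\vb z_k)}_2$. Whenever $\zeta<1$, which is guaranteed by the range $[0,c_1'/c_2')$ as just argued, we can divide by $1-\zeta>0$ and obtain
\[
    0\le\norm{\overline{\grad}\widetilde F(\vb z_k)}_2\le\frac{1}{1-\zeta}\norm{\grad\widetilde F(\vb z_k)}_2 .
\]

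Finally, the hypothesis $\grad\widetilde F(\vb z_k)\to\vb 0$ means the right-hand side tends to zero. The squeeze theorem then gives $\norm{\overline{\grad}\widetilde F(\vb z_k)}_2\to0$, that is, $\overline{\grad}\widetilde F(\vb z_k)\to\vb 0$.

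The only real obstacle is justifying $\zeta<1$ so that the constant in \Cref{prop:comp_grad_article} is meaningful. I would handle it through the comparison $c_1'\le c_2'$ above, with the trivial case of identically zero inexact gradients treated separately.
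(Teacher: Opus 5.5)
Your proposal is correct and takes the paper's route: the paper's proof also combines the upper bound $\norm{\overline{\grad}\widetilde F(\vb z_k)}_2\le(1-\zeta)^{-1}\norm{\grad\widetilde F(\vb z_k)}_2$ from \Cref{prop:comp_grad_article} with the squeeze theorem. Your check that $\zeta<1$, via $c_1'\le c_2'$ from \Cref{ass:directions_article} with the all-zero case treated separately, is a detail the paper leaves implicit; note that it means your ``cleaner route'' still relies on \Cref{ass:directions_article}, contrary to your claim that it avoids it.
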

\begin{proof}
The result follows directly from the upper bound in \Cref{prop:comp_grad_article} and the squeeze theorem.
\end{proof}

\begin{proposition}[Descent direction from relative error]
\label{prop:descent_from_relative_error_article}
Suppose $\overline{\grad}\widetilde F(\vb z)\ne\vb{0}$ and \Cref{ass:dir_tol_article} holds. Then
\[
    -\grad\widetilde F(\vb z)\transp
    \overline{\grad}\widetilde F(\vb z)<0.
\]
Thus, $-\overline{\grad}\widetilde F(\vb z)$ is a descent direction for $\widetilde F$ at $\vb z$.
\end{proposition}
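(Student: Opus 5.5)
We plan to write the exact gradient as $\grad\widetilde F(\vb z)=\overline{\grad}\widetilde F(\vb z)-\vb e$, as in \eqref{eq:exact_inexact_relation_article}, with $\vb e$ the error vector from \eqref{eq:e_k_article} evaluated at $\vb z$. Expanding the inner product then gives
\[
    \grad\widetilde F(\vb z)\transp\overline{\grad}\widetilde F(\vb z)
    =
    \norm{\overline{\grad}\widetilde F(\vb z)}_2^2-\vb e\transp\overline{\grad}\widetilde F(\vb z).
\]
Next we bound the cross term by Cauchy--Schwarz together with \Cref{ass:dir_tol_article}:
\[
    \vb e\transp\overline{\grad}\widetilde F(\vb z)\le\norm{\vb e}_2\norm{\overline{\grad}\widetilde F(\vb z)}_2\le\zeta\norm{\overline{\grad}\widetilde F(\vb z)}_2^2.
\]
Combining the two displays yields
\[
    \grad\widetilde F(\vb z)\transp\overline{\grad}\widetilde F(\vb z)\ge(1-\zeta)\norm{\overline{\grad}\widetilde F(\vb z)}_2^2.
\]

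The only point needing care is that $1-\zeta>0$. \Cref{ass:dir_tol_article} gives only $\zeta<c_1'/c_2'$. However, applying \Cref{ass:directions_article} with Cauchy--Schwarz gives
\[
    c_1'\norm{\overline{\grad}\widetilde F}_2^2\le\norm{\overline{\grad}\widetilde F}_2\norm{\vb s_k}_2\le c_2'\norm{\overline{\grad}\widetilde F}_2^2 .
\]
Hence $c_1'\le c_2'$ whenever some iterate has nonzero inexact gradient, and therefore $\zeta<1$. We will argue this way, or, if the proposition is meant for a generic $\vb z$ outside the iteration, state explicitly that $\zeta\in[0,1)$, as the paper already implicitly requires in \Cref{prop:comp_grad_article}. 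Since $\overline{\grad}\widetilde F(\vb z)\ne\vb 0$, we then get
\[
    (1-\zeta)\norm{\overline{\grad}\widetilde F(\vb z)}_2^2>0,
\]
which is the claimed strict inequality $-\grad\widetilde F(\vb z)\transp\overline{\grad}\widetilde F(\vb z)<0$.

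Finally, the descent claim follows from the standard first-order argument. The directional derivative of $\widetilde F$ at $\vb z$ along $-\overline{\grad}\widetilde F(\vb z)$ is negative, so by differentiability $\widetilde F(\vb z-t\overline{\grad}\widetilde F(\vb z))<\widetilde F(\vb z)$ for all sufficiently small $t>0$. The main obstacle is only the justification that $\zeta<1$; everything else is a direct calculation.
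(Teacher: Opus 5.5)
Your proof is correct and follows the paper's argument: the same splitting of the exact gradient, Cauchy--Schwarz to get $\grad\widetilde F(\vb z)\transp\overline{\grad}\widetilde F(\vb z)\ge(1-\zeta)\norm{\overline{\grad}\widetilde F(\vb z)}_2^2$, and strict positivity from $\zeta<1$. The paper simply asserts $c_1'/c_2'\le 1$ without proof. Your derivation of $c_1'\le c_2'$ from the inexact-directions assumption, via Cauchy--Schwarz at an iterate with nonzero inexact gradient, supplies exactly the justification the paper omits.
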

\begin{proof}
Using
\[
    \grad\widetilde F(\vb z)
    =
    \overline{\grad}\widetilde F(\vb z)
    +
    \big(\grad\widetilde F(\vb z)-\overline{\grad}\widetilde F(\vb z)\big)
\]
and Cauchy--Schwarz,
\begin{align}
    \overline{\grad}\widetilde F(\vb z)\transp\grad\widetilde F(\vb z)
    &=
    \norm{\overline{\grad}\widetilde F(\vb z)}_2^2
    +
    \overline{\grad}\widetilde F(\vb z)\transp
    \big(\grad\widetilde F(\vb z)-\overline{\grad}\widetilde F(\vb z)\big)
    \nonumber\\
    &\ge
    \norm{\overline{\grad}\widetilde F(\vb z)}_2^2
    -
    \norm{\overline{\grad}\widetilde F(\vb z)}_2
    \norm{\grad\widetilde F(\vb z)-\overline{\grad}\widetilde F(\vb z)}_2
    \nonumber\\
    &\ge
    (1-\zeta)\norm{\overline{\grad}\widetilde F(\vb z)}_2^2.
\end{align}
Since $\zeta<c_1'/c_2'\le 1$ and $\overline{\grad}\widetilde F(\vb z)\ne\vb{0}$, the right-hand side is strictly positive. Multiplying by $-1$ gives the result.
\end{proof}

\begin{theorem}[Conversion of inexact directions into exact general directions]
\label{thm:inex_to_ex_gdm_article}
Let $\widetilde F:\RR^m\to\RR$ be differentiable and let $(\vb z_k)_{k\in\NN_0}$ be generated by \eqref{eq:igdm_d_rule_article}. Suppose \Cref{ass:directions_article,ass:dir_tol_article} hold. Then $(\vb s_k)_{k\in\NN_0}$ satisfies
\begin{align}
    c_1\norm{\grad\widetilde F(\vb z_k)}_2^2
    &\le
    -\grad\widetilde F(\vb z_k)\transp\vb s_k,
    \qquad k\in\NN_0,
    \label{eq:gdm_exact_1_article}
    \\
    \norm{\vb s_k}_2
    &\le
    c_2\norm{\grad\widetilde F(\vb z_k)}_2,
    \qquad k\in\NN_0,
    \label{eq:gdm_exact_2_article}
\end{align}
where
\[
    c_1=\frac{c_1'-\zeta c_2'}{(1+\zeta)^2},
    \qquad
    c_2=\frac{c_2'}{1-\zeta}.
\]
\end{theorem}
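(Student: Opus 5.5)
The plan is to treat the two inequalities separately. Each one transfers the corresponding inexact condition of \Cref{ass:directions_article} to the exact gradient, using the decomposition \eqref{eq:exact_inexact_relation_article}, the directional tolerance \eqref{eq:dir_tol_article}, and the norm comparisons of \Cref{prop:comp_grad_article}. Throughout I abbreviate $\bar g_k=\overline{\grad}\widetilde F(\vb z_k)$ and $g_k=\grad\widetilde F(\vb z_k)$. I will also use that $\zeta<c_1'/c_2'$. Cauchy--Schwarz applied to \eqref{eq:gdicond_a_article} and \eqref{eq:gdicond_b_article} gives $c_1'\le c_2'$ whenever $\bar g_k\ne\vb 0$, so $\zeta<1$ and $c_1'-\zeta c_2'>0$.

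The second inequality \eqref{eq:gdm_exact_2_article} is immediate. By \eqref{eq:gdicond_b_article}, $\norm{\vb s_k}_2\le c_2'\norm{\bar g_k}_2$. The upper bound in \Cref{prop:comp_grad_article} gives $\norm{\bar g_k}_2\le(1-\zeta)^{-1}\norm{g_k}_2$. Combining the two yields $c_2=c_2'/(1-\zeta)$.

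For the first inequality \eqref{eq:gdm_exact_1_article}, I will write $g_k=\bar g_k-\vb e_k$. This gives
\[
    -g_k\transp\vb s_k=-\bar g_k\transp\vb s_k+\vb e_k\transp\vb s_k
    \ge c_1'\norm{\bar g_k}_2^2-\norm{\vb e_k}_2\norm{\vb s_k}_2 .
\]
Here I use \eqref{eq:gdicond_a_article} for the first term and Cauchy--Schwarz for the second. Inserting \eqref{eq:dir_tol_article} and \eqref{eq:gdicond_b_article} bounds the error term by $\zeta c_2'\norm{\bar g_k}_2^2$, so $-g_k\transp\vb s_k\ge(c_1'-\zeta c_2')\norm{\bar g_k}_2^2$. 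Finally, the lower bound in \Cref{prop:comp_grad_article}, $\norm{\bar g_k}_2\ge(1+\zeta)^{-1}\norm{g_k}_2$, converts this to $c_1\norm{g_k}_2^2$ with $c_1=(c_1'-\zeta c_2')/(1+\zeta)^2$.

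The only delicate point is this last step. Replacing $\norm{\bar g_k}_2^2$ by a lower bound is legitimate only because the coefficient $c_1'-\zeta c_2'$ is nonnegative, and that is exactly where $\zeta<c_1'/c_2'$ is used. I will also note the degenerate case $\bar g_k=\vb 0$. Then \eqref{eq:dir_tol_article} forces $\vb e_k=\vb 0$, hence $g_k=\vb 0$, and \eqref{eq:gdicond_b_article} forces $\vb s_k=\vb 0$, so both inequalities hold trivially.
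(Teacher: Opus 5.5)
Your proposal is correct and follows the paper's proof essentially step for step: the same splitting $-\grad\widetilde F(\vb z_k)\transp\vb s_k=-\overline{\grad}\widetilde F(\vb z_k)\transp\vb s_k+\vb e_k\transp\vb s_k$, the same Cauchy--Schwarz bound $\zeta c_2'\norm{\overline{\grad}\widetilde F(\vb z_k)}_2^2$ on the error term, and the two sides of \Cref{prop:comp_grad_article} to pass to the exact gradient. Your observation that Cauchy--Schwarz forces $c_1'\le c_2'$ (hence $\zeta<1$, which $c_2$ and the upper bound in \Cref{prop:comp_grad_article} require) and your handling of the case $\overline{\grad}\widetilde F(\vb z_k)=\vb 0$ are small refinements that the paper takes for granted.
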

\begin{proof}
From \eqref{eq:e_k_article},
\begin{equation}
    \label{eq:ie_1_article}
    -\grad\widetilde F(\vb z_k)\transp\vb s_k
    =
    -\overline{\grad}\widetilde F(\vb z_k)\transp\vb s_k
    +
    \vb e_k\transp\vb s_k.
\end{equation}
For the second term, Cauchy--Schwarz, \Cref{ass:dir_tol_article}, and \eqref{eq:gdicond_b_article} give
\begin{align}
    \abs{\vb e_k\transp\vb s_k}
    &\le
    \norm{\vb e_k}_2\norm{\vb s_k}_2
    \nonumber\\
    &\le
    \zeta\norm{\overline{\grad}\widetilde F(\vb z_k)}_2
    c_2'\norm{\overline{\grad}\widetilde F(\vb z_k)}_2
    \nonumber\\
    &=
    \zeta c_2'\norm{\overline{\grad}\widetilde F(\vb z_k)}_2^2.
    \label{eq:ie_2_article}
\end{align}
Substituting \eqref{eq:gdicond_a_article} and \eqref{eq:ie_2_article} into \eqref{eq:ie_1_article},
\begin{align}
    -\grad\widetilde F(\vb z_k)\transp\vb s_k
    &\ge
    -\overline{\grad}\widetilde F(\vb z_k)\transp\vb s_k
    -
    \abs{\vb e_k\transp\vb s_k}
    \nonumber\\
    &\ge
    (c_1'-\zeta c_2')
    \norm{\overline{\grad}\widetilde F(\vb z_k)}_2^2.
    \label{eq:ie_3_article}
\end{align}
Since $\zeta<c_1'/c_2'$, the coefficient is positive. Applying \Cref{prop:comp_grad_article} to \eqref{eq:ie_3_article} gives
\[
    -\grad\widetilde F(\vb z_k)\transp\vb s_k
    \ge
    \frac{c_1'-\zeta c_2'}{(1+\zeta)^2}
    \norm{\grad\widetilde F(\vb z_k)}_2^2
    =
    c_1\norm{\grad\widetilde F(\vb z_k)}_2^2,
\]
which proves \eqref{eq:gdm_exact_1_article}. Finally, \eqref{eq:gdm_exact_2_article} follows from \eqref{eq:gdicond_b_article} and \Cref{prop:comp_grad_article}:
\[
    \norm{\vb s_k}_2
    \le
    c_2'\norm{\overline{\grad}\widetilde F(\vb z_k)}_2
    \le
    \frac{c_2'}{1-\zeta}\norm{\grad\widetilde F(\vb z_k)}_2
    =
    c_2\norm{\grad\widetilde F(\vb z_k)}_2.
\]
\end{proof}


\section{Convergence analysis}
\label{sec:convergence}

The conversion theorem allows the convergence analysis to be conducted with exact-gradient descent inequalities. The following assumptions are used throughout this section.

\begin{assumption}[Smoothness]
\label{ass:smooth_article}
The function $\widetilde F:\RR^m\to\RR$ is continuously differentiable and has $L$-Lipschitz continuous gradient:
\[
    \norm{\grad\widetilde F(\vb x)-\grad\widetilde F(\vb y)}_2
    \le L\norm{\vb x-\vb y}_2,
    \qquad \vb x,\vb y\in\RR^m.
\]
\end{assumption}

\begin{assumption}[Lower boundedness]
\label{ass:bounded_article}
There exists $\widetilde F_{\inf}\in\RR$ such that
\[
    \widetilde F(\vb z_k)\ge \widetilde F_{\inf},
    \qquad k\in\NN_0.
\]
\end{assumption}

\begin{lemma}[Descent inequality]
\label{lem:desc_igdmd_article}
Let $(\vb z_k)_{k\in\NN_0}$ be defined by \eqref{eq:igdm_d_rule_article}. Suppose \Cref{ass:smooth_article,ass:directions_article,ass:dir_tol_article} hold. Then
\[
    \widetilde F(\vb z_{k+1})
    \le
    \widetilde F(\vb z_k)
    -
    t_k\left(c_1-\frac{L}{2}c_2^2t_k\right)
    \norm{\grad\widetilde F(\vb z_k)}_2^2,
    \qquad k\in\NN_0.
\]
\end{lemma}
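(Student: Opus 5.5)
The plan is to combine the standard descent lemma for functions with Lipschitz continuous gradient with the exact general-direction inequalities supplied by \Cref{thm:inex_to_ex_gdm_article}. Under \Cref{ass:smooth_article}, for any $\vb x,\vb d\in\RR^m$ we have
\[
    \widetilde F(\vb x+\vb d)\le \widetilde F(\vb x)+\grad\widetilde F(\vb x)\transp\vb d+\frac{L}{2}\norm{\vb d}_2^2 .
\]
This follows by writing $\widetilde F(\vb x+\vb d)-\widetilde F(\vb x)=\int_0^1\grad\widetilde F(\vb x+\theta\vb d)\transp\vb d\,d\theta$. We then subtract $\grad\widetilde F(\vb x)\transp\vb d$ and bound the remainder by Cauchy--Schwarz and the Lipschitz constant, which gives $\int_0^1 L\theta\norm{\vb d}_2^2\,d\theta=\frac L2\norm{\vb d}_2^2$. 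I would state this briefly inside the proof rather than as a separate lemma.

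Next we apply the inequality with $\vb x=\vb z_k$ and $\vb d=t_k\vb s_k$, using the update rule \eqref{eq:igdm_d_rule_article}. This yields
\[
    \widetilde F(\vb z_{k+1})\le\widetilde F(\vb z_k)+t_k\grad\widetilde F(\vb z_k)\transp\vb s_k+\frac{L}{2}t_k^2\norm{\vb s_k}_2^2.
\]
Since \Cref{ass:directions_article,ass:dir_tol_article} hold, \Cref{thm:inex_to_ex_gdm_article} applies. Its bound \eqref{eq:gdm_exact_1_article} replaces the linear term by $-c_1t_k\norm{\grad\widetilde F(\vb z_k)}_2^2$; this step uses $t_k>0$. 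Its bound \eqref{eq:gdm_exact_2_article} replaces the quadratic term by $\frac L2 c_2^2t_k^2\norm{\grad\widetilde F(\vb z_k)}_2^2$. Factoring out $t_k\norm{\grad\widetilde F(\vb z_k)}_2^2$ gives exactly the stated inequality.

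There is essentially no obstacle. The only point needing care is the sign bookkeeping: positivity of $t_k$ must be used when multiplying the first inequality, and squaring \eqref{eq:gdm_exact_2_article} is legitimate because both sides are nonnegative. A second point is that the descent lemma itself requires $\widetilde F\in C^1$ with a globally Lipschitz gradient. This is exactly \Cref{ass:smooth_article}, so the segment between $\vb z_k$ and $\vb z_{k+1}$ poses no issue.
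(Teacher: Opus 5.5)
Your proposal is correct and matches the paper's proof: you apply the descent lemma implied by \Cref{ass:smooth_article} to the step $\vb z_{k+1}-\vb z_k=t_k\vb s_k$, then substitute the two exact general-direction inequalities of \Cref{thm:inex_to_ex_gdm_article}. Your extra remarks only make explicit what the paper leaves implicit, namely the integral derivation of the descent lemma, the use of $t_k>0$, and squaring the nonnegative bound on $\norm{\vb s_k}_2$.
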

\begin{proof}
By \Cref{ass:smooth_article},
\begin{align}
    \widetilde F(\vb z_{k+1})
    &\le
    \widetilde F(\vb z_k)
    +
    \grad\widetilde F(\vb z_k)\transp(\vb z_{k+1}-\vb z_k)
    +
    \frac{L}{2}\norm{\vb z_{k+1}-\vb z_k}_2^2
    \nonumber\\
    &=
    \widetilde F(\vb z_k)
    +
    t_k\grad\widetilde F(\vb z_k)\transp\vb s_k
    +
    \frac{L}{2}t_k^2\norm{\vb s_k}_2^2.
    \label{eq:desc_igdmd_1_article}
\end{align}
Using \Cref{thm:inex_to_ex_gdm_article} in \eqref{eq:desc_igdmd_1_article},
\begin{align*}
    \widetilde F(\vb z_{k+1})
    &\le
    \widetilde F(\vb z_k)
    -
    c_1t_k\norm{\grad\widetilde F(\vb z_k)}_2^2
    +
    \frac{L}{2}c_2^2t_k^2\norm{\grad\widetilde F(\vb z_k)}_2^2 \\
    &=
    \widetilde F(\vb z_k)
    -
    t_k\left(c_1-\frac{L}{2}c_2^2t_k\right)
    \norm{\grad\widetilde F(\vb z_k)}_2^2.
\end{align*}
\end{proof}

\subsection{Uniformly bounded step sizes}
\label{subsec:bounded_steps_article}

\begin{assumption}[Uniformly bounded step sizes]
\label{ass:stepsize_bounded_article}
There exist constants $\overline t>0$ and $\gamma>0$ such that
\[
    \overline t\le \frac{2c_1-\gamma}{c_2^2L},
\]
and
\[
    t_k\in\left[\overline t,\frac{2c_1-\gamma}{c_2^2L}\right],
    \qquad k\in\NN_0.
\]
\end{assumption}

\begin{theorem}[Convergence for uniformly bounded step sizes]
\label{thm:conv_bounded_article}
Suppose \Cref{ass:smooth_article,ass:bounded_article,ass:directions_article,ass:dir_tol_article,ass:stepsize_bounded_article} hold. Then
\[
    \lim_{k\to\infty}\grad\widetilde F(\vb z_k)=\vb{0}_{\RR^m},
    \qquad
    \lim_{k\to\infty}\overline{\grad}\widetilde F(\vb z_k)=\vb{0}_{\RR^m}.
\]
Moreover, every limit point of $(\vb z_k)_{k\in\NN_0}$ is a stationary point of $\widetilde F$.
\end{theorem}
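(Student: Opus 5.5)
The plan is to combine the descent inequality of \Cref{lem:desc_igdmd_article} with the step-size window of \Cref{ass:stepsize_bounded_article}, telescope, and use the lower bound of \Cref{ass:bounded_article}.

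\textbf{Step 1: a uniform per-step decrease.} Since $t_k\le (2c_1-\gamma)/(c_2^2L)$, we have
\[
c_1-\frac{L}{2}c_2^2t_k\ \ge\ c_1-\frac{2c_1-\gamma}{2}=\frac{\gamma}{2}.
\]
Combined with $t_k\ge\overline t$, \Cref{lem:desc_igdmd_article} then gives
\[
\widetilde F(\vb z_{k+1})\le \widetilde F(\vb z_k)-\frac{\overline t\gamma}{2}\norm{\grad\widetilde F(\vb z_k)}_2^2 .
\]
The interval in \Cref{ass:stepsize_bounded_article} is nonempty because $\overline t\le (2c_1-\gamma)/(c_2^2L)$ and $\overline t>0$; implicitly this forces $\gamma<2c_1$. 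Here $c_1>0$ holds because $\zeta<c_1'/c_2'$, by \Cref{thm:inex_to_ex_gdm_article}. So the per-step decrease is genuinely positive.

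\textbf{Step 2: summability and convergence of the gradients.} Summing the inequality from $k=0$ to $N$ telescopes to
\[
\frac{\overline t\gamma}{2}\sum_{k=0}^N\norm{\grad\widetilde F(\vb z_k)}_2^2\le \widetilde F(\vb z_0)-\widetilde F(\vb z_{N+1})\le \widetilde F(\vb z_0)-\widetilde F_{\inf},
\]
where the last bound uses \Cref{ass:bounded_article}. The series therefore converges, so its terms tend to zero and $\grad\widetilde F(\vb z_k)\to\vb 0$. The corresponding statement for the inexact gradients follows immediately from \Cref{cor:ex_inex_zero_article}.

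\textbf{Step 3: limit points.} Let $\bar{\vb z}$ be a limit point, with $\vb z_{k_j}\to\bar{\vb z}$. Since $\grad\widetilde F$ is continuous (it is Lipschitz by \Cref{ass:smooth_article}), $\grad\widetilde F(\bar{\vb z})=\lim_j\grad\widetilde F(\vb z_{k_j})=\vb 0$.

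There is no real obstacle in this argument. The only care needed is in Step 1: checking that the coefficient is bounded below by $\gamma/2>0$ uniformly in $k$, and noting that the constants $c_1,c_2$ come from \Cref{thm:inex_to_ex_gdm_article} rather than from the inexact constants $c_1',c_2'$.
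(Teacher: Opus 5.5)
Your proposal is correct and essentially matches the paper's proof. Both use the same $\gamma/2$ lower bound on the coefficient, telescope against $\widetilde F_{\inf}$, invoke \Cref{cor:ex_inex_zero_article} for the inexact gradients, and use continuity at limit points; the only cosmetic difference is that you apply $t_k\ge\overline t$ before summing, whereas the paper first obtains $\sum_k t_k\norm{\grad\widetilde F(\vb z_k)}_2^2<\infty$ and then removes $\overline t$.
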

\begin{proof}
By \Cref{lem:desc_igdmd_article},
\begin{equation}
    \label{eq:conv_bounded_1_article}
    \widetilde F(\vb z_{k+1})
    \le
    \widetilde F(\vb z_k)
    -
    t_k\left(c_1-\frac{L}{2}c_2^2t_k\right)
    \norm{\grad\widetilde F(\vb z_k)}_2^2.
\end{equation}
Because $t_k\le(2c_1-\gamma)/(c_2^2L)$, we have
\[
    c_1-\frac{L}{2}c_2^2t_k\ge\frac{\gamma}{2}.
\]
Thus
\begin{equation}
    \label{eq:conv_bounded_2_article}
    \widetilde F(\vb z_{k+1})
    \le
    \widetilde F(\vb z_k)
    -
    \frac{\gamma}{2}t_k
    \norm{\grad\widetilde F(\vb z_k)}_2^2.
\end{equation}
Summing from $0$ to $K-1$ gives
\[
    \frac{\gamma}{2}\sum_{k=0}^{K-1}t_k
    \norm{\grad\widetilde F(\vb z_k)}_2^2
    \le
    \widetilde F(\vb z_0)-\widetilde F(\vb z_K)
    \le
    \widetilde F(\vb z_0)-\widetilde F_{\inf}.
\]
Therefore
\[
    \sum_{k=0}^{\infty}t_k
    \norm{\grad\widetilde F(\vb z_k)}_2^2<\infty.
\]
Since $t_k\ge\overline t>0$, it follows that
\[
    \sum_{k=0}^{\infty}
    \norm{\grad\widetilde F(\vb z_k)}_2^2<\infty,
\]
and hence $\grad\widetilde F(\vb z_k)\to\vb{0}$. The convergence of the inexact gradient follows from \Cref{cor:ex_inex_zero_article}. Finally, if $\vb z_{k_j}\to\vb z^*$, the continuity of $\grad\widetilde F$ implies $\grad\widetilde F(\vb z^*)=\vb{0}$, so $\vb z^*$ is stationary.
\end{proof}

\subsection{Diminishing step sizes}
\label{subsec:diminishing_steps_article}

\begin{assumption}[Diminishing step sizes]
\label{ass:stepsize_diminishing_article}
The step-size sequence satisfies
\begin{equation}
    \label{eq:diminishing_tk_1_article}
    \lim_{k\to\infty}t_k=0,
    \qquad
    \sum_{k=0}^{\infty}t_k=\infty.
\end{equation}
\end{assumption}

\begin{lemma}
\label{lem:e_inf_article}
Suppose \Cref{ass:smooth_article} holds and the directions satisfy \eqref{eq:gdm_exact_2_article}. If
\[
    \sum_{k=0}^{\infty}t_k\norm{\grad\widetilde F(\vb z_k)}_2^2<\infty
\]
and
\[
    \liminf_{k\to\infty}\norm{\grad\widetilde F(\vb z_k)}_2=0,
\]
then
\[
    \lim_{k\to\infty}\norm{\grad\widetilde F(\vb z_k)}_2=0.
\]
\end{lemma}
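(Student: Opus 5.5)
We argue by contradiction, following the classical argument of Bertsekas--Tsitsiklis for gradient methods with errors. Write $g_k=\grad\widetilde F(\vb z_k)$. Suppose $\limsup_{k\to\infty}\norm{g_k}_2>0$, and fix $\varepsilon>0$ with $\limsup_k\norm{g_k}_2>\varepsilon$. Since $\liminf_k\norm{g_k}_2=0$, the sequence $\norm{g_k}_2$ crosses the band $(\varepsilon/2,\varepsilon)$ infinitely often. We can therefore build index sequences $m_j<n_j<m_{j+1}$ with the following properties:
\[
    \norm{g_{m_j}}_2<\tfrac{\varepsilon}{2},\qquad
    \norm{g_{n_j}}_2>\varepsilon,\qquad
    \tfrac{\varepsilon}{2}\le\norm{g_k}_2\le\varepsilon \quad\text{for } m_j<k<n_j .
\]
Alternatively, we can take $m_j$ to be the last index before $n_j$ with $\norm{g_{m_j}}_2<\varepsilon/2$. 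Then $\norm{g_k}_2\ge\varepsilon/2$ for $m_j<k\le n_j$. For $k=m_j$ itself we will use an additional estimate, given below.

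\textbf{Main step: the increments of $\norm{g_k}_2$ over each crossing.} By \Cref{ass:smooth_article}, the update \eqref{eq:igdm_d_rule_article} and \eqref{eq:gdm_exact_2_article},
\[
    \norm{g_{k+1}-g_k}_2\le L t_k\norm{\vb s_k}_2\le Lc_2t_k\norm{g_k}_2 .
\]
Summing over $k=m_j,\dots,n_j-1$ gives
\[
    \tfrac{\varepsilon}{2}
    <\norm{g_{n_j}}_2-\norm{g_{m_j}}_2
    \le Lc_2\sum_{k=m_j}^{n_j-1}t_k\norm{g_k}_2 .
\]
For $m_j<k<n_j$ we have $\norm{g_k}_2\ge\varepsilon/2$, hence $t_k\norm{g_k}_2\le\tfrac{2}{\varepsilon}t_k\norm{g_k}_2^2$. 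So the interior part of the sum is bounded by $\tfrac{2}{\varepsilon}\sum_{k>m_j}t_k\norm{g_k}_2^2$. This is a tail of a convergent series, so it tends to $0$ as $j\to\infty$.

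\textbf{The boundary term $k=m_j$ is the main obstacle.} Here $\norm{g_{m_j}}_2$ may be tiny and the quadratic comparison fails. We handle it by choosing $m_j$ as the last index with $\norm{g_{m_j}}_2<\varepsilon/2$. Then $\norm{g_{m_j+1}}_2\ge\varepsilon/2$, and
\[
    \norm{g_{m_j+1}}_2\le(1+Lc_2t_{m_j})\norm{g_{m_j}}_2 .
\]
Together with $\norm{g_{m_j}}_2<\varepsilon/2$, this gives $\norm{g_{m_j}}_2\ge \varepsilon/\bigl(2(1+Lc_2t_{m_j})\bigr)$. If $t_k$ is bounded, this is a uniform positive lower bound, and the term $t_{m_j}\norm{g_{m_j}}_2$ is again controlled by $C\,t_{m_j}\norm{g_{m_j}}_2^2\to0$. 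If the statement is used with \Cref{ass:stepsize_diminishing_article}, then $t_k\to0$ and the bound is even simpler: the boundary term is at most $(\varepsilon/2)\,t_{m_j}\to0$.

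In every case $Lc_2\sum_{k=m_j}^{n_j-1}t_k\norm{g_k}_2\to0$, which contradicts the lower bound $\varepsilon/2$. Hence $\limsup_k\norm{g_k}_2=0$, and so $\norm{g_k}_2\to0$. If the step sizes were allowed to be unbounded, I would expect to need the boundedness of $t_k$ implicitly. I would state that this comes from the intended application with \Cref{ass:stepsize_diminishing_article}.
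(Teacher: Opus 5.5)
Your proposal has a genuine gap. It proves the lemma only under a step-size hypothesis that the statement does not contain. The lemma assumes nothing about $(t_k)$ beyond positivity, yet your treatment of the boundary term $k=m_j$ needs either $\sup_k t_k<\infty$ or $t_k\to0$. Without that, your lower bound $\norm{g_{m_j}}_2\ge\varepsilon/\bigl(2(1+Lc_2t_{m_j})\bigr)$ degenerates. The term $Lc_2t_{m_j}\norm{g_{m_j}}_2$ is then not controlled by summability of $t_k\norm{g_k}_2^2$. For instance, take $\norm{g_{m_j}}_2=\delta_j$ and $t_{m_j}=1/\delta_j$ with $\sum_j\delta_j<\infty$. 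Then $t_{m_j}\norm{g_{m_j}}_2^2=\delta_j$ is summable, while $t_{m_j}\norm{g_{m_j}}_2=1$. The inequalities you invoke then allow a one-step jump of size up to $Lc_2$, so $n_j=m_j+1$ is possible, the interior sum is empty, and the upcrossing yields no contradiction. Falling back on \Cref{ass:stepsize_diminishing_article} covers the use in \Cref{thm:conv_diminishing_article}, but not the lemma as stated. Your expectation that boundedness of $t_k$ is implicitly needed is also incorrect: the lemma holds for arbitrary positive steps.

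The missing idea is to run the comparison on the way down rather than on the way up, which is what the paper does. Take maximal runs $[m_j,n_j)$ on which $\norm{g_k}_2>\varepsilon/3$, so that $\norm{g_{n_j}}_2\le\varepsilon/3$. For every $m\in[m_j,n_j)$, bound $\norm{g_{n_j}-g_m}_2\le Lc_2\sum_{k=m}^{n_j-1}t_k\norm{g_k}_2\le\frac{3Lc_2}{\varepsilon}\sum_{k\ge m_j}t_k\norm{g_k}_2^2<\varepsilon/3$ for large $j$. All summands lie inside the high run, and the small-gradient index $n_j$ appears only as an endpoint, never as a summand. Hence no bound on $t_k$ is used. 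The result is $\norm{g_m}_2\le 2\varepsilon/3$ for all large $m$, contradicting the limsup.

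The same repair works in your own setup. After each $n_j$ with $\norm{g_{n_j}}_2>\varepsilon$, let $p_j>n_j$ be the first index with $\norm{g_{p_j}}_2<\varepsilon/2$; it exists because the liminf is $0$. Every $k\in[n_j,p_j)$ has $\norm{g_k}_2\ge\varepsilon/2$, so $\varepsilon/2<\norm{g_{n_j}}_2-\norm{g_{p_j}}_2\le\frac{2Lc_2}{\varepsilon}\sum_{k\ge n_j}t_k\norm{g_k}_2^2\to0$, which is the contradiction you wanted.
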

\begin{proof}
Assume by contradiction that the conclusion is false. Then there exists $\epsilon>0$ such that
\[
    \limsup_{k\to\infty}\norm{\grad\widetilde F(\vb z_k)}_2\ge\epsilon.
\]
Together with the liminf condition, this yields index sequences $(m_j)$ and $(n_j)$ with $m_j<n_j<m_{j+1}$ such that
\[
    \norm{\grad\widetilde F(\vb z_k)}_2>\frac{\epsilon}{3}
    \quad (m_j\le k<n_j),
    \qquad
    \norm{\grad\widetilde F(\vb z_k)}_2\le\frac{\epsilon}{3}
    \quad (n_j\le k<m_{j+1}).
\]
Since $\sum_k t_k\norm{\grad\widetilde F(\vb z_k)}_2^2<\infty$, choose $\overline j$ such that
\[
    \sum_{k=m_{\overline j}}^{\infty}
    t_k\norm{\grad\widetilde F(\vb z_k)}_2^2
    <\frac{\epsilon^2}{9Lc_2}.
\]
For $j\ge\overline j$ and $m_j\le m<n_j$, smoothness and \eqref{eq:gdm_exact_2_article} give
\begin{align*}
    \norm{\grad\widetilde F(\vb z_{n_j})-
    \grad\widetilde F(\vb z_m)}_2
    &\le
    \sum_{k=m}^{n_j-1}
    \norm{\grad\widetilde F(\vb z_{k+1})-
    \grad\widetilde F(\vb z_k)}_2 \\
    &\le
    L\sum_{k=m}^{n_j-1}\norm{\vb z_{k+1}-\vb z_k}_2 \\
    &=
    L\sum_{k=m}^{n_j-1}t_k\norm{\vb s_k}_2 \\
    &\le
    Lc_2\sum_{k=m}^{n_j-1}t_k
    \norm{\grad\widetilde F(\vb z_k)}_2.
\end{align*}
On $m_j\le k<n_j$, the gradient norm is larger than $\epsilon/3$, so
\[
    t_k\norm{\grad\widetilde F(\vb z_k)}_2
    \le
    \frac{3}{\epsilon}t_k
    \norm{\grad\widetilde F(\vb z_k)}_2^2.
\]
Therefore
\[
    \norm{\grad\widetilde F(\vb z_{n_j})-
    \grad\widetilde F(\vb z_m)}_2
    \le
    \frac{3Lc_2}{\epsilon}
    \sum_{k=m}^{n_j-1}t_k
    \norm{\grad\widetilde F(\vb z_k)}_2^2
    <
    \frac{\epsilon}{3}.
\]
Since $\norm{\grad\widetilde F(\vb z_{n_j})}_2\le\epsilon/3$, it follows that
\[
    \norm{\grad\widetilde F(\vb z_m)}_2\le\frac{2\epsilon}{3},
    \qquad j\ge\overline j,
    \quad m_j\le m<n_j.
\]
Together with the small-gradient intervals, this implies
$\norm{\grad\widetilde F(\vb z_m)}_2\le2\epsilon/3$ for all sufficiently large $m$, contradicting the limsup condition.
\end{proof}

\begin{theorem}[Convergence for diminishing step sizes]
\label{thm:conv_diminishing_article}
Suppose \Cref{ass:smooth_article,ass:bounded_article,ass:directions_article,ass:dir_tol_article,ass:stepsize_diminishing_article} hold. Then
\[
    \lim_{k\to\infty}\grad\widetilde F(\vb z_k)=\vb{0}_{\RR^m},
    \qquad
    \lim_{k\to\infty}\overline{\grad}\widetilde F(\vb z_k)=\vb{0}_{\RR^m}.
\]
Moreover, every limit point of $(\vb z_k)_{k\in\NN_0}$ is stationary.
\end{theorem}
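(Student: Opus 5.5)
The plan is to reduce this to \Cref{lem:e_inf_article}, after establishing its two hypotheses from the descent inequality of \Cref{lem:desc_igdmd_article}. Since $t_k\to0$, there is an index $k_0$ such that $t_k\le c_1/(Lc_2^2)$ for all $k\ge k_0$. For such $k$,
\[
    c_1-\frac{L}{2}c_2^2t_k\ge\frac{c_1}{2}.
\]
\Cref{lem:desc_igdmd_article} then gives
\[
    \widetilde F(\vb z_{k+1})\le\widetilde F(\vb z_k)-\frac{c_1}{2}t_k\norm{\grad\widetilde F(\vb z_k)}_2^2,
    \qquad k\ge k_0.
\]
Summing from $k_0$ to $K-1$ and using \Cref{ass:bounded_article} yields
\[
    \sum_{k=k_0}^{\infty}t_k\norm{\grad\widetilde F(\vb z_k)}_2^2\le\frac{2}{c_1}\big(\widetilde F(\vb z_{k_0})-\widetilde F_{\inf}\big)<\infty.
\]
The finitely many initial terms do not affect summability, so the first hypothesis of \Cref{lem:e_inf_article} holds.

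Next, show that $\liminf_{k\to\infty}\norm{\grad\widetilde F(\vb z_k)}_2=0$, arguing by contradiction. If the liminf were positive, there would exist $\epsilon>0$ and $k_1$ with $\norm{\grad\widetilde F(\vb z_k)}_2\ge\epsilon$ for all $k\ge k_1$. Then
\[
    \sum_{k} t_k\norm{\grad\widetilde F(\vb z_k)}_2^2\ge\epsilon^2\sum_{k\ge k_1}t_k=\infty,
\]
by the divergence condition in \eqref{eq:diminishing_tk_1_article}. This contradicts the summability just obtained.

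With both hypotheses in place, and with \eqref{eq:gdm_exact_2_article} supplied by \Cref{thm:inex_to_ex_gdm_article} (using \Cref{ass:directions_article,ass:dir_tol_article}), \Cref{lem:e_inf_article} gives $\grad\widetilde F(\vb z_k)\to\vb 0$. \Cref{cor:ex_inex_zero_article} then gives $\overline{\grad}\widetilde F(\vb z_k)\to\vb 0$. For stationarity of limit points, take a subsequence $\vb z_{k_j}\to\vb z^*$; continuity of $\grad\widetilde F$ from \Cref{ass:smooth_article} gives $\grad\widetilde F(\vb z^*)=\lim_j\grad\widetilde F(\vb z_{k_j})=\vb 0$.

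I expect no serious obstacle, since the oscillation argument is already packaged in \Cref{lem:e_inf_article}. The only point needing care is the first step: the descent coefficient $c_1-\tfrac L2c_2^2t_k$ is guaranteed positive only eventually, not for all $k$. The summation must therefore start at $k_0$, and monotonicity of $\widetilde F(\vb z_k)$ should be asserted only from $k_0$ onward.
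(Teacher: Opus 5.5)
Your proposal is correct and follows the paper's proof essentially step for step: eventual positivity of the descent coefficient in \Cref{lem:desc_igdmd_article} (you fix the explicit threshold $t_k\le c_1/(Lc_2^2)$, giving $c_1/2$, where the paper takes an unspecified $c\in(0,c_1)$), then summability of $t_k\norm{\grad\widetilde F(\vb z_k)}_2^2$, then $\liminf_k\norm{\grad\widetilde F(\vb z_k)}_2=0$ via $\sum_k t_k=\infty$, and finally \Cref{lem:e_inf_article}, \Cref{cor:ex_inex_zero_article}, and continuity. Starting the telescoping sum at $k_0$ makes explicit a point the paper passes over quickly, namely that the descent inequality is only guaranteed from $k_0$ on.
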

\begin{proof}
By \Cref{lem:desc_igdmd_article},
\[
    \widetilde F(\vb z_{k+1})
    \le
    \widetilde F(\vb z_k)
    -
    t_k\left(c_1-\frac{L}{2}c_2^2t_k\right)
    \norm{\grad\widetilde F(\vb z_k)}_2^2.
\]
Since $t_k\to0$, choose $k_1$ and $c\in(0,c_1)$ such that
\[
    c_1-\frac{L}{2}c_2^2t_k\ge c,
    \qquad k\ge k_1.
\]
Then
\[
    \widetilde F(\vb z_{k+1})
    \le
    \widetilde F(\vb z_k)
    -ct_k\norm{\grad\widetilde F(\vb z_k)}_2^2,
    \qquad k\ge k_1.
\]
Summing and using the lower bound on $\widetilde F$ gives
\[
    \sum_{k=0}^{\infty}t_k\norm{\grad\widetilde F(\vb z_k)}_2^2<\infty.
\]
If $\liminf_k\norm{\grad\widetilde F(\vb z_k)}_2>0$, then there exist $\epsilon>0$ and $k_2$ such that $\norm{\grad\widetilde F(\vb z_k)}_2\ge\epsilon$ for all $k\ge k_2$. Since $\sum_k t_k=\infty$, this would imply
\[
    \sum_{k=k_2}^{\infty}t_k\norm{\grad\widetilde F(\vb z_k)}_2^2
    \ge
    \epsilon^2\sum_{k=k_2}^{\infty}t_k
    =\infty,
\]
contradicting the summability above. Hence
\[
    \liminf_{k\to\infty}\norm{\grad\widetilde F(\vb z_k)}_2=0.
\]
By \Cref{lem:e_inf_article}, $\grad\widetilde F(\vb z_k)\to\vb{0}$. The inexact gradient convergence follows from \Cref{cor:ex_inex_zero_article}; stationarity of limit points follows by continuity.
\end{proof}

\subsection{Adaptive Armijo-type step sizes}
\label{subsec:adaptive_steps_article}

\begin{lemma}[Exact descent along the accepted direction]
\label{lem:armijo_descent_article}
Suppose \Cref{ass:directions_article,ass:dir_tol_article} hold. Then
\[
    \grad\widetilde F(\vb z_k)\transp\vb s_k
    \le
    -C\norm{\vb s_k}_2^2,
    \qquad k\in\NN_0,
\]
where $C=c_1/c_2^2>0$.
\end{lemma}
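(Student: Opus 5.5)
The plan is to combine the two exact general-direction inequalities of \Cref{thm:inex_to_ex_gdm_article}. No smoothness of $\widetilde F$ is needed; the estimate is purely algebraic at each fixed $k$.

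First I will record that both constants are positive. Under \Cref{ass:dir_tol_article} we have $\zeta<c_1'/c_2'$, so $c_1=(c_1'-\zeta c_2')/(1+\zeta)^2>0$. Since $c_1'/c_2'\le 1$ (as used in \Cref{prop:descent_from_relative_error_article}), we also have $\zeta<1$, hence $c_2=c_2'/(1-\zeta)>0$. Therefore $C=c_1/c_2^2>0$.

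Next, squaring \eqref{eq:gdm_exact_2_article} gives
\[
\norm{\vb s_k}_2^2\le c_2^2\norm{\grad\widetilde F(\vb z_k)}_2^2,
\]
which can be rewritten as
\[
\frac{c_1}{c_2^2}\norm{\vb s_k}_2^2\le c_1\norm{\grad\widetilde F(\vb z_k)}_2^2 .
\]
Chaining this with \eqref{eq:gdm_exact_1_article} yields
\[
C\norm{\vb s_k}_2^2\le c_1\norm{\grad\widetilde F(\vb z_k)}_2^2\le -\grad\widetilde F(\vb z_k)\transp\vb s_k .
\]
Multiplying by $-1$ gives the claim.

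There is no real obstacle. The only points to verify are the positivity of $c_1$ and $c_2$ and the implicit requirement $\zeta<1$, which follows from $c_1'\le c_2'$. That inequality itself follows from \Cref{ass:directions_article} via Cauchy--Schwarz, since $c_1'\norm{\overline{\grad}\widetilde F}_2^2\le\norm{\overline{\grad}\widetilde F}_2\norm{\vb s_k}_2\le c_2'\norm{\overline{\grad}\widetilde F}_2^2$ whenever the inexact gradient is nonzero. If the inexact gradient vanishes at some $k$, then $\vb s_k=\vb 0$ by \eqref{eq:gdicond_b_article}, and the inequality is trivially $0\le 0$.
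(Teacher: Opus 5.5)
Your proposal is correct and is essentially the paper's proof: square $\norm{\vb s_k}_2\le c_2\norm{\grad\widetilde F(\vb z_k)}_2$ from \Cref{thm:inex_to_ex_gdm_article} and chain it with $c_1\norm{\grad\widetilde F(\vb z_k)}_2^2\le-\grad\widetilde F(\vb z_k)\transp\vb s_k$. Your explicit check that $c_1,c_2>0$ (including $\zeta<1$ via $c_1'\le c_2'$ from Cauchy--Schwarz) is a small addition; the paper only asserts this in passing when it writes $\zeta<c_1'/c_2'\le 1$.
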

\begin{proof}
By \Cref{thm:inex_to_ex_gdm_article},
\[
    \norm{\vb s_k}_2\le c_2\norm{\grad\widetilde F(\vb z_k)}_2,
\]
so
\[
    \norm{\grad\widetilde F(\vb z_k)}_2^2
    \ge
    \frac{1}{c_2^2}\norm{\vb s_k}_2^2.
\]
Together with \eqref{eq:gdm_exact_1_article},
\[
    -\grad\widetilde F(\vb z_k)\transp\vb s_k
    \ge
    c_1\norm{\grad\widetilde F(\vb z_k)}_2^2
    \ge
    \frac{c_1}{c_2^2}\norm{\vb s_k}_2^2.
\]
Multiplying by $-1$ proves the result.
\end{proof}

\begin{proposition}[Sufficient decrease interval]
\label{prop:armijo_sufficient_article}
Suppose \Cref{ass:smooth_article,ass:directions_article,ass:dir_tol_article} hold. Let $\sigma\in(0,C)$, where $C$ is defined in \Cref{lem:armijo_descent_article}. Then there exists
\[
    t_{\mathrm{suf}}=\frac{2}{L}(C-\sigma)>0
\]
such that
\[
    \widetilde F(\vb z_k+t\vb s_k)
    \le
    \widetilde F(\vb z_k)-t\sigma\norm{\vb s_k}_2^2,
    \qquad 0<t\le t_{\mathrm{suf}}.
\]
\end{proposition}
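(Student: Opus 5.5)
The plan is to combine the quadratic upper bound supplied by \Cref{ass:smooth_article} with the exact descent estimate of \Cref{lem:armijo_descent_article}. The exact gradient and the step-size rule are kept in exact form throughout. This works because \Cref{thm:inex_to_ex_gdm_article} has already turned the inexact information into exact descent inequalities.

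First, for any $t>0$, apply the descent lemma (the standard consequence of $L$-Lipschitz continuity of $\grad\widetilde F$, used as in \eqref{eq:desc_igdmd_1_article}) with the step $t\vb s_k$. This gives
\[
    \widetilde F(\vb z_k+t\vb s_k)
    \le
    \widetilde F(\vb z_k)+t\,\grad\widetilde F(\vb z_k)\transp\vb s_k+\frac{L}{2}t^2\norm{\vb s_k}_2^2 .
\]

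Second, bound the linear term using \Cref{lem:armijo_descent_article}: $\grad\widetilde F(\vb z_k)\transp\vb s_k\le -C\norm{\vb s_k}_2^2$ with $C=c_1/c_2^2$. This yields
\[
    \widetilde F(\vb z_k+t\vb s_k)
    \le
    \widetilde F(\vb z_k)-t\left(C-\frac{L}{2}t\right)\norm{\vb s_k}_2^2 .
\]

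Third, compare with the target inequality. It suffices that $C-\tfrac{L}{2}t\ge\sigma$, which is equivalent to $t\le \tfrac{2}{L}(C-\sigma)=t_{\mathrm{suf}}$. Since $\sigma<C$, this threshold is strictly positive. Because $\norm{\vb s_k}_2^2\ge0$, the sufficient-decrease inequality holds for every $t\in(0,t_{\mathrm{suf}}]$.

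The only point needing care is confirming that $C>0$, which requires $c_1>0$. This holds by the standing requirement $\zeta<c_1'/c_2'$ in \Cref{ass:dir_tol_article}, exactly as noted in \Cref{thm:inex_to_ex_gdm_article}; it is what makes the interval $(0,C)$ for $\sigma$ nonempty. The degenerate case $\vb s_k=\vb 0$ needs no separate treatment, since both sides then coincide. I expect no real obstacle: the argument is a two-line substitution once the exact descent bound from \Cref{lem:armijo_descent_article} is available.
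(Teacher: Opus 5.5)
Your proposal is correct and follows the paper's proof exactly: apply the quadratic upper bound from \Cref{ass:smooth_article}, substitute $\grad\widetilde F(\vb z_k)\transp\vb s_k\le -C\norm{\vb s_k}_2^2$ from \Cref{lem:armijo_descent_article}, and observe that $C-\frac{L}{2}t\ge\sigma$ holds precisely when $t\le t_{\mathrm{suf}}$. Your remark that $C>0$ follows from $c_1>0$, that is, from $\zeta<c_1'/c_2'$, is a worthwhile check that the paper leaves implicit.
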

\begin{proof}
By smoothness,
\[
    \widetilde F(\vb z_k+t\vb s_k)
    \le
    \widetilde F(\vb z_k)
    +t\grad\widetilde F(\vb z_k)\transp\vb s_k
    +\frac{L}{2}t^2\norm{\vb s_k}_2^2.
\]
Using \Cref{lem:armijo_descent_article},
\[
    \widetilde F(\vb z_k+t\vb s_k)
    \le
    \widetilde F(\vb z_k)
    -t\left(C-\frac{L}{2}t\right)\norm{\vb s_k}_2^2.
\]
If $t\le 2(C-\sigma)/L$, then $C-(L/2)t\ge\sigma$, which gives the desired inequality.
\end{proof}

Define the adaptive step size by backtracking:
\begin{equation}
    \label{eq:armijo_rule_article}
    t_k=\max\left\{
    t\in\{t^0\theta^i\mid i\in\NN_0\}
    \;\middle|\;
    \widetilde F(\vb z_k+t\vb s_k)
    \le
    \widetilde F(\vb z_k)-t\sigma\norm{\vb s_k}_2^2
    \right\},
\end{equation}
where $t^0>0$, $\theta\in(0,1)$, and $\sigma\in(0,C)$.

\begin{proposition}[Uniform lower bound for accepted steps]
\label{prop:armijo_step_bound_article}
Let $t_{\min}=\min\{t^0,\theta t_{\mathrm{suf}}\}$. If $t_k$ is defined by \eqref{eq:armijo_rule_article}, then
\[
    t_{\min}\le t_k\le t^0,
    \qquad k\in\NN_0.
\]
\end{proposition}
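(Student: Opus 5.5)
The upper bound $t_k\le t^0$ is immediate. The candidate set in \eqref{eq:armijo_rule_article} is contained in $\{t^0\theta^i\mid i\in\NN_0\}$, and every element of that set is at most $t^0$ because $\theta\in(0,1)$. The real content of the statement is therefore that the backtracking terminates and that the step it returns is bounded below. I would first show the admissible set is nonempty, so that the maximum exists. By \Cref{prop:armijo_sufficient_article}, every $t\in(0,t_{\mathrm{suf}}]$ satisfies the sufficient-decrease inequality. Since $t^0\theta^i\to0$, some index $i$ has $t^0\theta^i\le t_{\mathrm{suf}}$, and that trial step is accepted. So the set is nonempty. Its elements form a decreasing geometric sequence bounded above by $t^0$, so the maximum is attained at the smallest accepted index $i_k$, and $t_k=t^0\theta^{i_k}$.

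For the lower bound I would split into two cases. In the first case $i_k=0$, so $t_k=t^0\ge t_{\min}$. In the second case $i_k\ge1$. Here the previous trial step $t^0\theta^{i_k-1}=t_k/\theta$ was rejected. By \Cref{prop:armijo_sufficient_article}, any rejected positive step must exceed $t_{\mathrm{suf}}$, since otherwise it would have satisfied the inequality. Hence $t_k/\theta>t_{\mathrm{suf}}$, that is, $t_k>\theta t_{\mathrm{suf}}\ge t_{\min}$. Combining the two cases gives $t_{\min}\le t_k\le t^0$ for every $k\in\NN_0$.

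The only delicate point is that \Cref{prop:armijo_sufficient_article} must apply uniformly in $k$. This holds because $t_{\mathrm{suf}}=2(C-\sigma)/L$ depends only on $L$, $c_1$, $c_2$ and $\sigma$, not on the iterate. That in turn relies on \Cref{lem:armijo_descent_article} giving the constant $C=c_1/c_2^2$ independently of $k$, which holds under \Cref{ass:directions_article,ass:dir_tol_article}.

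The degenerate case $\vb s_k=\vb 0$ also needs checking. There the inequality holds trivially for $t=t^0$, so $t_k=t^0$ and the bound still holds. I expect no real obstacle beyond stating that the hypotheses \Cref{ass:smooth_article,ass:directions_article,ass:dir_tol_article} are implicitly in force, as in \Cref{prop:armijo_sufficient_article}, together with $\sigma\in(0,C)$.
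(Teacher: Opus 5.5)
Your proposal is correct and follows the paper's proof: the upper bound comes from the candidate set, and the lower bound from the case split $i_k=0$ versus $i_k>0$, where rejection of $t_k/\theta$ forces $t_k/\theta>t_{\mathrm{suf}}$ by the sufficient-decrease proposition. Your extra remarks are also correct and make explicit what the paper leaves implicit: the admissible set is nonempty, $t_{\mathrm{suf}}$ does not depend on $k$, and the case $\vb s_k=\vb 0$ is trivial.
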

\begin{proof}
The upper bound $t_k\le t^0$ follows from the definition of the candidate set. Let $t_k=t^0\theta^{i_k}$. If $i_k=0$, then $t_k=t^0\ge t_{\min}$. If $i_k>0$, the previous candidate $t_{\mathrm{prev}}=t_k/\theta$ was rejected. By \Cref{prop:armijo_sufficient_article}, every step not larger than $t_{\mathrm{suf}}$ satisfies the sufficient decrease condition; hence rejection implies $t_{\mathrm{prev}}>t_{\mathrm{suf}}$. Therefore $t_k>\theta t_{\mathrm{suf}}\ge t_{\min}$.
\end{proof}

\begin{theorem}[Convergence for adaptive Armijo-type step sizes]
\label{thm:conv_armijo_article}
Suppose \Cref{ass:smooth_article,ass:bounded_article,ass:directions_article,ass:dir_tol_article} hold, and let $t_k$ be defined by \eqref{eq:armijo_rule_article}. Then
\[
    \lim_{k\to\infty}\grad\widetilde F(\vb z_k)=\vb{0}_{\RR^m},
    \qquad
    \lim_{k\to\infty}\overline{\grad}\widetilde F(\vb z_k)=\vb{0}_{\RR^m}.
\]
Moreover, every limit point of $(\vb z_k)_{k\in\NN_0}$ is stationary.
\end{theorem}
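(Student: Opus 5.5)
The plan is to combine the sufficient-decrease inequality enforced by the backtracking rule \eqref{eq:armijo_rule_article} with the uniform step lower bound from \Cref{prop:armijo_step_bound_article}. This yields summability of $\norm{\grad\widetilde F(\vb z_k)}_2^2$ directly, without needing \Cref{lem:e_inf_article}.

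First I will check that the backtracking rule is well defined. By \Cref{prop:armijo_sufficient_article}, every $t\le t_{\mathrm{suf}}$ satisfies the acceptance test. Since $t^0\theta^i\to0$, some candidate is accepted, so the maximum in \eqref{eq:armijo_rule_article} exists. For every $k$ the accepted step then satisfies
\[
    \widetilde F(\vb z_{k+1})\le\widetilde F(\vb z_k)-t_k\sigma\norm{\vb s_k}_2^2
    \le\widetilde F(\vb z_k)-t_{\min}\sigma\norm{\vb s_k}_2^2,
\]
where the second inequality uses $t_k\ge t_{\min}>0$.

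The key step is to convert $\norm{\vb s_k}_2^2$ into a multiple of $\norm{\grad\widetilde F(\vb z_k)}_2^2$. \Cref{thm:inex_to_ex_gdm_article} gives only an upper bound on $\norm{\vb s_k}_2$ in terms of the gradient. For a lower bound I will combine \eqref{eq:gdm_exact_1_article} with Cauchy--Schwarz:
\[
    c_1\norm{\grad\widetilde F(\vb z_k)}_2^2\le-\grad\widetilde F(\vb z_k)\transp\vb s_k\le\norm{\grad\widetilde F(\vb z_k)}_2\norm{\vb s_k}_2.
\]
This gives $\norm{\vb s_k}_2\ge c_1\norm{\grad\widetilde F(\vb z_k)}_2$, which holds trivially when the gradient vanishes. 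Substituting, I obtain
\[
    \widetilde F(\vb z_{k+1})\le\widetilde F(\vb z_k)-t_{\min}\sigma c_1^2\norm{\grad\widetilde F(\vb z_k)}_2^2 .
\]

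Next I will telescope from $0$ to $K-1$ and use \Cref{ass:bounded_article}. This gives $t_{\min}\sigma c_1^2\sum_{k}\norm{\grad\widetilde F(\vb z_k)}_2^2\le\widetilde F(\vb z_0)-\widetilde F_{\inf}<\infty$, and hence $\grad\widetilde F(\vb z_k)\to\vb 0$. Convergence of the inexact gradients follows from \Cref{cor:ex_inex_zero_article}. Stationarity of limit points follows from continuity of $\grad\widetilde F$ along convergent subsequences, as in \Cref{thm:conv_bounded_article}.

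The only delicate point is the lower bound on $\norm{\vb s_k}_2$, because the acceptance test is stated in terms of $\norm{\vb s_k}_2^2$ rather than the gradient norm; the Cauchy--Schwarz argument above resolves it. I will also note that $t_{\mathrm{suf}}$, and hence $t_{\min}$, is independent of $k$, because $C=c_1/c_2^2$ and $L$ are uniform constants.
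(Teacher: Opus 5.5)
Your proof is correct and follows essentially the paper's argument. You use the Armijo decrease with $t_k\ge t_{\min}$, telescope against $\widetilde F_{\inf}$, and apply the Cauchy--Schwarz consequence $\norm{\grad\widetilde F(\vb z_k)}_2\le\norm{\vb s_k}_2/c_1$ of \eqref{eq:gdm_exact_1_article}; the only difference is ordering, since the paper sums $\norm{\vb s_k}_2^2$ first and then transfers to the gradient, whereas you transfer before summing, which is immaterial.
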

\begin{proof}
By \Cref{prop:armijo_sufficient_article}, the backtracking rule is well defined. By \Cref{prop:armijo_step_bound_article}, there exists $t_{\min}>0$ such that $t_k\ge t_{\min}$ for every $k$. Hence the accepted step satisfies
\begin{equation}
    \label{eq:armijo_descent_sum_article}
    \widetilde F(\vb z_{k+1})-\widetilde F(\vb z_k)
    \le
    -t_k\sigma\norm{\vb s_k}_2^2
    \le
    -t_{\min}\sigma\norm{\vb s_k}_2^2.
\end{equation}
Summing \eqref{eq:armijo_descent_sum_article} from $0$ to $K-1$ yields
\[
    t_{\min}\sigma\sum_{k=0}^{K-1}\norm{\vb s_k}_2^2
    \le
    \widetilde F(\vb z_0)-\widetilde F(\vb z_K)
    \le
    \widetilde F(\vb z_0)-\widetilde F_{\inf}.
\]
Therefore
\[
    \sum_{k=0}^{\infty}\norm{\vb s_k}_2^2<\infty,
    \qquad
    \norm{\vb s_k}_2\to0.
\]
From \eqref{eq:gdm_exact_1_article} and Cauchy--Schwarz,
\[
    c_1\norm{\grad\widetilde F(\vb z_k)}_2^2
    \le
    -\grad\widetilde F(\vb z_k)\transp\vb s_k
    \le
    \norm{\grad\widetilde F(\vb z_k)}_2\norm{\vb s_k}_2.
\]
If $\norm{\grad\widetilde F(\vb z_k)}_2\ne0$, dividing by that norm gives
\[
    \norm{\grad\widetilde F(\vb z_k)}_2
    \le
    \frac{1}{c_1}\norm{\vb s_k}_2.
\]
The inequality is trivial when the gradient is zero. Thus $\grad\widetilde F(\vb z_k)\to\vb{0}$. The convergence of $\overline{\grad}\widetilde F(\vb z_k)$ follows from \Cref{cor:ex_inex_zero_article}, and stationarity of limit points follows from continuity.
\end{proof}

\section{Airfoil flow model and body-fitted mesh generation}
\label{sec:flow_mesh_solver}

This section summarizes the computational model used to generate the state equation that appears later in the reduced optimization formulation. The model has three coupled components: a body-fitted structured mesh, a conservative full-potential flow discretization, and an approximate-factorization iteration for the nonlinear residual. The purpose of the section is not to introduce new analysis, but to put the solver components in a form suitable for the optimization and adjoint discussion.

\subsection{Geometry, computational coordinates, and mesh topology}
\label{subsec:mesh_topology}

Let $(x,y)$ denote the physical coordinates and let $(\xi,\eta)$ denote the computational coordinates. The mesh is an O-type body-fitted grid around the airfoil: the inner boundary is the airfoil surface and the outer boundary is a far-field curve. The computational grid is uniform in index space, so the discrete points are identified by $(i,j)$ with $i=1,\dots,I_{\max}$ and $j=1,\dots,J_{\max}$. Periodicity is imposed in the circumferential direction $\xi$, while Dirichlet data are prescribed on the inner and outer boundaries.

For the preliminary mesh-generation tests, the inner boundary may be a biconvex airfoil,
\begin{equation}
    y=2t x(1-x),\qquad 0\le x\le 1,
    \label{eq:biconvex_airfoil}
\end{equation}
where $t$ is the maximum thickness-to-chord ratio, or a symmetric four-digit NACA profile,
\begin{equation}
    y=5t\left(0.2969\sqrt{x}-0.1260x-0.3516x^2+0.2843x^3-0.1036x^4\right).
    \label{eq:naca0012_profile}
\end{equation}
The coefficient $-0.1036$ is used in the last term so that the trailing edge closes at $x=1$. The far-field boundary is taken as a circle centered at $x=1/2$ with radius $R_\infty$, so that
\begin{equation}
    \left(x-\frac12\right)^2+y^2=R_\infty^2.
    \label{eq:farfield_circle}
\end{equation}
In the optimization runs, this analytic description is replaced by the CST-parametrized boundary described next. Once the boundary coordinates are prescribed, the parabolic and elliptic mesh-generation equations below are unchanged.

\subsection{CST airfoil parametrization}
\label{subsec:cst_parametrization}

The optimization code represents the airfoil by a class-shape-transformation (CST) boundary. Let
\(\chi\in[0,1]\) denote the nondimensional chordwise coordinate. For each surface
\(s\in\{u,\ell\}\), corresponding to the upper and lower surfaces, define the class function
\begin{equation}
    C_{N_1,N_2}(\chi)=\chi^{N_1}(1-\chi)^{N_2},
    \label{eq:cst_class_function}
\end{equation}
and the Bernstein shape function of order \(n\) by
\begin{equation}
    S_s(\chi;\vb a^s)=
    \sum_{k=0}^{n} a^s_k B^n_k(\chi),
    \qquad
    B^n_k(\chi)=\binom{n}{k}\chi^k(1-\chi)^{n-k}.
    \label{eq:cst_shape_function}
\end{equation}
The corresponding CST surface equation, nondimensionalized by the chord \(c\), is
\begin{equation}
    \frac{y_s(\chi)}{c}
    =C_{N_1,N_2}(\chi)S_s(\chi;\vb a^s)+\chi\Delta_s,
    \qquad s\in\{u,\ell\},
    \label{eq:cst_surface_general}
\end{equation}
where \(\Delta_s\) is a trailing-edge offset. The implementation used in the present optimization takes
\begin{equation}
    n=5,
    \qquad
    N_1=\frac12,
    \qquad
    N_2=1,
    \qquad
    c=1,
    \qquad
    \Delta_u=\Delta_\ell=0.
    \label{eq:cst_parameters_used}
\end{equation}
Thus the design vector contains only the signed Bernstein coefficients of the two surfaces,
\begin{equation}
    \vb z=
    \begin{bmatrix}
        \vb a^u\\[0.25em]
        \vb a^\ell
    \end{bmatrix}
    =
    \begin{bmatrix}
        a^u_0&\cdots&a^u_5&a^\ell_0&\cdots&a^\ell_5
    \end{bmatrix}^{\mathsf T}
    \in\RR^{12}.
    \label{eq:cst_design_vector}
\end{equation}
The lower-surface coefficients are stored with their physical sign. For example, the closed-trailing-edge NACA0012 initialization used by the code is obtained from
\begin{equation}
\begin{aligned}
    \vb a^u_{\rm NACA0012}
    &=
    (0.17098638,\;0.15535516,\;0.15907811,\;0.13787830,\;0.14477407,\;0.14382457)^{\mathsf T},\\
    \vb a^\ell_{\rm NACA0012}
    &=-\vb a^u_{\rm NACA0012}.
\end{aligned}
\label{eq:cst_naca0012_initialization}
\end{equation}

The CST boundary is sampled at the same chordwise stations used by the mesh generator. With
\(I_h=(I_{\max}+1)/2\), the one-sided stretched coordinate is
\begin{equation}
    \sigma_i=\frac{1}{1+\exp\left[-\delta\left(\frac{i}{I_h}-\frac12\right)\right]},
    \qquad
    \chi_i=1-\frac{\sigma_i-\sigma_1}{\sigma_{I_h}-\sigma_1},
    \qquad
    \delta=12,
    \label{eq:cst_boundary_stretching}
\end{equation}
so that the marching order goes from the trailing edge to the leading edge on each side. The physical boundary points are
\begin{equation}
    x_i=x_0+c\chi_i,
    \qquad
    y^s_i=c\,\Big(C_{1/2,1}(\chi_i)S_s(\chi_i;\vb a^s)\Big),
    \qquad s\in\{u,\ell\},
    \label{eq:cst_boundary_points}
\end{equation}
with \(x_0=0\) in the present runs. These points define the inner Dirichlet boundary of the parabolic mesh and, after elliptic smoothing, the body-fitted mesh used by the full-potential solver.

\subsection{Parabolic initial mesh}
\label{subsec:parabolic_mesh}

The elliptic smoother requires an initial grid. The initial grid is generated by a parabolic marching construction from the airfoil boundary toward the outer boundary. The boundary points are first parametrized so that points are concentrated near the leading and trailing edges. A convenient stretching function is obtained from a shifted sigmoid,
\begin{equation}
    f_i=\frac{1}{1+\exp\left[-\delta\left(\frac{i}{I_h}-\frac12\right)\right]},
    \qquad
    g_i=1-\frac{f_i-f_1}{f_{I_h}-f_1},
    \label{eq:sigmoid_stretching}
\end{equation}
where $I_h=(I_{\max}+1)/2$ for an odd number of points in the periodic direction. The resulting values $g_i$ define the chordwise coordinates on one side of the airfoil; the opposite side is obtained by symmetry.

The parabolic generator is derived from the elliptic grid equations with control functions set to zero. For a generic coordinate $r\in\{x,y\}$, the continuous equation is
\begin{equation}
    A r_{\xi\xi}-2B r_{\xi\eta}+C r_{\eta\eta}=0,
    \label{eq:parabolic_base_equation}
\end{equation}
where
\begin{equation}
    A=x_\eta^2+y_\eta^2,
    \qquad
    B=x_\xi x_\eta+y_\xi y_\eta,
    \qquad
    C=x_\xi^2+y_\xi^2.
    \label{eq:grid_coefficients_ABC}
\end{equation}
Using centered second-order finite differences gives the discrete residual
\begin{equation}
    L(r)_{i,j}=A_{i,j}\delta_{\xi\xi}r_{i,j}
    -2B_{i,j}\delta_{\xi\eta}r_{i,j}
    +C_{i,j}\delta_{\eta\eta}r_{i,j}.
    \label{eq:mesh_residual_L}
\end{equation}
In the parabolic marching stage, the unknowns at a fixed level $j$ are obtained from a periodic tridiagonal system in the $\xi$ direction. Terms involving the next level $j+1$ are evaluated from a local reference mesh constructed between the previously known level $j-1$ and the prescribed outer boundary. The periodic tridiagonal systems are solved by a Sherman--Morrison correction of a nonperiodic tridiagonal solve.

\begin{figure}[htbp]
\centering
\begin{minipage}{0.48\linewidth}
\centering
\includegraphics[width=0.98\linewidth]{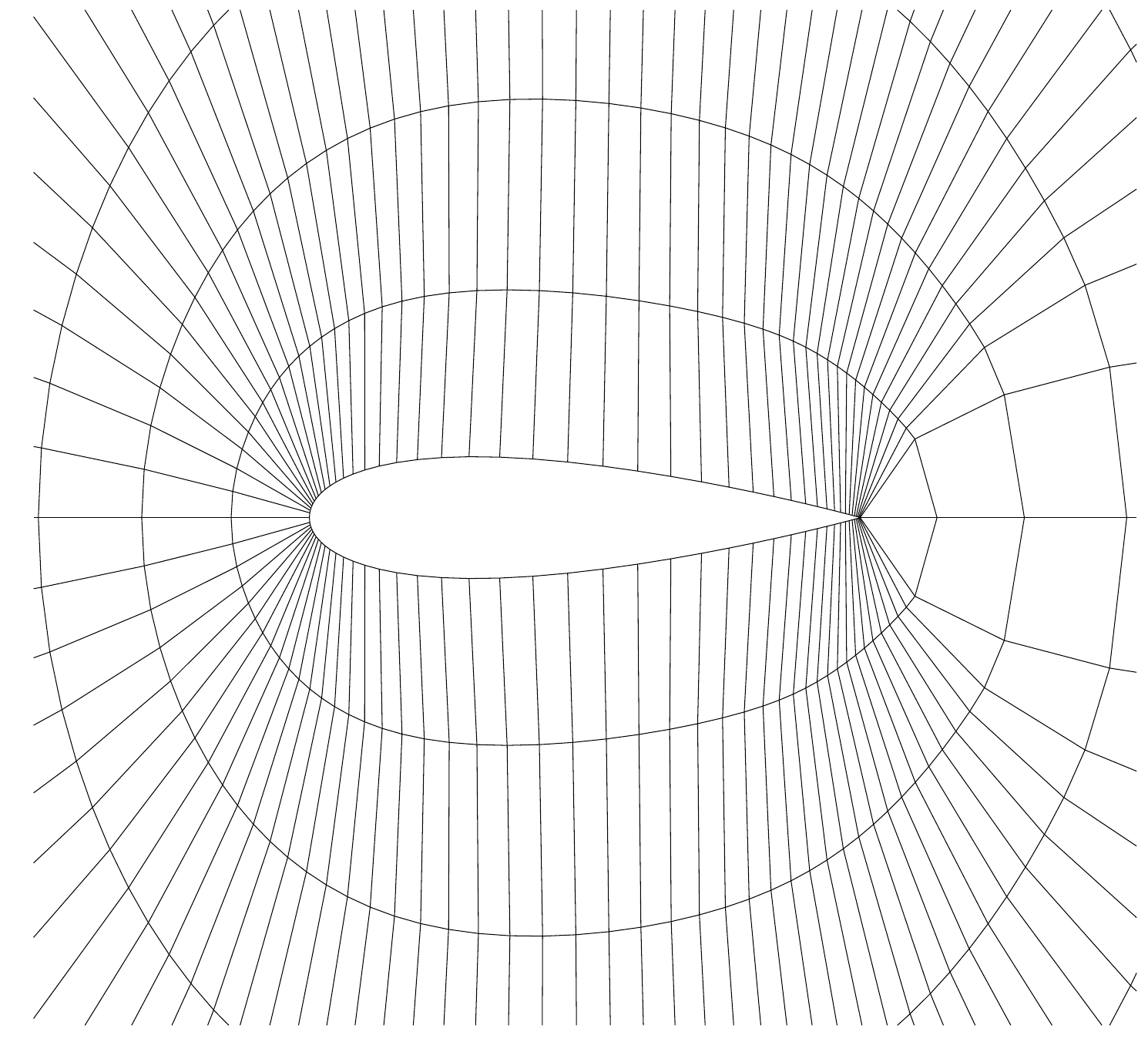}
\subcaption{Near-airfoil view.}
\end{minipage}\hfill
\begin{minipage}{0.48\linewidth}
\centering
\includegraphics[width=0.98\linewidth]{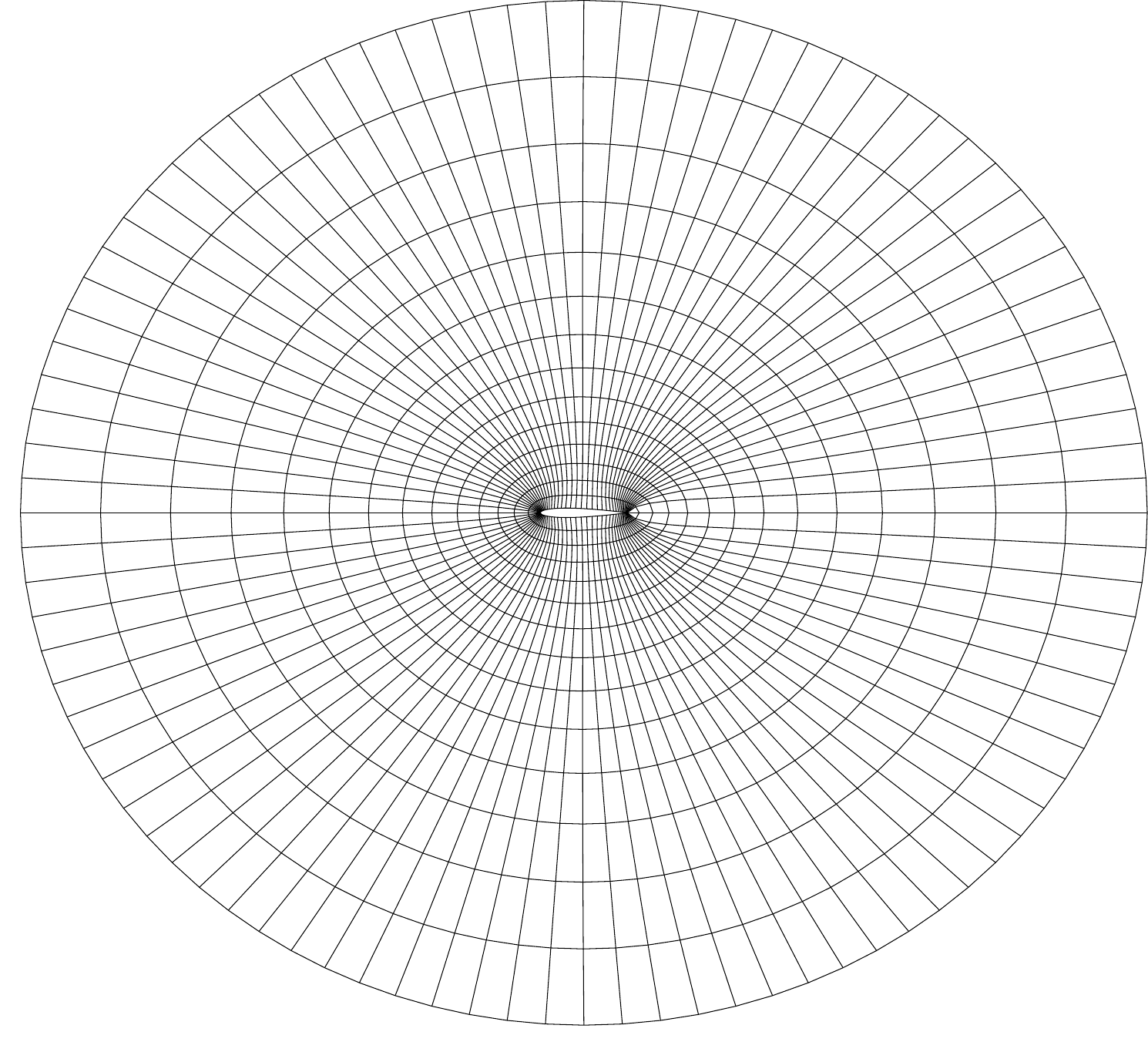}
\subcaption{Global O-grid view.}
\end{minipage}
\caption{Parabolic initial O-grid around the NACA0012 airfoil.}
\label{fig:parabolic_mesh_naca0012}
\end{figure}

\subsection{Elliptic mesh smoothing}
\label{subsec:elliptic_mesh}

The final mesh is obtained by solving the elliptic grid-generation equations. In physical space, the computational coordinates satisfy Poisson equations,
\begin{equation}
    \xi_{xx}+\xi_{yy}=P(\xi,\eta),
    \qquad
    \eta_{xx}+\eta_{yy}=Q(\xi,\eta),
    \label{eq:poisson_grid_physical}
\end{equation}
where $P$ and $Q$ are optional control functions. After inverting the dependent and independent variables, the equations in computational space become
\begin{subequations}
\label{eq:elliptic_grid_equations}
\begin{align}
    A x_{\xi\xi}-2B x_{\xi\eta}+C x_{\eta\eta}+D(Px_\xi+Qx_\eta)&=0,\\
    A y_{\xi\xi}-2B y_{\xi\eta}+C y_{\eta\eta}+D(Py_\xi+Qy_\eta)&=0,
\end{align}
\end{subequations}
with
\begin{equation}
    D=(x_\xi y_\eta-x_\eta y_\xi)^2.
    \label{eq:grid_D}
\end{equation}
In the baseline configuration used here, $P=Q=0$, so the grid is a Laplace grid. The residual operator is again the centered finite-difference operator \eqref{eq:mesh_residual_L} applied separately to $x$ and $y$.

The smoother can be solved by line relaxation or by an approximate-factorization iteration. In the ADI form used for the elliptic mesh, the correction $\Delta r^n$ for $r\in\{x,y\}$ satisfies
\begin{equation}
    \left(\alpha-A^n_{i,j}\delta_{\xi\xi}\right)
    \left(\alpha-C^n_{i,j}\delta_{\eta\eta}\right)\Delta r^n_{i,j}
    =\alpha\omega L(r^n)_{i,j}.
    \label{eq:elliptic_ADI}
\end{equation}
Equivalently, each iteration is split into two one-dimensional implicit stages,
\begin{subequations}
\label{eq:elliptic_ADI_two_steps}
\begin{align}
    \left(\alpha-A^n_{i,j}\delta_{\xi\xi}\right)f^n_{i,j}
    &=\alpha\omega L(r^n)_{i,j},\\
    \left(\alpha-C^n_{i,j}\delta_{\eta\eta}\right)\Delta r^n_{i,j}
    &=f^n_{i,j}.
\end{align}
\end{subequations}
Thus the mesh update is obtained from tridiagonal systems in alternating coordinate directions. The parameter $\alpha$ is interpreted as the inverse of a pseudo-time step: large values preferentially damp high-frequency errors, while small values preferentially damp low-frequency errors. A geometric sequence of $\alpha$ values may be cycled to accelerate convergence.

\begin{figure}[htbp]
\centering
\begin{minipage}{0.48\linewidth}
\centering
\includegraphics[width=0.98\linewidth]{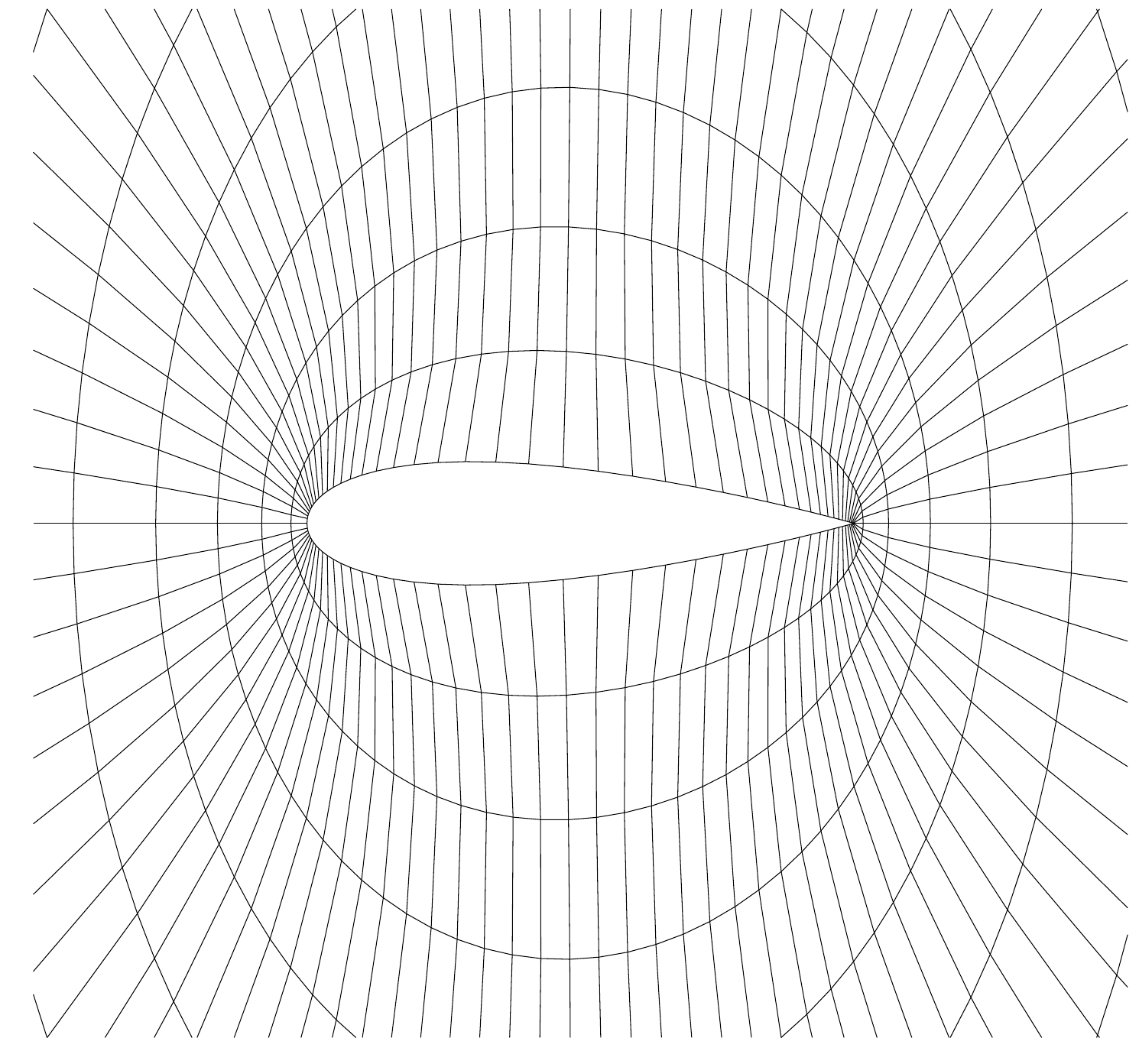}
\subcaption{Near-airfoil view.}
\end{minipage}\hfill
\begin{minipage}{0.48\linewidth}
\centering
\includegraphics[width=0.98\linewidth]{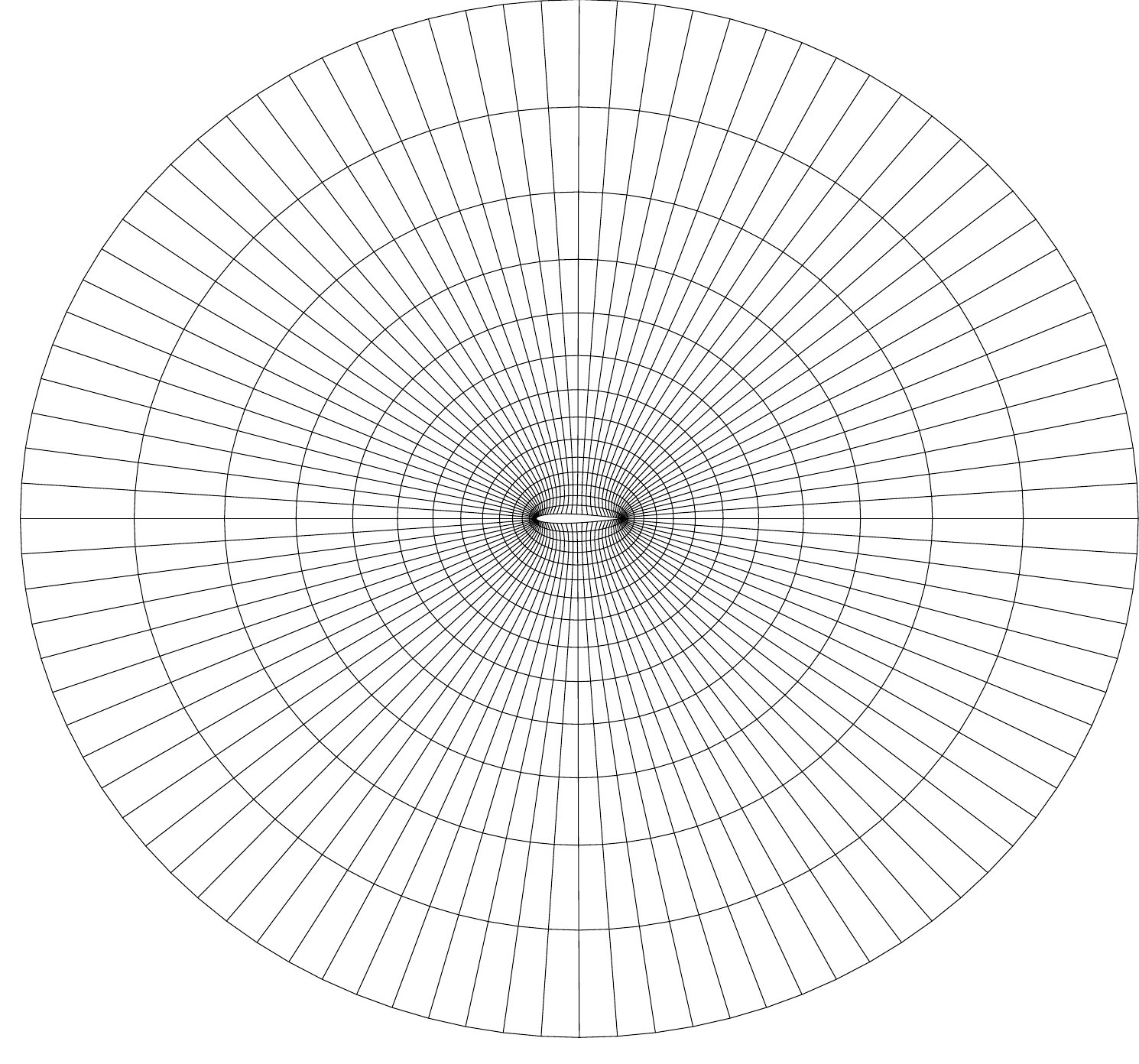}
\subcaption{Global O-grid view.}
\end{minipage}
\caption{Elliptically smoothed O-grid around the NACA0012 airfoil obtained from the parabolic initial mesh. }
\label{fig:elliptic_mesh_naca0012}
\end{figure}

\subsection{Conservative full-potential equation}
\label{subsec:full_potential_equation}

The flow solver uses the steady two-dimensional full-potential equation in conservative form. In general curvilinear coordinates, the equation is
\begin{equation}
    \pdv{}{\xi}\left(\frac{\rho U}{J}\right)
    +
    \pdv{}{\eta}\left(\frac{\rho V}{J}\right)=0,
    \label{eq:full_potential_conservative}
\end{equation}
where $U$ and $V$ are contravariant velocity components, $\rho$ is the density, and $J$ is the Jacobian of the coordinate transformation. The contravariant velocities are written in terms of the velocity potential $\phi$ as
\begin{subequations}
\label{eq:contravariant_velocity}
\begin{align}
    U &= A_1\phi_\xi+A_2\phi_\eta,\\
    V &= A_2\phi_\xi+A_3\phi_\eta,
\end{align}
\end{subequations}
where
\begin{equation}
    A_1=\xi_x^2+\xi_y^2,
    \qquad
    A_2=\xi_x\eta_x+\xi_y\eta_y,
    \qquad
    A_3=\eta_x^2+\eta_y^2.
    \label{eq:flow_metric_coefficients}
\end{equation}
The density is computed from the isentropic full-potential relation
\begin{equation}
    \rho=\left[1-\frac{\gamma-1}{\gamma+1}
    \left(U\phi_\xi+V\phi_\eta\right)\right]^{1/(\gamma-1)}.
    \label{eq:full_potential_density}
\end{equation}
The nondimensional free-stream velocity used to initialize the potential is
\begin{equation}
    U_\infty=\left(\frac{\gamma+1}{\gamma-1+2/M_\infty^2}\right)^{1/2},
    \label{eq:freestream_velocity}
\end{equation}
where $M_\infty$ is the free-stream Mach number. For a prescribed angle of attack $\alpha_{\rm aoa}$, the far-field potential is initialized consistently with
\begin{equation}
    \phi_\infty(x,y)=U_\infty\left(x\cos\alpha_{\rm aoa}+y\sin\alpha_{\rm aoa}\right).
    \label{eq:farfield_potential}
\end{equation}
On the airfoil surface, the solid-wall condition imposes tangency of the flow. In the curvilinear formulation this is expressed as
\begin{equation}
    V=0\qquad\text{on the airfoil boundary}.
    \label{eq:wall_tangency}
\end{equation}

\subsection{Artificial-density discretization and AF2 iteration}
\label{subsec:flow_discretization_af2}

The spatial residual follows a conservative artificial-density discretization of the Holst--Ballhaus type \cite{holst1979fast,holst1979implicit}. At a grid point $(i,j)$, the finite-difference residual is written as
\begin{equation}
    L(\phi)_{i,j}
    =\overleftarrow{\delta}_{\xi}\left(\frac{\tilde\rho U}{J}\right)_{i+1/2,j}
    +\overleftarrow{\delta}_{\eta}\left(\frac{\bar\rho V}{J}\right)_{i,j+1/2},
    \label{eq:flow_residual_discrete}
\end{equation}
where $\overleftarrow{\delta}_{\xi}$ and $\overleftarrow{\delta}_{\eta}$ are backward differences. The artificial density coefficients are
\begin{subequations}
\label{eq:artificial_density_coefficients}
\begin{align}
    \tilde\rho_{i+1/2,j}
    &=[(1-\nu)\rho]_{i+1/2,j}+\nu_{i+1/2,j}\rho_{i+r+1/2,j},\\
    \bar\rho_{i,j+1/2}
    &=[(1-\nu)\rho]_{i,j+1/2}+\nu_{i,j+1/2}\rho_{i,j+s+1/2}.
\end{align}
\end{subequations}
The upwind indices are selected from the signs of the contravariant velocities,
\begin{equation}
    r=\begin{cases}1,& U_{i+1/2,j}<0,\\ -1,& U_{i+1/2,j}>0,
    \end{cases}
    \qquad
    s=\begin{cases}1,& V_{i,j+1/2}<0,\\ -1,& V_{i,j+1/2}>0.
    \end{cases}
    \label{eq:upwind_indices}
\end{equation}
The switching function $\nu$ is activated in locally supersonic regions through a density-based criterion. In the implementation summarized here, it has the form
\begin{equation}
    \nu=\max\{0,(C_1-\rho)C_2C\},
    \qquad
    C_1=\left(\frac{2}{\gamma+1}\right)^{1/(\gamma-1)},
    \label{eq:switching_function}
\end{equation}
with the density value chosen consistently with the upwind direction.

The nonlinear residual equation $L(\phi)=0$ is solved by a pseudo-time correction scheme. With correction $C^n_{i,j}=\phi^{n+1}_{i,j}-\phi^n_{i,j}$, the standard iteration is
\begin{equation}
    N C^n_{i,j}+\omega L(\phi^n)_{i,j}=0,
    \label{eq:standard_correction_scheme}
\end{equation}
where $N$ is an approximate-factorization operator. The AF2 operator used for the O-grid full-potential problem is
\begin{equation}
    N=-\frac{1}{\alpha}
    \left(\alpha-\overrightarrow{\delta}_{\eta}A_j\right)
    \left(\alpha\overleftarrow{\delta}_{\eta}
    -\overleftarrow{\delta}_{\xi}A_i\overrightarrow{\delta}_{\xi}
    \pm\alpha\beta\overrightarrow{\delta}_{\xi}\right),
    \label{eq:AF2_operator}
\end{equation}
where
\begin{equation}
    A_i=\left(\frac{\tilde\rho A_1}{J}\right)_{i-1/2,j},
    \qquad
    A_j=\left(\frac{\bar\rho A_3}{J}\right)_{i,j-1/2}.
    \label{eq:AF2_Ai_Aj}
\end{equation}
The corresponding two-step form is
\begin{subequations}
\label{eq:AF2_two_steps}
\begin{align}
    \left(\alpha-\overrightarrow{\delta}_{\eta}A_j\right)f^n_{i,j}
    &=\alpha\omega L(\phi^n)_{i,j},\\
    \left(\alpha\overleftarrow{\delta}_{\eta}
    -\overleftarrow{\delta}_{\xi}A_i\overrightarrow{\delta}_{\xi}
    \pm\alpha\beta\overrightarrow{\delta}_{\xi}\right)C^n_{i,j}
    &=f^n_{i,j}.
\end{align}
\end{subequations}
The first step gives bidiagonal systems in the $\eta$ direction; the second gives tridiagonal systems in the $\xi$ direction. The sign and direction of the artificial damping term are selected so that the discretization is upwind with respect to the flow direction on both the extrados and intrados of the O-grid. In subsonic regions a fixed value of $\beta$ is used; in supersonic regions $\beta$ is adjusted according to the residual behavior and bounded between prescribed lower and upper limits.

After convergence, the physical velocity components are recovered from
\begin{equation}
    v_x=\phi_x=\xi_x\phi_\xi+\eta_x\phi_\eta,
    \qquad
    v_y=\phi_y=\xi_y\phi_\xi+\eta_y\phi_\eta.
    \label{eq:physical_velocity_components}
\end{equation}
The pressure coefficient is computed from
\begin{equation}
    C_p=\frac{p-p_\infty}{\frac12\rho_\infty U_\infty^2},
    \qquad
    p=\frac{\gamma+1}{2\gamma}\rho^\gamma.
    \label{eq:pressure_coefficient}
\end{equation}

Figure~\ref{fig:solver_fields_near_body} reports representative converged fields obtained after the AF2 iteration: the density field, the pressure-coefficient field, and the two Cartesian velocity components recovered from the converged potential.

\begin{figure}[htbp]
\centering

\begin{minipage}{0.47\linewidth}
\centering
\includegraphics[width=\linewidth]{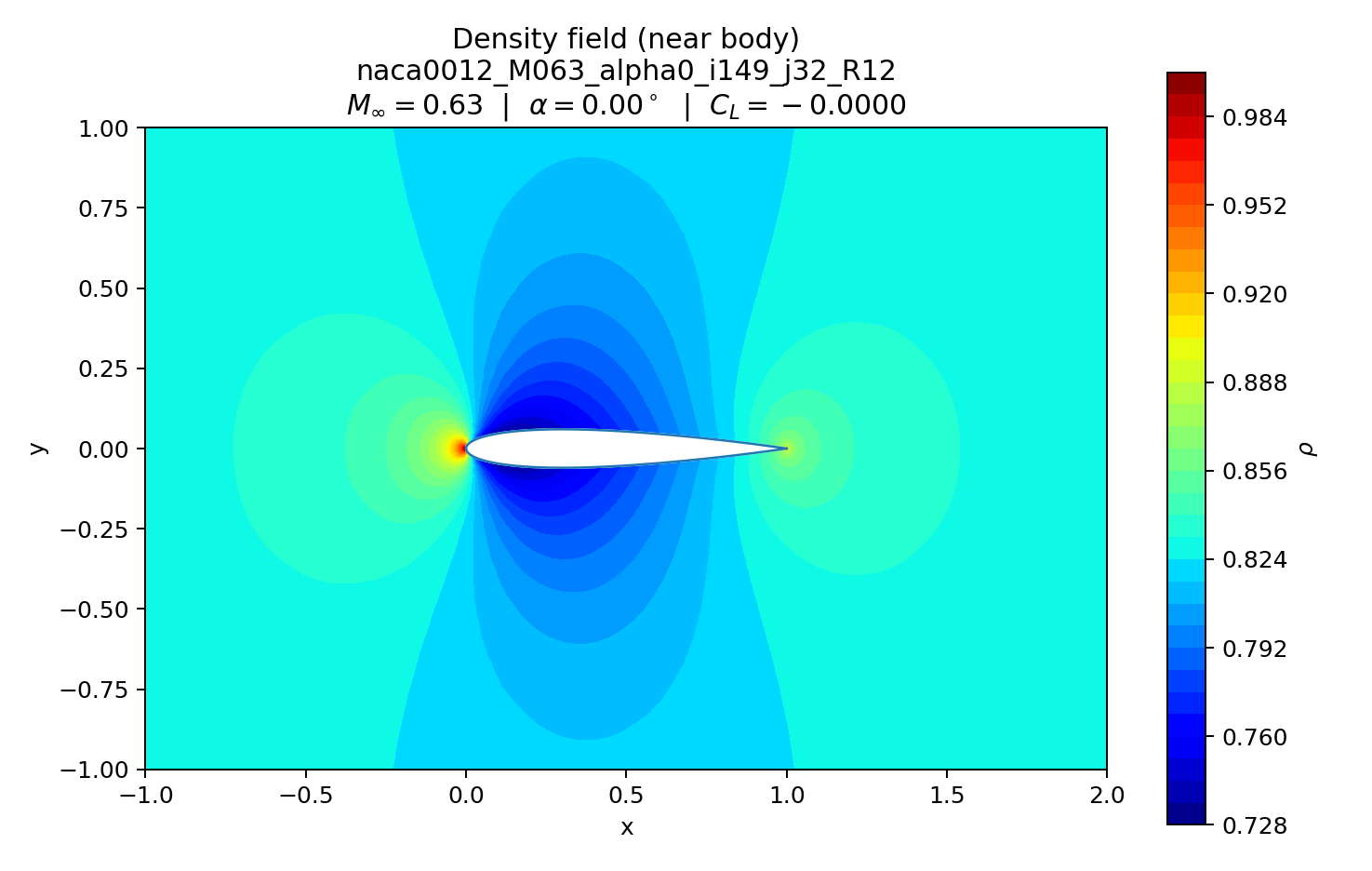}
\subcaption{Density field.}
\end{minipage}\hfill
\begin{minipage}{0.47\linewidth}
\centering
\includegraphics[width=\linewidth]{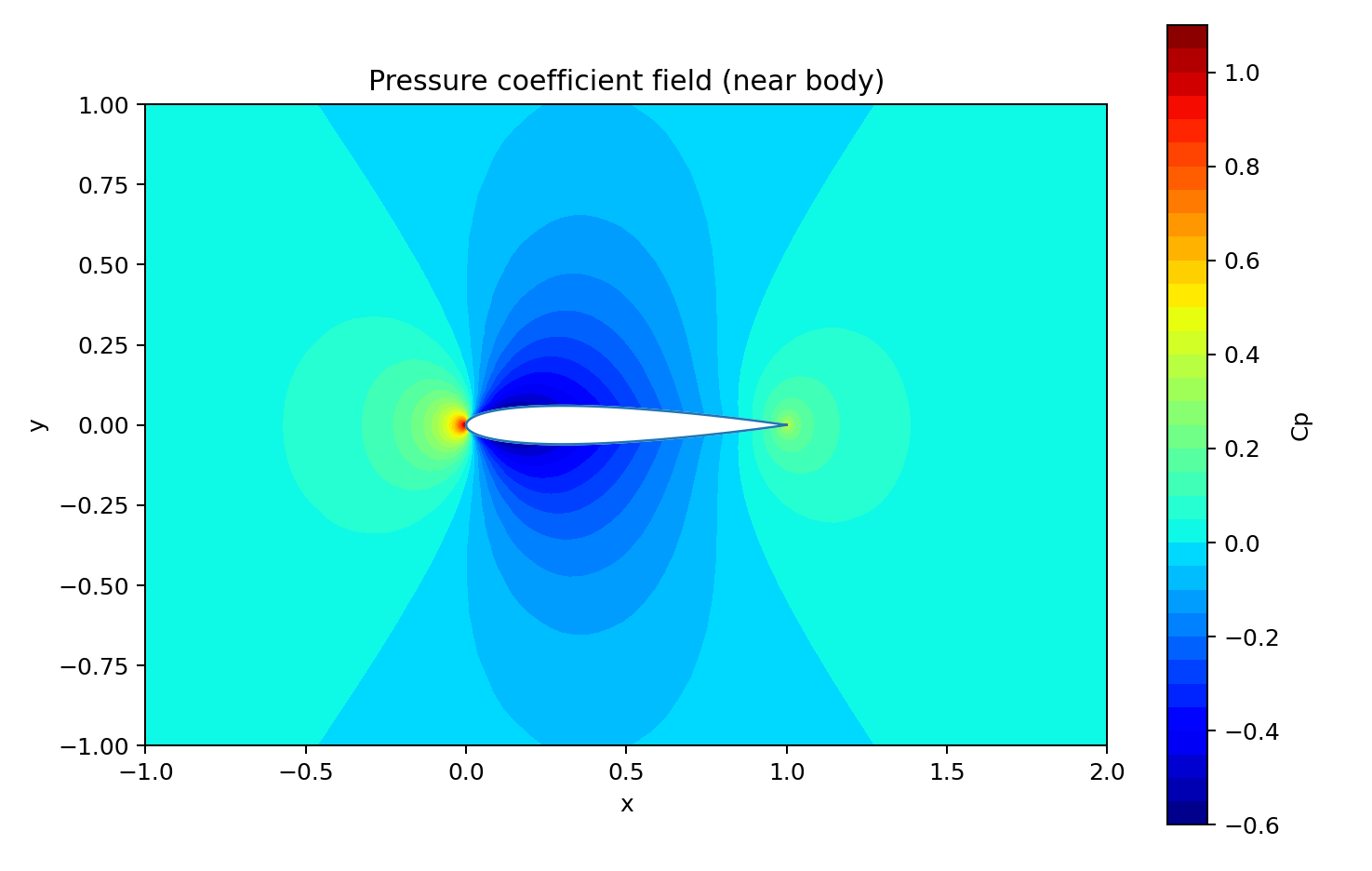}
\subcaption{Pressure-coefficient field $C_p$.}
\end{minipage}

\vspace{0.8em}

\begin{minipage}{0.47\linewidth}
\centering
\includegraphics[width=\linewidth]{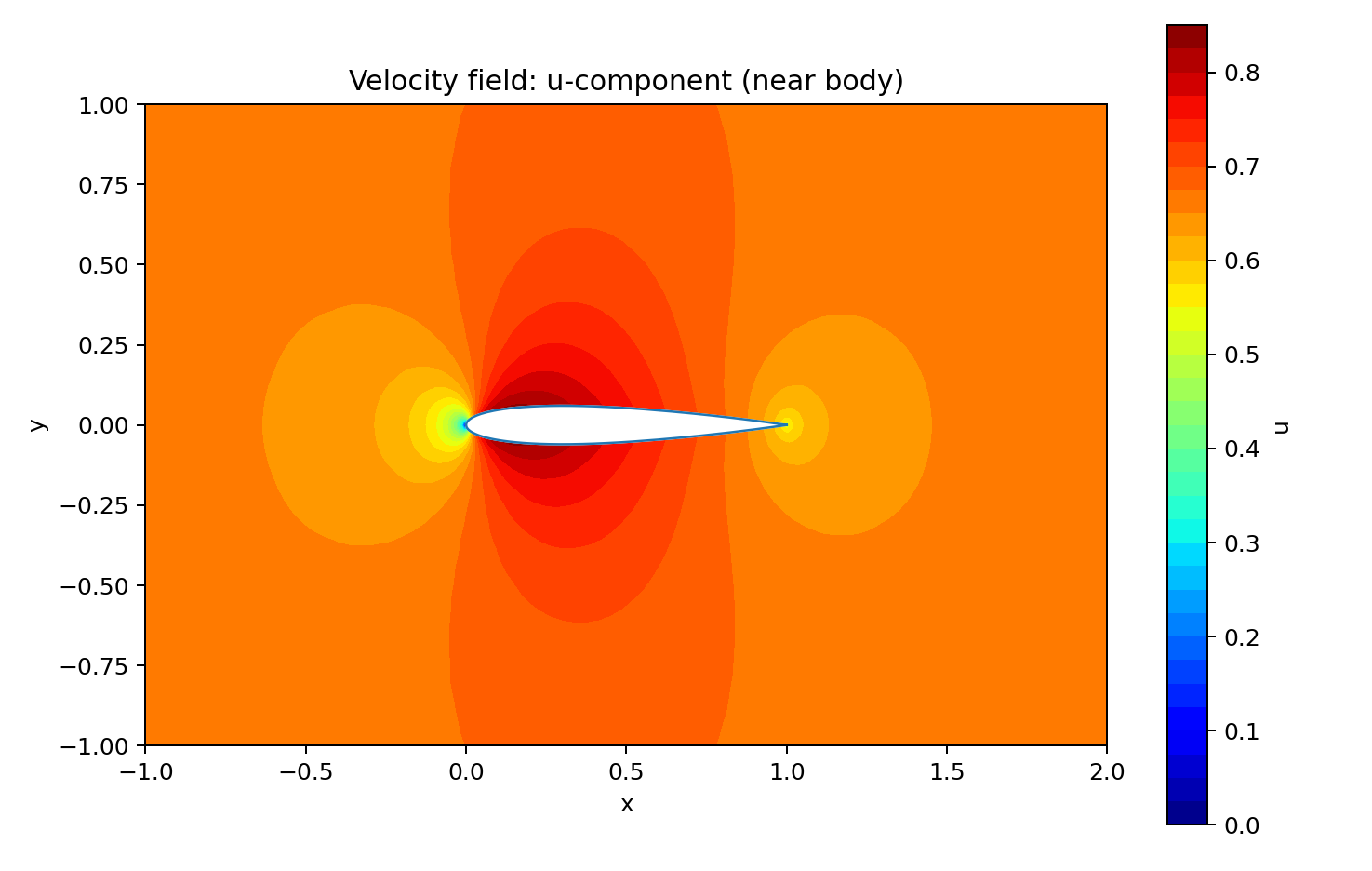}
\subcaption{Horizontal velocity component $v_x$.}
\end{minipage}\hfill
\begin{minipage}{0.47\linewidth}
\centering
\includegraphics[width=\linewidth]{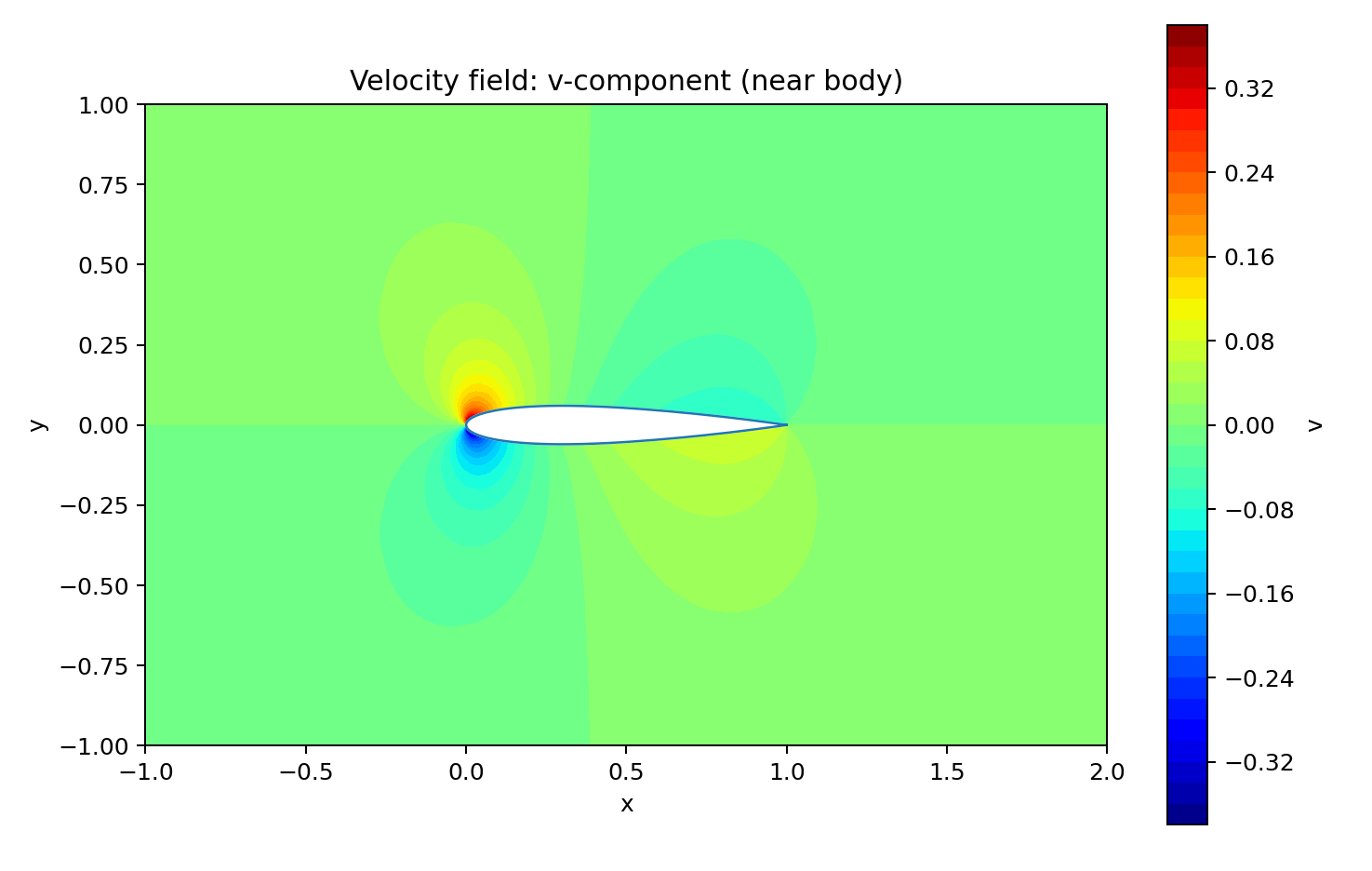}
\subcaption{Vertical velocity component $v_y$.}
\end{minipage}

\caption{Converged full-potential solver fields near the airfoil for the NACA0012 test case. The plots show the density field, pressure-coefficient field, and the two Cartesian velocity components obtained on the elliptic body-fitted mesh.}
\label{fig:solver_fields_near_body}
\end{figure}

\newpage

\section{CST pressure-matching airfoil optimization and solver adjoint}
\label{sec:airfoil_optimization}

This section states the optimization problem implemented around the full-potential solver. The notation is kept abstract at the adjoint level, but the dependency chain follows the current implementation: the CST vector defines the airfoil boundary, the boundary defines the body-fitted mesh, the mesh supports the full-potential solve, and the upper-surface pressure coefficient defines the scalar objective.

\subsection{Design-dependent state construction}
\label{subsec:airfoil_design_state_construction}

The design variable is the closed-trailing-edge CST vector
\(
    \vb z\in\RR^{12}
\)
introduced in \eqref{eq:cst_design_vector}. For each admissible \(\vb z\), the surface equations \eqref{eq:cst_surface_general} generate the inner boundary points \((x_i^b(\vb z),y_i^b(\vb z))\). These points are used as Dirichlet data for the parabolic mesh generator and then for the elliptic mesh smoother. We write the active mesh degrees of freedom as
\begin{equation}
    \vb q=\vb q(\vb z)\in\RR^{n_q},
    \qquad
    \vb R_m(\vb q,\vb z)=\vb 0,
    \label{eq:solver_mesh_constraint_detailed}
\end{equation}
where \(\vb R_m\) denotes the discrete mesh residual after the boundary values have been fixed by the CST parametrization. In the actual code, \(\vb q\) represents the active elliptic-mesh coordinates, while the prescribed airfoil and far-field boundaries are treated as boundary data.

On the resulting mesh, the full-potential solver computes a discrete flow state
\begin{equation}
    \vb u=\vb u(\vb q(\vb z))\in\RR^{n_u},
    \qquad
    \vb R_f(\vb u,\vb q)=\vb 0.
    \label{eq:solver_flow_constraint_detailed}
\end{equation}
For the lifting cases, the circulation/Kutta unknown is included in \(\vb u\), and \(\vb R_f\) contains both the active full-potential residual equations and the scalar Kutta residual. The particular residual entries are those of the conservative full-potential discretization described in \Cref{subsec:full_potential_equation,subsec:flow_discretization_af2}; for the optimization derivation below only the abstract differentiable mapping in \eqref{eq:solver_flow_constraint_detailed} is needed.

The complete state dependency is therefore
\begin{equation}
    \vb z
    \longmapsto
    \text{CST boundary}
    \longmapsto
    \vb q(\vb z)
    \longmapsto
    \vb u(\vb q(\vb z))
    \longmapsto
    \vb C_p(\vb u,\vb q).
    \label{eq:solver_dependency_chain}
\end{equation}
This is the chain used by the solver when evaluating the objective and its gradient.

\subsection{Implemented pressure-matching objective}
\label{subsec:airfoil_objective}

Let \(\mathcal P_u\) denote the selected upper-surface pressure stations. In the current implementation these are the nodal upper-surface points from the upper trailing-edge node to the leading-edge node. If \(C_{p,p}(\vb u,\vb q)\) is the computed pressure coefficient at station \(p\in\mathcal P_u\), and if \(C^{\rm ref}_{p,p}\) is the corresponding reference value, the scalar objective is
\begin{equation}
    \mathcal J(\vb u,\vb q;\vb C_p^{\rm ref})
    =\frac12\sum_{p\in\mathcal P_u}
    \left(C_{p,p}(\vb u,\vb q)-C^{\rm ref}_{p,p}\right)^2.
    \label{eq:airfoil_cp_objective}
\end{equation}
Equivalently, with \(\vb C_{p,u}(\vb u,\vb q)\) collecting the upper-surface values,
\begin{equation}
    \mathcal J(\vb u,\vb q;\vb C_p^{\rm ref})
    =\frac12
    \norm{\vb C_{p,u}(\vb u,\vb q)-\vb C_{p,u}^{\rm ref}}_2^2.
    \label{eq:airfoil_cp_objective_vector}
\end{equation}
No weighting or additional geometric regularization is included in the expression above. The default reference vector is obtained from the closed-trailing-edge NACA0012 CST design in \eqref{eq:cst_naca0012_initialization}; the same formulation also allows replacing that vector by externally supplied reference pressure data.

The reduced objective is
\begin{equation}
    \widetilde{\mathcal J}(\vb z)
    :=
    \mathcal J(\vb u(\vb q(\vb z)),\vb q(\vb z);\vb C_p^{\rm ref}).
    \label{eq:airfoil_reduced_objective}
\end{equation}
The corresponding simulation-constrained problem can be written as
\begin{subequations}
\label{eq:airfoil_reduced_optimization_problem}
\begin{align}
    \min_{\vb z,\vb q,\vb u}\quad
    &\frac12
    \norm{\vb C_{p,u}(\vb u,\vb q)-\vb C_{p,u}^{\rm ref}}_2^2,
    \label{eq:airfoil_opt_obj}\\
    \text{subject to}\quad
    &\vb R_m(\vb q,\vb z)=\vb 0,
    \label{eq:airfoil_opt_mesh}\\
    &\vb R_f(\vb u,\vb q)=\vb 0,
    \label{eq:airfoil_opt_flow}\\
    &\text{the generated body-fitted mesh remains valid.}
    \label{eq:airfoil_opt_meshvalid}
\end{align}
\end{subequations}
The last condition represents the mesh-quality requirements used in practice: the elliptic solve must produce a nondegenerate body-fitted grid, the Jacobian must remain positive, and the nonlinear flow solver must converge to the requested tolerance before the objective and gradient are accepted.

\subsection{Direct differentiation and abstract solver adjoint}
\label{subsec:airfoil_solver_adjoint}

The adjoint used by the optimization solver is obtained by differentiating the
fully discrete coupled system. Let the mesh variables and flow variables satisfy
\begin{align}
    \vb R_m(\vb q,\vb z) &= \vb 0,
    \label{eq:abstract_mesh_residual_solver}\\
    \vb R_f(\vb u,\vb q) &= \vb 0,
    \label{eq:abstract_flow_residual_solver}
\end{align}
where \(\vb z\in\mathbb R^{n_z}\) denotes the CST design vector,
\(\vb q\in\mathbb R^{n_q}\) denotes the discrete mesh variables, and
\(\vb u\in\mathbb R^{n_u}\) denotes the discrete flow state. The reduced
objective is
\begin{equation}
    \widetilde{\mathcal J}(\vb z)
    =
    \mathcal J(\vb u(\vb q(\vb z)),\vb q(\vb z),\vb z).
    \label{eq:reduced_objective_abstract_solver}
\end{equation}
All derivatives below are evaluated at the current discrete solution
\((\vb u(\vb q(\vb z)),\vb q(\vb z),\vb z)\).

By the chain rule,
\begin{equation}
    \dv{\widetilde{\mathcal J}}{\vb z}
    =
    \pdv{\mathcal J}{\vb z}
    +
    \pdv{\mathcal J}{\vb u}
    \dv{\vb u}{\vb z}
    +
    \pdv{\mathcal J}{\vb q}
    \dv{\vb q}{\vb z}.
    \label{eq:abstract_chain_rule_solver}
\end{equation}
The sensitivities of the coupled state follow from differentiating the two
residual equations:
\begin{align}
    \pdv{\vb R_f}{\vb u}
    \dv{\vb u}{\vb z}
    +
    \pdv{\vb R_f}{\vb q}
    \dv{\vb q}{\vb z}
    &= \vb 0,
    \label{eq:abstract_flow_sensitivity_solver}\\
    \pdv{\vb R_m}{\vb q}
    \dv{\vb q}{\vb z}
    +
    \pdv{\vb R_m}{\vb z}
    &= \vb 0.
    \label{eq:abstract_mesh_sensitivity_solver}
\end{align}
The purpose of the adjoint formulation is to eliminate the explicit sensitivity
matrices \(\dv{\vb u}{\vb z}\) and \(\dv{\vb q}{\vb z}\).

First, define the flow adjoint \(\vb\lambda_f\in\mathbb R^{n_u}\) by
\begin{equation}
    \left[
        \pdv{\vb R_f}{\vb u}
    \right]^{\mathsf T}
    \vb\lambda_f
    =
    \left[
        \pdv{\mathcal J}{\vb u}
    \right]^{\mathsf T}.
    \label{eq:abstract_flow_adjoint_solver}
\end{equation}
Equivalently,
\begin{equation}
    \vb\lambda_f^{\mathsf T}
    \pdv{\vb R_f}{\vb u}
    =
    \pdv{\mathcal J}{\vb u}.
    \label{eq:abstract_flow_adjoint_row_solver}
\end{equation}
Multiplying \eqref{eq:abstract_flow_sensitivity_solver} on the left by
\(\vb\lambda_f^{\mathsf T}\) and using
\eqref{eq:abstract_flow_adjoint_row_solver} gives
\begin{equation}
    \pdv{\mathcal J}{\vb u}
    \dv{\vb u}{\vb z}
    =
    -
    \vb\lambda_f^{\mathsf T}
    \pdv{\vb R_f}{\vb q}
    \dv{\vb q}{\vb z}.
    \label{eq:abstract_flow_contribution_solver}
\end{equation}
Substitution into \eqref{eq:abstract_chain_rule_solver} yields
\begin{equation}
    \dv{\widetilde{\mathcal J}}{\vb z}
    =
    \pdv{\mathcal J}{\vb z}
    +
    \left[
        \pdv{\mathcal J}{\vb q}
        -
        \vb\lambda_f^{\mathsf T}
        \pdv{\vb R_f}{\vb q}
    \right]
    \dv{\vb q}{\vb z}.
    \label{eq:abstract_chain_after_flow_adjoint_solver}
\end{equation}

Now define the mesh adjoint \(\vb\lambda_m\in\mathbb R^{n_q}\) by
\begin{equation}
    \left[
        \pdv{\vb R_m}{\vb q}
    \right]^{\mathsf T}
    \vb\lambda_m
    =
    \left[
        \pdv{\mathcal J}{\vb q}
    \right]^{\mathsf T}
    -
    \left[
        \pdv{\vb R_f}{\vb q}
    \right]^{\mathsf T}
    \vb\lambda_f .
    \label{eq:abstract_mesh_adjoint_solver}
\end{equation}
Thus,
\begin{equation}
    \vb\lambda_m^{\mathsf T}
    \pdv{\vb R_m}{\vb q}
    =
    \pdv{\mathcal J}{\vb q}
    -
    \vb\lambda_f^{\mathsf T}
    \pdv{\vb R_f}{\vb q}.
    \label{eq:abstract_mesh_adjoint_row_solver}
\end{equation}
Using the mesh sensitivity equation
\eqref{eq:abstract_mesh_sensitivity_solver}, we obtain
\begin{equation}
    \left[
        \pdv{\mathcal J}{\vb q}
        -
        \vb\lambda_f^{\mathsf T}
        \pdv{\vb R_f}{\vb q}
    \right]
    \dv{\vb q}{\vb z}
    =
    -
    \vb\lambda_m^{\mathsf T}
    \pdv{\vb R_m}{\vb z}.
    \label{eq:abstract_mesh_contribution_solver}
\end{equation}
Therefore, the total derivative of the reduced objective is
\begin{equation}
    \boxed{
    \dv{\widetilde{\mathcal J}}{\vb z}
    =
    \pdv{\mathcal J}{\vb z}
    -
    \vb\lambda_m^{\mathsf T}
    \pdv{\vb R_m}{\vb z}
    }.
    \label{eq:abstract_reduced_gradient_row_solver}
\end{equation}
Equivalently, writing the reduced gradient as a column vector,
\begin{equation}
    \boxed{
    \grad \widetilde{\mathcal J}(\vb z)
    =
    \left[
        \pdv{\mathcal J}{\vb z}
    \right]^{\mathsf T}
    -
    \left[
        \pdv{\vb R_m}{\vb z}
    \right]^{\mathsf T}
    \vb\lambda_m
    }.
    \label{eq:abstract_reduced_gradient_column_solver}
\end{equation}
For the pressure-matching objective used here, there is no explicit dependence
on \(\vb z\) once the mesh and flow state are fixed, unless an additional
regularization term is introduced. Hence
\(\pdv{\mathcal J}{\vb z}=\vb 0\) for the unregularized objective, and the
gradient reduces to the final contraction with
\(\pdv{\vb R_m}{\vb z}\).

This is the discrete adjoint formulation used in the optimization solver. No
continuous adjoint equation is introduced: the Jacobians in
\eqref{eq:abstract_flow_adjoint_solver} and
\eqref{eq:abstract_mesh_adjoint_solver} are the Jacobians of the discrete mesh
and full-potential residuals. In the implementation, these discrete Jacobians
and the corresponding objective derivatives were assembled using automatic
differentiation.

\subsection{Fixed-step gradient iteration}
\label{subsec:airfoil_gradient_iteration}

The optimization experiments use the classical gradient method with a fixed step size. Given the current CST vector \(\vb z_k\), the solver evaluates \(\widetilde{\mathcal J}(\vb z_k)\), computes \(\grad\widetilde{\mathcal J}(\vb z_k)\) through the coupled adjoint system above, and updates
\begin{equation}
    \vb z_{k+1}
    =\vb z_k-\alpha_{\rm gd}\grad\widetilde{\mathcal J}(\vb z_k),
    \qquad
    \alpha_{\rm gd}>0.
    \label{eq:airfoil_fixed_step_gd}
\end{equation}
The convergence results later in the paper are stated for a more general inexact direction framework. The fixed-step CST optimization used in the solver is the special case in which the direction is \(-\grad\widetilde{\mathcal J}(\vb z_k)\), up to the numerical error introduced by the inexact mesh, flow, and adjoint solves.

\section{Optimization Results}
\label{sec:airfoil_results}

The numerical experiment reported in this section was performed on a
body-fitted elliptic mesh with \(I_{\max}=49\) points in the circumferential
direction and \(J_{\max}=31\) points in the normal direction. The outer
boundary was placed at a distance \(12c\) from the airfoil, and the radial
stretching factor used in the initial parabolic grid was \(1.08\). The
freestream conditions were \(M_\infty=0.7\) and \(\alpha=0^\circ\), so that
the target configuration corresponds to a symmetric reference case.

For each design iterate, the elliptic mesh equations and the full-potential
flow equation were solved to residual tolerances \(10^{-8}\). The corresponding
mesh and flow adjoint systems were solved more tightly, with tolerance
\(10^{-10}\), in order to reduce the contamination of the design gradient by
linear-solver error. The CST design variables were updated by a classical
fixed-step gradient method with step size \(2\times 10^{-3}\). The optimization
was terminated after at most \(1000\) gradient iterations, with a gradient-norm
tolerance of \(10^{-4}\).

\begin{figure}[htbp]
\centering
\begin{minipage}{0.47\linewidth}
\centering
\includegraphics[width=0.95\linewidth]{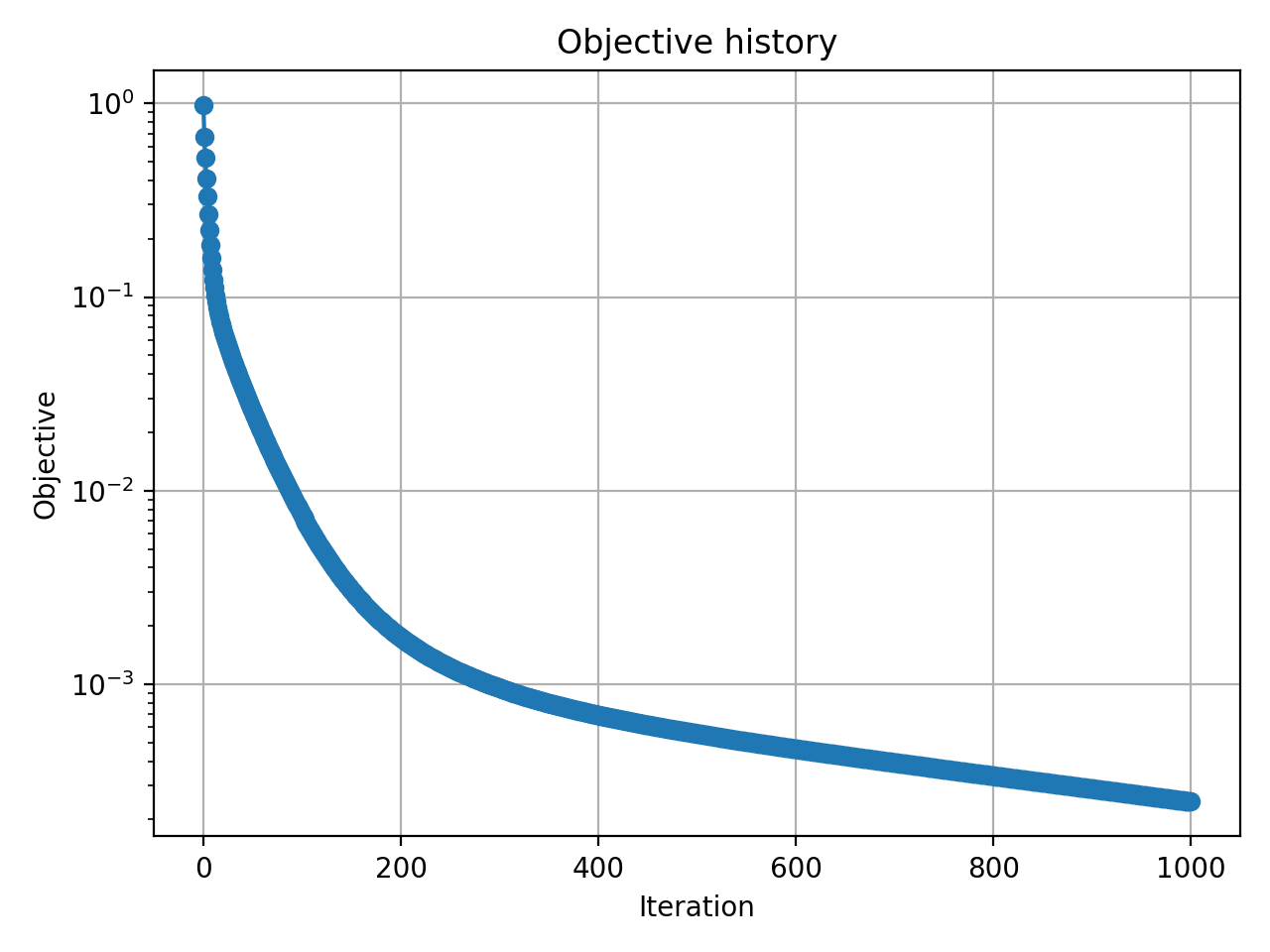}
\subcaption{Objective history.}
\end{minipage}\hfill
\begin{minipage}{0.47\linewidth}
\centering
\includegraphics[width=0.95\linewidth]{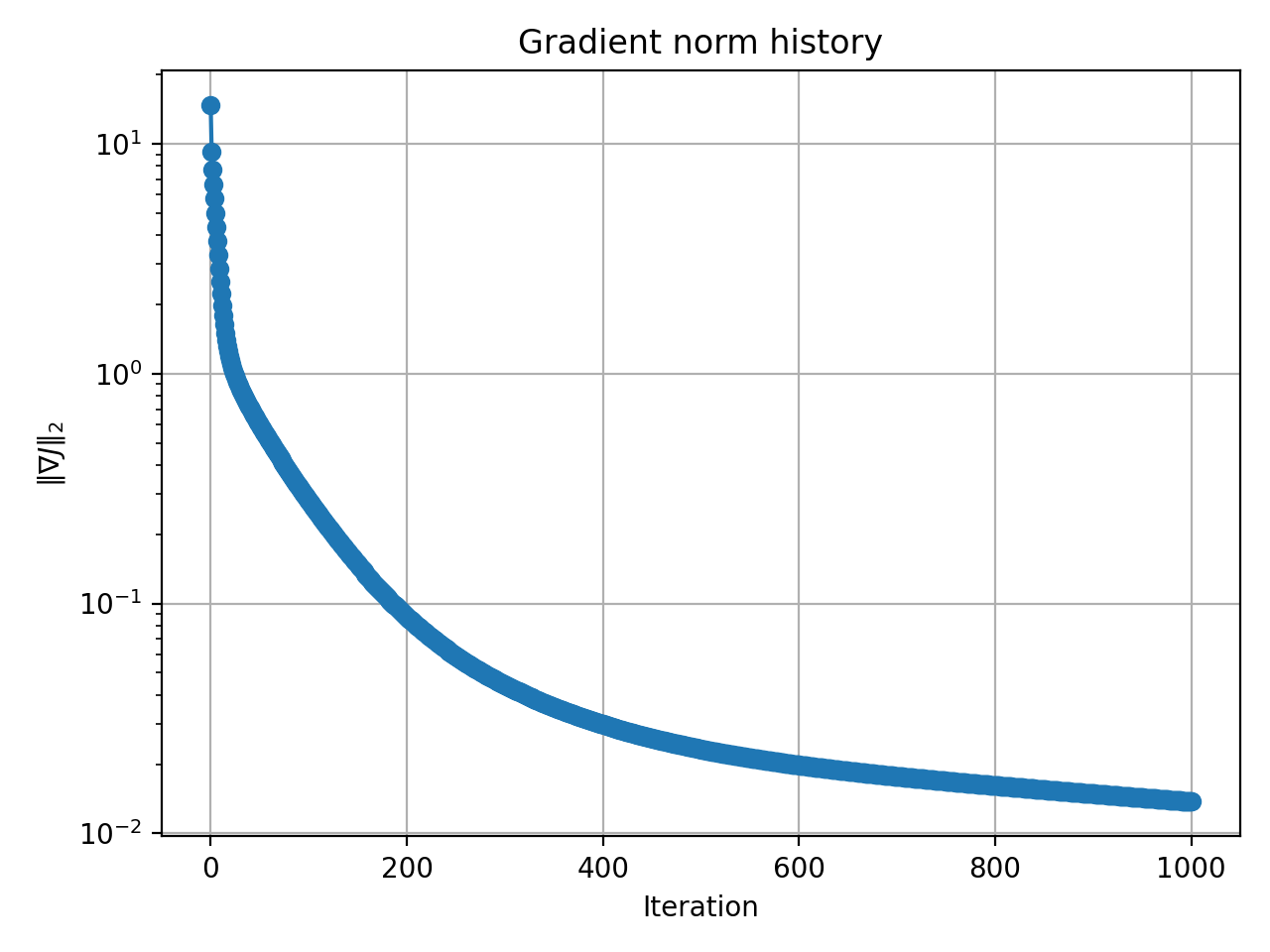}
\subcaption{Gradient-norm history.}
\end{minipage}

\vspace{0.8em}
\begin{minipage}{0.47\linewidth}
\centering
\includegraphics[width=0.95\linewidth]{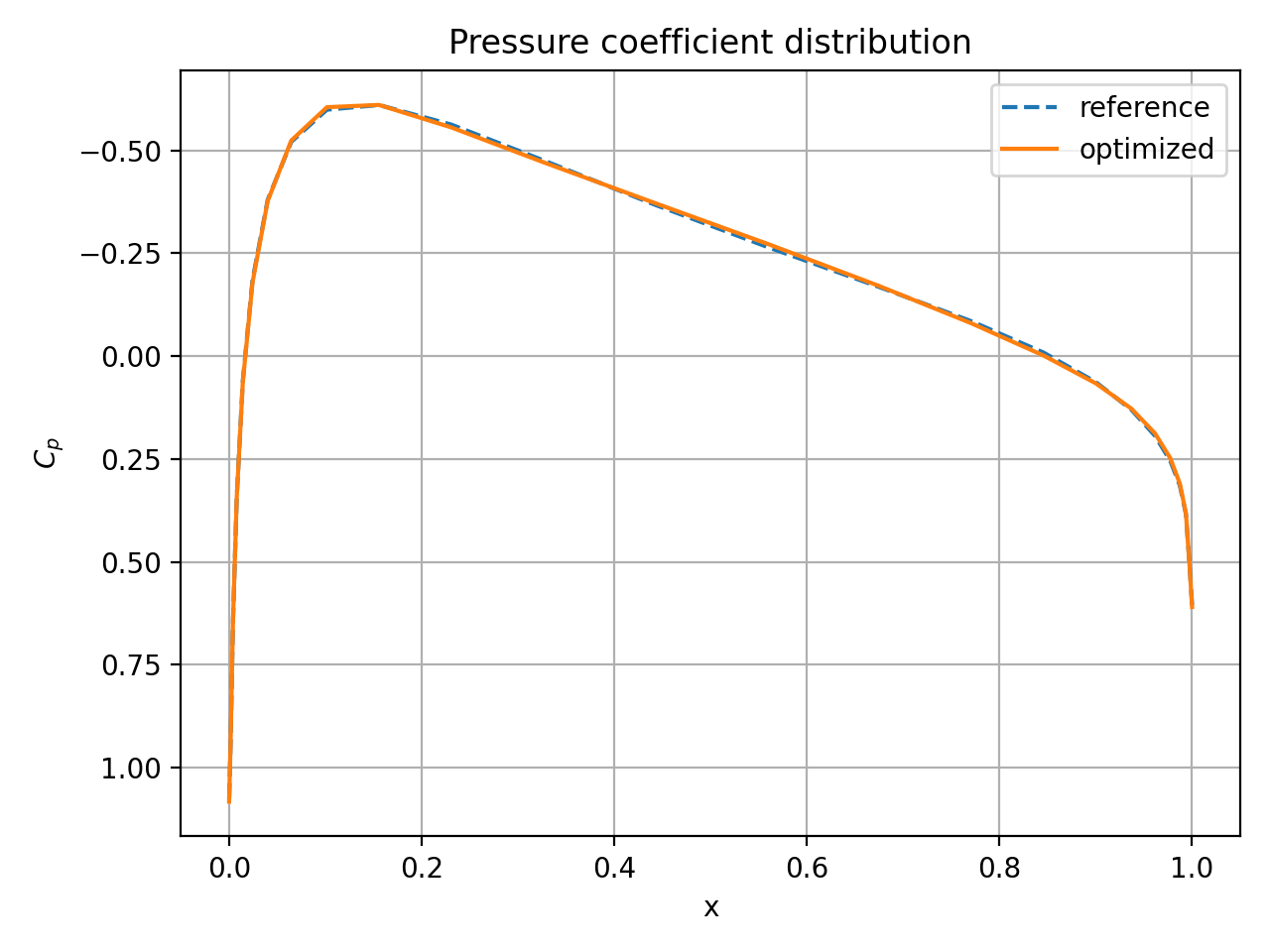}
\subcaption{Pressure matching between the reference and optimized airfoils.}
\end{minipage}\hfill
\begin{minipage}{0.47\linewidth}
\centering
\includegraphics[width=0.95\linewidth]{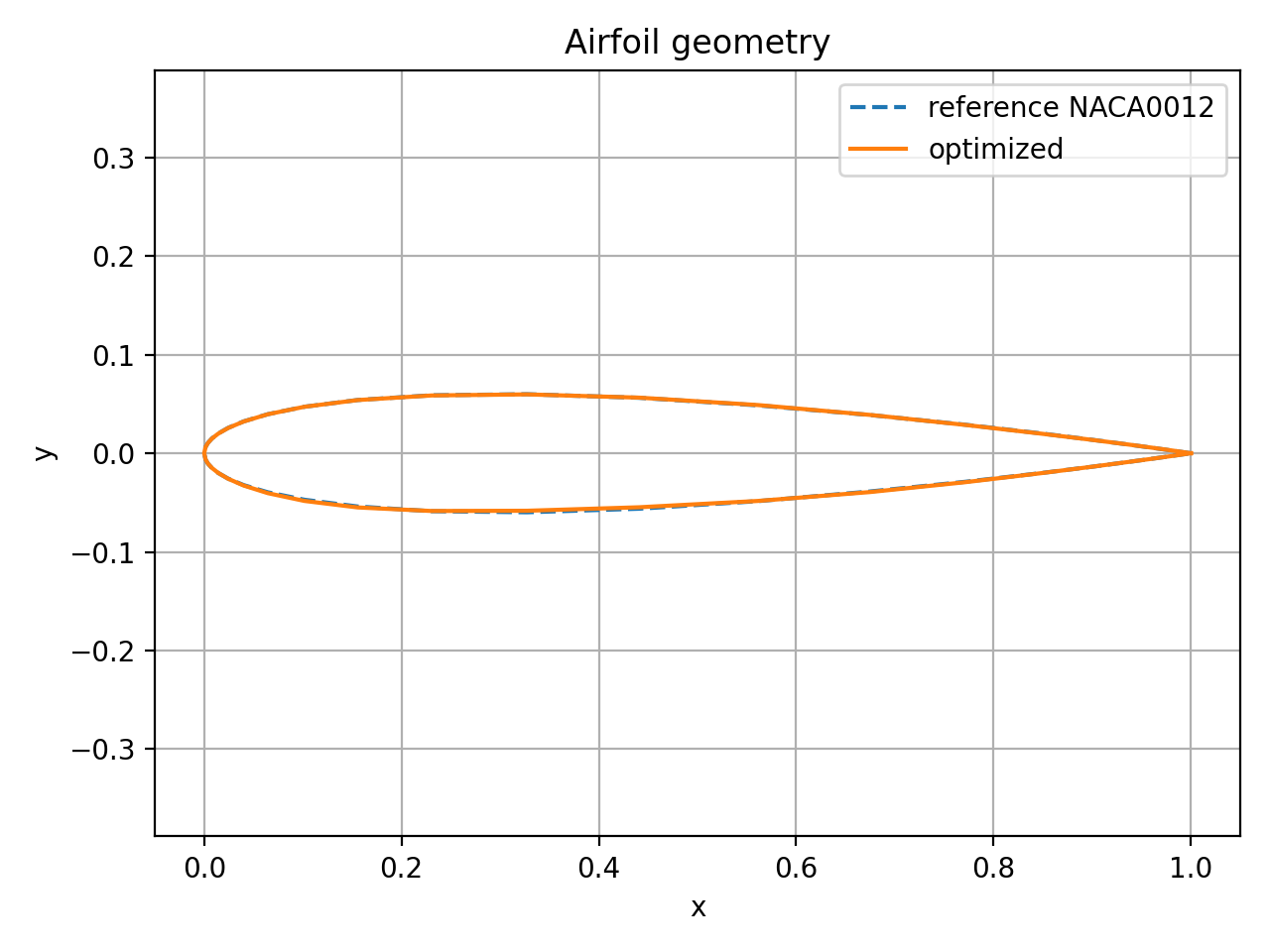}
\subcaption{Reference NACA0012 airfoil and final optimized geometry.}
\end{minipage}
\caption{Airfoil-optimization results: objective history, gradient-norm history, pressure-coefficient comparison, and final optimized airfoil geometry.}
\label{fig:optimization_results}
\end{figure}

\section{Conclusion}
\label{sec:conclusion}

This paper assembled a coherent framework for full-potential airfoil optimization with inexact adjoint gradients. The computational part described the parabolic initial mesh, elliptic mesh smoothing, conservative full-potential discretization, artificial-density stabilization, and AF2 iteration used to generate the aerodynamic state. The optimization part formulated a CST pressure-matching problem in which the computed surface pressure coefficient is driven toward reference data, subject to mesh-generation and full-potential residual constraints.

The theoretical part combined a discrete-adjoint error analysis with an inexact descent framework. For affine state residuals, state and adjoint residual tolerances were shown to induce a reduced-gradient error bounded linearly by the prescribed tolerances. On compact sets of decision variables, this pointwise estimate becomes uniform. Consequently, choosing state and adjoint tolerances proportional to the norm of the inexact adjoint gradient guarantees the directional relative-error condition required by the optimization method.

Under this directional error model, inexact general directions satisfy exact descent inequalities with modified constants. This conversion is the key mechanism that removes the dependence on restrictive step-size intervals tied to unknown absolute errors. In the airfoil setting, it provides a direct rationale for coupling flow, mesh, and adjoint stopping tolerances to the optimization progress. The computational figures included in the text report the current solver outputs and the fixed-step CST gradient-optimization histories.

\bibliographystyle{siamplain}

\end{document}